\title[Isomorphism problem for even Artin groups]{On the isomorphism problem for even Artin groups}
\author[R Blasco-Garc\'ia]{Rub\'en Blasco-Garc\'ia}
\address{Rub\'en Blasco-Garc\'ia, Departamento de Matem\'aticas, IUMA, Universidad de Zaragoza, C. Pedro Cerbuna 12, 50009 Zaragoza, Spain}
\email{rubenb@unizar.es}	
\author[L Paris]{Luis Paris}
\address{Luis Paris, IMB, UMR 5584, CNRS, Univ. Bourgogne Franche-Comté, 21000 Dijon, France}
\email{lparis@u-bourgogne.fr}
\newtheorem{thm}{Theorem}[section]
\newtheorem{lem}[thm]{Lemma}
\newtheorem{prop}[thm]{Proposition}
\newtheorem{corl}[thm]{Corollary}
\theoremstyle{definition}
\newtheorem*{acknow}{Acknowledgments}
\numberwithin{equation}{section}
\renewcommand{\thefigure}{\ifnum \c@section>\z@ \thesection.\fi
 \@arabic\c@figure}
\begin{document}

\def\N{\mathbb N} \def\MM{\mathcal M} \def\AA{\mathcal A}
\def\Z{\mathbb Z} \def\Gr{{\mathcal Gr}} \def\TGr{{\mathcal{TG}r}}
\def\EE{\mathcal E} \def\mod{{\rm mod\,}} \def\UU{\mathcal U}
\def\lk{{\rm lk}} \def\st{{\rm st}} \def\lb{{\rm lb}}
\def\K{\mathbb K} \def\supp{{\rm supp}} \def\F{\mathbb F}
\def\rk{{\rm rk}} \def\VV{\mathcal V} \def\id{{\rm id}}


\begin{abstract}
An even Artin group is a group which has a presentation with relations of the form $(st)^n=(ts)^n$ with $n\ge 1$.
With a group $G$ we associate a Lie $\Z$-algebra $\TGr(G)$.
This is the usual Lie algebra defined from the lower central series, truncated at the third rank. 
For each even Artin group $G$ we determine a presentation for $\TGr(G)$. 
Then we prove a criterion to determine whether two Coxeter matrices are isomorphic.
Let $c,d\in\N$ such that $c\ge1$, $d\ge2$ and $\gcd(c,d)=1$.
We show that, if two even Artin groups $G$ and $G'$ having presentations with relations of the form $(st)^n=(ts)^n$ with $n\in\{c\}\cup\{d^k\mid k\ge1\}$ are such that $\TGr(G)\simeq\TGr(G')$, then $G$ and $G'$ have the same presentation up to permutation of the generators.
On the other hand, we show an example of two non-isomorphic even Artin groups $G$ and $G'$ such that $\TGr(G)\simeq\TGr(G')$.
\end{abstract}

\maketitle


\section{Introduction}\label{sec1}

Let $S$ be a finite set.
A \emph{Coxeter matrix} over $S$ is a square matrix $M = (m_{s,t})_{s,t \in S}$ indexed by the elements of $S$, with coefficients in $\N \cup \{ \infty \}$, satisfying $m_{s,s} = 1$ for all $s \in S$, and $m_{s,t} = m_{t,s} \ge 2$ for all $s,t \in S$, $s \neq t$.
For $s,t \in S$ and $m$ an integer greater or equal to $2$ we denote by $\Pi (s,t, m)$ the word $sts \cdots$ of length $m$.
In other words, $\Pi (s,t,m) = (st)^{\frac{m}{2}}$ if $m$ is even and $\Pi (s,t,m) = (st)^{\frac{m-1}{2}}s$ if $m$ is odd. 
Let $M = (m_{s,t})_{s,t \in S}$ be a Coxeter matrix. 
The \emph{Artin group} associated with $M$ is the group $A[M]$ defined by the presentation 
\[
A[M] = \langle S \mid \Pi (s,t, m_{s,t}) = \Pi (t,s, m_{s,t}) \text{ for } s,t \in S,\ s \neq t,\ m_{s,t} \neq \infty \rangle\,.
\]
The \emph{Coxeter group} $W[M]$ associated with $M$ is the quotient of $A[M]$ by the relations $s^2=1$, $s \in S$.
We say that $A[M]$ (or $M$) is of \emph{spherical type} if $W[M]$ is finite, and we say that $A[M]$ (or $M$) is \emph{right-angled} if $m_{s,t} \in \{2, \infty\}$ for all $s,t \in S$, $s \neq t$.
On the other hand, we say that $A[M]$ (or $M$) is \emph{even} if $m_{s,t}$ is even for all $s,t \in S$, $s \neq t$.

There are few results proven for all Artin groups and the theory consists essentially on the study of more or less extended families.
Basic questions such as the existence of torsion or the existence of a solution to the word problem are still open for all these groups.
The two most studied and best understood families of Artin groups are the one of Artin groups of spherical type and the one of right-angled Artin groups.
The family of even Artin groups has not been so much studied (see, however, Blasco-Garc\'ia--Cogolludo-Agust\'in \cite{BlaCog1}, Blasco-Garc\'ia--Juh\'asz--Paris \cite{BlJuPa1} and Blasco-Garc\'ia--Mart\'inez-P\'erez--Paris \cite{BlMaPa1}), but we are convinced by its interest, in particular because even Artin groups share several interesting properties with right-angled Artin groups.
For instance, any even Artin group retracts into any of its parabolic subgroups.

Let $\MM$ be a family of Coxeter matrices, and let $\AA[\MM]=\{A[M]\mid M\in\MM\}$ be the corresponding family of Artin groups. 
A solution to the \emph{isomorphism problem} in $\AA[\MM]$ is an algorithm which, given two Coxeter matrices $M,N\in\MM$, determines whether $A[M]$ and $A[N]$ are isomorphic or not. 
On the other hand, we say that the family $\AA[\MM]$ is \emph{rigid} if, given two Coxeter matrices $M,N\in\MM$, we have $A[M]\simeq A[N]$ if and only if $M\simeq N$.
Note that any rigid family of Artin groups has an obvious solution to the isomorphism problem.
Note also that these two definitions can be extended to Coxeter groups and, more generally, to any object defined from a Coxeter matrix.

We know that the family of right-angled Artin groups is rigid (see Droms \cite{Droms1}), and we know that the family of Artin groups of spherical type is rigid (see Paris \cite{Paris1}).
On the other hand, examples of non-isomorphic Coxeter matrices having isomorphic associated Artin groups can be found in Brady--McCammond--M\"uhlherr--Neumann \cite{BrMcMuNe1}.
Apart from these works, there are very few significant results on the isomorphism problem for Artin groups.

We know by Bahls \cite[Theorem 5.4]{Bahls1} that the family of even Coxeter groups is rigid.
Furthermore, we do not know any example of two Coxeter matrices $M$ and $N$ such that $A[M]$ is isomorphic to $A[N]$ but $W[M]$ is not isomorphic to $W[N]$.
So, for this reason we conjecture that the family of even Artin groups is rigid.
This is the question that motivated our study.
Note that Bahls' \cite{Bahls1} ideas cannot be extended to the study of even Artin groups because they strongly use the study of finite subgroups and we know by Charney \cite{Charn1} and Charney--Davis \cite{ChaDav1} that even Artin groups have finite cohomological dimension, hence they are torsion free.
Our approach here is inspired by the work of Droms \cite{Droms1} and Kim--Makar-Limanov--Neggers \cite{KiMaNeRo1} on right-angled Artin groups and graph algebras.

Let $G$ be a discrete group and let $\{\gamma_kG\}_{k=1}^\infty$ be its lower central series.
One can associate to $G$ a graded Lie $\Z$-algebra $\Gr(G) = \oplus_{k=1}^\infty \Gr_k(G)$, where $\Gr_k(G)=\frac{\gamma_k G}{\gamma_{k+1} G}$ for all $k$ (see Lazard \cite{Lazar1}, Magnus--Karrass--Solitar \cite{MaKaSo1}, or Bass--Lubotzky \cite{BasLub1}).
In the present work we use the Lie algebra $\Gr(G)$ truncated at the third rank, that is, $\TGr(G) = \Gr(G)/I$, where $I=\oplus_{k=3}^\infty \Gr_k(G)$.
Using a variant of Droms' \cite{Droms1} proof based on Kim--Makar-Limanov--Neggers \cite{KiMaNeRo1} of the rigidity of the family of right-angled Artin groups (see also Koberda \cite[Theorem 6.4]{Kober1}), we show that, if $M$ and $N$ are two right-angled Coxeter matrices and $\TGr(A[M]) \simeq \TGr (A[N])$, then $M \simeq N$ (see Corollary \ref{corl4_2}).
This obviously implies that, if $A[M]$ is isomorphic to $A[N]$, then $M\simeq N$.
So, this shows (again) that the family of right-angled Artin groups is rigid.
This is the point of view we intend to extend.

In this paper we show that the Lie algebras $\TGr(G)$ differentiate some even Artin groups but not all of them.
In Section \ref{sec2} we determine a presentation for $\TGr(A[M])$, where $M$ is any even Coxeter matrix (see Theorem \ref{thm2_1}).
For $c,d\in\N$ such that $c\ge 1$, $d \ge 2$ and $\gcd (c,d)=1$, we denote by $\EE(c,d)$ the set of even Coxeter matrices $M=(m_{s,t})_{s,t\in S}$ satisfying $m_{s,t} \in \{2c, \infty\} \cup \{2d^r \mid r \ge 1\}$ for all $s,t \in S$, $s \neq t$.
In Section \ref{sec5} we prove that the family $\AA [\EE(c,d)]$ is rigid (see Corollary \ref{corl5_13}), and in Section \ref{sec6} we show two even Coxeter matrices $M_0$ and $N_0$ such that $A[M_0] \not\simeq A[N_0]$ and $\TGr (A[M_0]) \simeq \TGr (A[N_0])$. 
The proof of Corollary \ref{corl5_13} uses a criterion proved in Section \ref{sec3} to determine whether two Coxeter matrices are isomorphic, as well as a study made in Section \ref{sec4} of some graded Lie algebras associated to right-angled Coxeter matrices.
These two sections may interest some readers regardless of the rest of the paper. 

\begin{acknow}
The authors would like to thank Conchita Mart\'inez P\'erez for her comments about the lower central series in even Artin groups which were the seed of this work.
The first author was partially supported by the Departamento de Industria e Innovaci\'on del Gobierno de Arag\'on and Fondo Social Europeo PhD grant, the Spanish Government MTM2015-67781-P (MINECO/FEDER) and MTM2016-76868-C2-2-P and Grupo Algebra y Geometr\'ia from Gobierno de Arag\'on. 
He was also supported by the grant CB 2/18 from Programa CAI-Ibercaja de Estancias de Investigaci\'on sponsored by Universidad de Zaragoza, Fundaci\'on Bancaria Ibercaja and Fundaci\'on CAI.
\end{acknow}


\section{Truncated Lie algebra associated to the lower central series of an even Artin group}\label{sec2}

Let $G$ be a group.
For $\alpha, \beta \in G$ we denote by $[\alpha, \beta] = \alpha^{-1} \beta^{-1} \alpha \beta$ the commutator of $\alpha$ and $\beta$.
If $H_1$ and $H_2$ are two subgroups of $G$, then we denote by $[H_1, H_2]$ the subgroup of $G$ generated by $\{ [\alpha, \beta] \mid \alpha \in H_1,\ \beta \in H_2\}$.
The \emph{lower central series} of $G$ is the sequence $\{ \gamma_k G \}_{k=1}^\infty$ of subgroups of $G$ recursively defined by $\gamma_1 G = G$ and $\gamma_k G = [G,\gamma_{k-1} G]$ for $k \ge 2$.
For $k \ge 1$ the quotient $\Gr_k(G) = \frac{\gamma_k G}{\gamma_{k+1} G}$ is an abelian group, so it is a $\Z$-module.
We set $\Gr(G) = \oplus_{k=1}^\infty \Gr_k (G)$.
For $\alpha \in \gamma_k G$, we denote by $\bar \alpha$ the element of $\Gr_k (G)$ represented by $\alpha$.
Then $\Gr (G)$ is endowed with a structure of graded Lie $\Z$-algebra as follows. 
If $\alpha \in \gamma_k G$ and $\beta \in \gamma_\ell G$, then the bracket $[\bar \alpha, \bar \beta]$ is the element of $\Gr_{k + \ell} (G)$ represented by the commutator $[\alpha, \beta]$.
We refer to Lazard \cite{Lazar1}, Magnus--Karrass--Solitar \cite{MaKaSo1}, and Bass--Lubotzky \cite{BasLub1} for detailed accounts on this Lie algebra.

We denote by $\TGr(G) = \Gr(G)/I$ the quotient of $\Gr (G)$ by the ideal $I= \oplus_{k=3}^\infty \Gr_k (G)$.
We have $\TGr(G) =_\Z \Gr_1 (G) \oplus \Gr_2 (G)$, and the Lie bracket on $\TGr(G)$ is defined as follows.
If $\alpha \in \gamma_k G$ and $\beta \in \gamma_\ell G$, $1 \le k,\ell \le 2$, then $[\bar \alpha, \bar \beta]$ is the element of $\Gr_{k+\ell} (G)$ represented by $[\alpha, \beta]$ if $k=\ell=1$, and $[\bar \alpha, \bar \beta]=0$ otherwise.
It is clear that, if $G$ is isomorphic to $G'$, then $\TGr(G)$ is isomorphic to $\TGr(G')$.

Now, take an even Coxeter matrix $M = (m_{s,t})_{s,t \in S}$ and fix a total order $<$ on $S$.
For $m \in \N \cup \{\infty\}$ we set $E_m(M)=\{ (s,t) \in S \times S \mid s < t \text{ and } m_{s,t} = m\}$.
We set
\begin{gather*}
L_2[M] = \left( \bigoplus_{2 \le n < \infty} \left( \bigoplus_{(s,t) \in E_{2n}(M)} (\Z/n\Z)\, v_{s,t} \right) \right) \oplus \left( \bigoplus_{(s,t) \in E_\infty(M)} \Z\, v_{s,t}\right)\,,\\ 
L_1[M] = \bigoplus_{s \in S} \Z u_s\,,\
L[M] = L_1 [M] \oplus L_2[M]\,.
\end{gather*} 
We define a Lie bracket $[\cdot,\cdot]$ on $L[M]$ as follows. 
Let $s,t \in S$, $s \neq t$.
Then $[u_s,u_t]=-[u_t,u_s] = v_{s,t}$ if $s<t$ and $m_{s,t} \neq 2$, and $[u_s,u_t] = 0$ if $m_{s,t}=2$.
On the other hand, we set $[\alpha,\beta] = 0$ for each $(\alpha,\beta) \in L[M] \times L_2 [M]$.
It is clear that $L[M]$ is a well-defined graded Lie $\Z$-algebra.

\begin{thm}\label{thm2_1}
Let $M$ be an even Coxeter matrix.
Then $\TGr (A[M])$ is isomorphic to $L[M]$.
\end{thm}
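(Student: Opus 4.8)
The plan is to compute $\TGr(A[M])$ directly from the presentation of $A[M]$ and then identify it with $L[M]$. First I would recall that for a group $G$ given by a presentation $\langle S \mid R \rangle$, the abelianization $\Gr_1(G) = G/[G,G]$ is the abelianization of the free group $F(S)$ modulo the images of the relators; since every even Artin relator $\Pi(s,t,2n)=\Pi(t,s,2n)$, i.e. $(st)^n(ts)^{-n}$, lies in $[F(S),F(S)]$, it becomes trivial in the abelianization, so $\Gr_1(A[M]) = \bigoplus_{s\in S}\Z u_s$, which is exactly $L_1[M]$. This already matches the degree-one part. The substantive content is the degree-two part: I must show $\Gr_2(A[M]) = \frac{\gamma_2 A[M]}{\gamma_3 A[M]}$ is isomorphic, as a $\Z$-module and compatibly with the bracket from $\Gr_1$, to $L_2[M]$.

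For the second graded piece I would use the standard description of $\Gr_2$ for a finitely presented group (see Magnus–Karrass–Solitar \cite{MaKaSo1}). For the free group $F=F(S)$ one has $\Gr_2(F) = \wedge^2 H$ where $H = \Gr_1(F) = \bigoplus_s \Z u_s$, with the bracket $[u_s,u_t] = u_s\wedge u_t$. Passing to $G=A[M]$ kills, inside $\frac{\gamma_2 F}{\gamma_3 F}$, the classes of the relators: each relator $r_{s,t} = (st)^n(ts)^{-n} \in \gamma_2 F$ has image in $\Gr_2(F) = \wedge^2 H$, and a short computation (expanding $(st)^n(ts)^{-n}$ modulo $\gamma_3 F$, where commutators of commutators vanish) gives that this image is $2n\, (u_s\wedge u_t)$ — the $n$ copies of $[s,t]=s^{-1}t^{-1}st$ hidden in $(st)^n$ versus $(ts)^n$ contribute with a factor counting how the syllables cross. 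Hence $\Gr_2(A[M])$ is the quotient of $\wedge^2 H$ by the subgroup generated by $\{\, 2n\,(u_s\wedge u_t) \mid (s,t)\in E_{2n}(M),\ n<\infty\,\}$, together with the vanishing forced when $m_{s,t}=2$ (there the relator is $sts^{-1}t^{-1}$ itself, forcing $u_s\wedge u_t \mapsto 0$, i.e. the ``$n=1$'' instance). In $\wedge^2 H$ the summand $\Z\,(u_s\wedge u_t)$ for $s<t$ therefore survives as $(\Z/n\Z)\,v_{s,t}$ when $m_{s,t}=2n$ with $n\ge 2$, as $0$ when $m_{s,t}=2$, and as $\Z\,v_{s,t}$ when $m_{s,t}=\infty$; summing over all pairs this is precisely $L_2[M]$, with the identification $u_s\wedge u_t \leftrightarrow v_{s,t}$ matching the bracket $[u_s,u_t]=v_{s,t}$ prescribed in $L[M]$. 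Since the bracket on $\TGr$ of anything into the degree-two part is zero by definition, and the same holds in $L[M]$, the identification is an isomorphism of graded Lie $\Z$-algebras.

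The one point needing genuine care — and the main obstacle — is the claim that the image in $\Gr_2(F)$ of the relator $(st)^n(ts)^{-n}$ is exactly $2n\,(u_s\wedge u_t)$, rather than, say, $n$ or some other multiple; getting this coefficient right is what determines whether the torsion is $\Z/n\Z$ or $\Z/2n\Z$, so it must be pinned down precisely (for instance by the collection process, rewriting $(st)^n(ts)^{-n}$ as a product of basic commutators modulo $\gamma_3 F$, or equivalently by the Magnus embedding and counting the coefficient of the degree-two monomial). A secondary subtlety is verifying that no \emph{additional} relations appear in $\Gr_2$ beyond those coming from the defining relators — i.e. that the natural surjection from the presentation-derived description onto $\Gr_2(A[M])$ has no kernel; this follows from the general fact that for a group presentation the second lower-central quotient depends only on $\Gr_1$ and the images of the relators in $\wedge^2\Gr_1$, which I would invoke from \cite{MaKaSo1} or \cite{Lazar1}. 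The rest is bookkeeping: assembling the $u_s$ and $v_{s,t}$ into the stated direct sums and checking the bracket relations hold on the nose.
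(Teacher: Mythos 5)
Your overall route is genuinely different from the paper's: you want to compute $\Gr_2(A[M])$ directly from the presentation, as $\Gr_2(F)\cong\wedge^2 H$ (with $F=F(S)$, $H=\bigoplus_s\Z u_s$) modulo the images of the defining relators, whereas the paper first produces a surjection $f:L[M]\to\TGr(A[M])$ (using that the relation $(st)^n=(ts)^n$ forces $n\,[\bar s,\bar t]=0$) and then proves injectivity by sending $A[M]$ into the group of units $1+w$ of a truncated quotient $R$ of the free associative algebra via $s\mapsto 1+x_s$, checking on $R_1\oplus R_2$ that nothing collapses. Your scheme can in principle be carried out: since every even relator $(st)^n(ts)^{-n}$ lies in $[F,F]$, a conjugate $f r f^{-1}$ differs from $r$ by an element of $[\gamma_2F,F]=\gamma_3F$, so the image of $N\cap\gamma_2F$ in $\Gr_2(F)$ is indeed generated by the relator images; this is the content of your ``no additional relations'' step, and it deserves this one-line justification rather than a bare citation (the paper's representation into $R$ is precisely how it sidesteps this point).

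The genuine gap is the coefficient, which you yourself identify as ``the main obstacle'' and then do not prove -- and the value you assert is wrong. The image of $(st)^n(ts)^{-n}$ in $\Gr_2(F)$ is $n\,(u_s\wedge u_t)$, not $2n\,(u_s\wedge u_t)$: this is exactly the paper's Claim 1, $(ts)^{-n}(st)^n\equiv[s,t]^n \pmod{\gamma_3}$, proved by a short induction on $n$. The case $n=1$ already refutes $2n$: the relator $sts^{-1}t^{-1}$ is itself a commutator, with image $u_s\wedge u_t$, which is what your own parenthetical remark about $m_{s,t}=2$ uses, in contradiction with the stated general formula. Moreover your deduction is internally inconsistent: killing $2n\,(u_s\wedge u_t)$ in $\Z\,(u_s\wedge u_t)$ yields $\Z/2n\Z$, not the $\Z/n\Z$ you then write down; with the coefficient $2n$ the conclusion would not be $L_2[M]$ (for instance $m_{s,t}=2$ would leave a spurious $\Z/2\Z$), so the theorem would fail. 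Thus the crucial computation is both unestablished and misstated; replacing it by the induction giving $[s,t]^n$ (and adding the justification above for the degree-two presentation of $\Gr_2$) repairs the argument and makes it a legitimate alternative to the paper's proof.
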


\begin{proof}
{\it Claim 1.}
Let $G$ be a group, let $\alpha, \beta \in G$, and let $n \in \N$, $n \ge 1$.
Then 
\[
(\beta \alpha)^{-n} (\alpha \beta)^n \equiv [\alpha,\beta]^n\ (\mod \gamma_3G)\,.
\]

{\it Proof of Claim 1.}
We argue by induction on $n$.
If $n=1$, then $(\beta \alpha)^{-1} (\alpha \beta) = [\alpha, \beta]$ and there is nothing to prove.
We suppose that $n \ge 2$ and that the inductive hypothesis holds.
Then
\[
(\beta \alpha)^{-n} (\alpha \beta)^n \equiv
\alpha^{-1} \beta^{-1} [\alpha, \beta]^{n-1} \alpha \beta \equiv
[\alpha, \beta]^{n-1} \alpha^{-1} \beta^{-1} \alpha \beta \equiv
[\alpha, \beta]^n\ (\mod \gamma_3G)\,.
\]
This concludes the proof of Claim 1.

{\it Claim 2.}
There is a surjective homomorphism $f : L[M] \to \TGr (A[M])$ which sends $u_s$ to $\bar s$ for all $s \in S$ and sends $v_{s,t}$ to $[\bar s, \bar t]$ for all $s,t \in S$ such that $s < t$ and $m_{s,t} \neq 2$.

{\it Proof of Claim 2.}
We know by Magnus--Karrass--Solitar \cite[Theorem 5.4]{MaKaSo1} that $\TGr_1 (A[M])$ is generated as a $\Z$-module by $\{ \bar s \mid s \in S\}$ and $\TGr_2 (A[M])$ is generated as a $\Z$-module by $\{ [\bar s, \bar t] \mid s,t \in S,\ s<t \}$.
Let $s,t \in S$, $s <t$.
If $m_{s,t}=2$, then $st = ts$, hence $[\bar s, \bar t]=0$.
If $m_{s,t}=2n$, where $2 \le n < \infty$, then $(st)^n = \Pi(s,t,m_{s,t}) = \Pi(t,s,m_{s,t}) = (ts)^n$, hence, by Claim 1, $n\,[\bar s, \bar t] = 0$.
So, there exists a well-defined homomorphism $f : L[M] \to \TGr (A[M])$ which sends $u_s$ to $\bar s$ for all $s \in S$ and sends $v_{s,t}$ to $[\bar s, \bar t]$ for all $s,t \in S$ such that $s < t$ and $m_{s,t} \neq 2$, and this homomorphism is surjective. 
This concludes the proof of Claim 2.

Let $\Z \langle X \rangle$ be the non-commutative associative free algebra over a set $X = \{x_s \mid s \in S\}$ in one-to-one correspondence with $S$.
Let $\hat I$ be the ideal of $\Z \langle X \rangle$ generated by $X$, and let $\hat R = \Z\langle X \rangle/\hat I^3$.
Let $J$ be the ideal of $\hat R$ generated by $\cup_{1 \le n < \infty} \{n\,(x_s x_t - x_t x_s) \mid (s,t) \in E_{2n}(M) \}$, and let $R = \hat R/J$.
Observe that $R$ is a graded associative $\Z$-algebra, $R = R_0 \oplus R_1 \oplus R_2$, where $R_0= \Z$, $R_1= \oplus_{s \in S} \Z x_s$, and 
\begin{gather*}
R_2 = \left( \bigoplus_{s \in S} \Z x_s^2 \right) \oplus \left( \bigoplus_{s<t} \Z x_t x_s \right) \oplus \left( \bigoplus_{2\le n <\infty} \left( \bigoplus_{(s,t) \in E_{2n} (M)} (\Z/n\Z)(x_sx_t - x_tx_s) \right) \right)\\
 \oplus \left( \bigoplus_{(s,t) \in E_\infty (M)} \Z (x_s x_t - x_t x_s)\right)\,.
\end{gather*}

Let $I = R_1 \oplus R_2$ be the ideal of $R$ generated by $X$.
Set $\UU(R) = \{1 + w \mid w \in I\}$.
Note that $\UU (R)$ is a subgroup of the group of units of $R$; the inverse of an element $1+w$ of $\UU(R)$ is $1 - w + w^2$.

{\it Claim 3.}
Let $u, v \in I$ and $n \in \N$, $n \ge 1$.
Then 
\[
\big((1+u)(1+v)\big)^n = 1+nu+nv + S_{n-1}(u^2 + v^2 +vu) + S_n uv\,,
\]
where $S_n = \sum_{k=1}^n k = \frac{n (n+1)}{2}$.

{\it Proof of Claim 3.}
We argue by induction on $n$. 
The case $n=1$ being trivial, we may assume that $n \ge 2$ and that the inductive hypothesis holds.
Then 
\begin{gather*}
\big((1+u)(1+v)\big)^n \\
 = \big( 1+(n-1)u+(n-1)v + S_{n-2}(u^2 + v^2 +vu) + S_{n-1} uv \big) (1 + u + v + uv)\\
= 1+(n-1)u+(n-1)v + S_{n-2}(u^2 + v^2 +vu) + S_{n-1} uv + u + (n-1)u^2 + (n-1) vu 
\\ + v + (n-1) uv + (n-1) v^2 + uv
\\ = 1+nu+nv + S_{n-1}(u^2 + v^2 +vu) + S_n uv\,.
\end{gather*}
This concludes the proof of Claim 3.

{\it Claim 4.}
There exists a homomorphism $\tilde g : A[M] \to \UU(R)$ which sends $s$ to $1+x_s$ for all $s \in S$.

{\it Proof of Claim 4.}
Let $s,t \in S$, $s \neq t$, such that $m_{s,t} \neq \infty$.
Set $m_{s,t} = 2n$ where $1 \le n < \infty$.
Then, by Claim 3,
\begin{gather*}
\big((1+x_s)(1+x_t)\big)^n - \big((1+x_t)(1+x_s)\big)^n\\
= \big( 1 + nx_s + nx_t + S_{n-1}(x_s^2 + x_t^2 +x_t x_s) + S_n x_s x_t \big)\\ \qquad - \big( 1 + nx_t + nx_s + S_{n-1}(x_t^2 + x_s^2 +x_s x_t) + S_n x_t x_s  \big)\\
= n(x_s x_t - x_t x_s) = 0\,,
\end{gather*}
hence 
\begin{gather*}
\Pi((1+x_s),(1+x_t),m_{s,t}) = 
\big((1+x_s)(1+x_t)\big)^n\\
= \big((1+x_t)(1+x_s)\big)^n
= \Pi((1+x_t),(1+x_s),m_{s,t})\,.
\end{gather*}
This concludes the proof of Claim 4.

We denote by $g : \TGr(A[M]) \to \TGr (\UU(R))$ the homomorphism induced by $\tilde g$.
For $k \ge 1$ we set $\UU_k(R) =\{1+w \mid w \in I^k\}$.
Note that $\UU_k(R)$ is a subgroup of $\UU (R)$ and $\UU_k(R)=\{1\}$ if $k \ge 3$.

{\it Claim 5.}
\begin{itemize}
\item[(1)]
Let $k \ge 1$.
Then $\UU_k(R)$ is a normal subgroup of $\UU(R)$.
\item[(2)]
Let $k, \ell \ge 1$.
Then $[\UU_k (R), \UU_\ell(R)] \subset \UU_{k+\ell}(R)$.
\end{itemize}

{\it Proof of Claim 5.}
Let $\alpha \in \UU(R)$ and $w \in I^k$.
Then $\alpha (1+w)\alpha^{-1} =1 + \alpha w \alpha^{-1} \in \UU_k(R)$.
So, $\UU_k (R)$ is a normal subgroup of $\UU (R)$.

Let $k, \ell \ge 1$, $u \in I^k$ and $v \in I^\ell$.
Then 
\begin{gather*}
[(1+u),(1+v)] =
(1+u)^{-1} (1+v)^{-1} (1+u) (1+v)\\
= (1 - u - v + u^2 + v^2 +uv) (1 + u + v + uv)\\
= 1 - u - v + u^2 + v^2 +uv + u - u^2 - vu + v - uv - v^2 + uv\\
= 1 +uv - vu \in \UU_{k+\ell}(R)\,.
\end{gather*}
This concludes the proof of Claim 5.

{\it End of the proof of Theorem \ref{thm2_1}.}
By Claim 5 we have $\gamma_k\UU (R) \subset\UU_k(R)$ for all $k \in \N$, and these inclusions induce for each $k$ a homomorphism $\Gr_k(\UU (R)) = \frac{\gamma_k \UU(R)}{\gamma_{k+1} \UU (R)} \to \frac{\UU_k(R)}{\UU_{k+1} (R)}$ of $\Z$-modules.
On the other hand, it is easily seen that $\frac{\UU_k(R)}{\UU_{k+1} (R)}$ is isomorphic to $R_k$ as a $\Z$-module, where $R_k = \{0\}$ if $k \ge 3$.
For $k=1,2$, by composing these homomorphisms we obtain a homomorphism $h_k : \TGr_k (\UU (R)) = \Gr_k (\UU (R)) \to R_k$ of $\Z$-modules.
A direct calculation shows that $(h_1 \circ g \circ f)(u_s) = x_s$ for all $s \in S$, hence $h_1 \circ g \circ f : L_1[M] \to R_1$ is injective, and therefore the restriction of $f$ to $L_1[M]$ is injective. 
Similarly, a direct calculation shows that, for each $s,t \in S$ such that $s < t$ and $m_{s,t} \neq 2$, we have $(h_2 \circ g \circ f)(v_{s,t}) = (x_s x_t - x_t x_s)$, hence $h_2 \circ g \circ f : L_2[M] \to R_2$ is injective, and therefore the restriction of $f$ to $L_2[M]$ is injective.
So, $f$ is an isomorphism.
\end{proof}


\section{Isomorphism between Coxeter matrices}\label{sec3}

Let $M$ and $N$ be two Coxeter matrices.
In the present section we prove a necessary and sufficient condition for $M$ and $N$ to be isomorphic (see Proposition \ref{prop3_4}).
This condition will be used later for even Coxeter matrices but it holds for any pair of Coxeter matrices. 
The case of right-angled Coxeter matrices is well-known and proved in Kim--Makar-Limanov--Neggers--Roush \cite{KiMaNeRo1}, with a proof that inspired ours.

Let $M = (m_{s,t})_{s,t \in S}$ be a Coxeter matrix. 
For $s \in S$ and $m \in \N$, $2 \le m < \infty$, the set $\lk_m(s) = \{ t \in S \setminus \{s\} \mid m_{s,t} \text{ divides } m\}$ is called the \emph{$m$-link} of $s$ and the set $\st_m(s) = \lk_m(s) \cup \{s\}$ is called the \emph{$m$-star} of $s$.
For $s,t \in S$ we set $s \prec_M t$ if $\lk_m(s) \subset \st_m(t)$ for all $m \in \N$, $2 \le m<\infty$.

\begin{lem}\label{lem3_1}
The relation $\prec_M$ is a quasi-order.
\end{lem}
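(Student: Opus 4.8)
We must show that $\prec_M$ is reflexive and transitive. Reflexivity is immediate: for any $s \in S$ and any $m$ with $2 \le m < \infty$ we have $\lk_m(s) \subset \lk_m(s) \cup \{s\} = \st_m(s)$, so $s \prec_M s$. The content of the lemma is therefore transitivity.

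For transitivity, suppose $s \prec_M t$ and $t \prec_M r$; I want to prove $s \prec_M r$, i.e. $\lk_m(s) \subset \st_m(r)$ for every $m \in \N$ with $2 \le m < \infty$. Fix such an $m$ and pick an arbitrary $p \in \lk_m(s)$. I plan to split into the two cases $p = t$ and $p \neq t$. If $p = t$, then $t \in \lk_m(s) \subset \st_m(t) \cap \dots$; more directly, $p = t \in \lk_m(s)$ means $m_{s,t}$ divides $m$, and then I need $t \in \st_m(r)$. This follows because $t \prec_M r$ forces $\lk_m(t) \subset \st_m(r)$, but I actually need to argue $t \in \st_m(r)$ itself: either $t = r$ (done) or $s \in \lk_m(t)$ as well — since $m_{s,t} = m_{t,s}$ divides $m$ — and then $s \prec_M \dots$ is the wrong direction, so instead I use $\lk_m(t) \subset \st_m(r)$ together with the symmetry $s \in \lk_m(t) \Leftrightarrow t \in \lk_m(s)$ to get $s \in \st_m(r)$, which is not quite $t \in \st_m(r)$. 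The cleaner route: from $p = t \in \lk_m(s)$ and $s \prec_M t$ applied to $\lk_m(s)$ we get nothing new about $t$; rather one observes $s \in \lk_m(t)$, hence $s \in \st_m(r)$ by $t \prec_M r$; and separately, since $p=t$ was assumed, we still owe $t \in \st_m(r)$, which I get from: if $t \neq r$ then $t$ lies in some $\lk_{m'}(\,\cdot\,)$ — this is getting delicate, so the actual case analysis needs care.

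The key step, and the one I expect to be the main obstacle, is exactly this bookkeeping with the cases. The robust argument is: take $p \in \lk_m(s)$. Since $s \prec_M t$, we have $p \in \st_m(t)$, so either $p = t$ or $p \in \lk_m(t)$. In the second case, $t \prec_M r$ gives $p \in \st_m(r)$, as desired. In the first case $p = t$, so I must show $t \in \st_m(r)$; now $p = t \in \lk_m(s)$ says $m_{s,t} \mid m$, and by symmetry of the Coxeter matrix $s \in \lk_m(t)$, whence $t \prec_M r$ gives $s \in \st_m(r)$, i.e. $s = r$ or $m_{r,s} \mid m$. If $s = r$ then since $m_{s,t} = m_{r,t} \mid m$ and $t \neq s = r$ we get $t \in \lk_m(r) \subset \st_m(r)$. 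If instead $s \neq r$ and $m_{r,s} \mid m$, then — hmm, this still does not directly bound $m_{r,t}$. So the truly clean statement is to handle the case $p=t$ by noting it can only occur when we additionally track whether $t=r$; if $t = r$ then trivially $p = t = r \in \st_m(r)$, and if $t \neq r$ then $t \in \lk_m(t)$? No. I will instead organize it as: $p \in \lk_m(s) \subset \st_m(t)$, so $p \in \st_m(t)$; and $\st_m(t) \subset \st_m(r)$ — the latter because $t \prec_M r$ gives $\lk_m(t) \subset \st_m(r)$, and $t \in \st_m(r)$ since either $t = r$ or $t \in \lk_m(t)$, but $t \notin \lk_m(t)$ by definition, so I separately need $t \prec_M r \Rightarrow t \in \st_m(r)$ for all relevant $m$. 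This last implication is false in general (take $r$ with no relation to $t$, i.e. $m_{t,r} = \infty$, yet $\lk_m(t)$ could be empty so $t \prec_M r$ holds while $t \notin \st_m(r)$ for small $m$). Therefore the inclusion $\st_m(t) \subset \st_m(r)$ is \emph{not} what $t \prec_M r$ gives, and the case $p = t$ genuinely requires the extra input $t \in \lk_m(s)$, i.e. $m_{s,t} \mid m$.

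Here is the plan that works. Fix $m$, take $p \in \lk_m(s)$; then $p \in \st_m(t)$. If $p \neq t$, then $p \in \lk_m(t) \subset \st_m(r)$ and we are done. If $p = t$, then $m_{s,t} = m_{s,p} \mid m$, so $s \in \lk_m(t)$; since $t \prec_M r$, this gives $s \in \st_m(r)$. Now I claim $t \in \st_m(r)$ as well. If $t = r$ this is clear. Otherwise $t \neq r$; combined with $s \in \st_m(r)$ we get $s = r$ or $m_{s,r} \mid m$. If $s = r$: then $m_{t,r} = m_{t,s} \mid m$ and $t \neq r$, so $t \in \lk_m(r)$. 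If $s \neq r$ and $m_{s,r} \mid m$: then $r \in \lk_m(s) \subset \st_m(t)$, so $r = t$ (excluded) or $r \in \lk_m(t)$, i.e. $m_{r,t} \mid m$ with $r \neq t$, hence $t \in \lk_m(r)$. In every case $p = t \in \st_m(r)$. Since $m$ and $p$ were arbitrary, $\lk_m(s) \subset \st_m(r)$ for all $m$, i.e. $s \prec_M r$. Together with reflexivity, $\prec_M$ is a quasi-order. The only subtlety is the case $p = t$, where one must chase the divisibility relations through both hypotheses and both applications of the symmetry $m_{a,b} = m_{b,a}$; everything else is immediate from the definitions.
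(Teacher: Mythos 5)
Your final paragraph (``the plan that works'') is a correct proof and is essentially the paper's own argument: both isolate the one problematic element $t$ of $\lk_m(s)$ (equivalently, the case where $m_{s,t}$ divides $m$), handle the rest via $\lk_m(s)\setminus\{t\}\subset\lk_m(t)\subset\st_m(r)$, and show $t\in\st_m(r)$ by chasing the divisibility conditions through both hypotheses using the symmetry $m_{a,b}=m_{b,a}$, with the paper simply disposing of the degenerate cases $s=t$, $t=r$, $s=r$ at the outset instead of inside the case analysis. The earlier exploratory passages with abandoned attempts should be deleted, as only that last paragraph constitutes the proof.
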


\begin{proof}
The relation $\prec_M$ is clearly reflexive. 
So, we just need to show that $\prec_M$ is transitive.
Let $s,t,r \in S$ such that $s \prec_M t$ and $t \prec_M r$.
Let $m \in \N$ such that $2 \le m < \infty$. 
We have $\lk_m (s) \subset \st_m (t)$ and $\lk_m(t) \subset \st_m(r)$, and we have to show that $\lk_m(s) \subset \st_m(r)$.
If $s=t$ or $t = r$ or $s=r$, then there is nothing to prove, thus we may assume that $s \neq t$, $t \neq r$ and $s \neq r$.
If $m_{s,t}$ does not divide $m$, then $t \not\in \lk_m(s)$, hence $\lk_m(s) \subset \lk_m(t)$, and therefore $\lk_m(s) \subset \st_m(r)$.
So, we can suppose that $m_{s,t}$ divides $m$, that is, $s \in \lk_m(t)$ and $t \in \lk_m(s)$.
Since $\lk_m (t) \subset \st_m(r)$ and $s \neq r$, it follows that $s \in \lk_m(r)$, hence $m_{s,r}$ divides $m$ and $r \in \lk_m(s)$.
Since $\lk_m(s) \subset \st_m(t)$ and $r \neq t$, it follows that $r \in \lk_m(t)$, hence $m_{t,r}$ divides $m$ and $t \in \lk_m(r)$.
So, $\lk_m(s) \setminus\{t\} \subset \lk_m(t) \subset \st_m(r)$ and $t \in \lk_m(r) \subset \st_m(r)$, hence $\lk_m (s) \subset \st_m(r)$.
\end{proof}

Define the equivalence relation $\equiv_M$ on $S$ by  
\[
s \equiv_M t \ \Longleftrightarrow\ s \prec_M t \text{ and } t \prec_M s\,.
\]

\begin{lem}\label{lem3_2}
Let $C$ be an equivalence class of $\equiv_M$ non reduced to a single element. 
Then there exists $m \in \N \cup\{\infty\}$, $m \ge 2$, such that $m_{s,t} = m$ for all $s,t \in C$, $s \neq t$.
\end{lem}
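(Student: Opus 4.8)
The plan is to prove that all off-diagonal entries of $M$ indexed by pairs of elements of $C$ coincide, by reducing everything to the following core assertion: for any three pairwise distinct elements $s,t,r\in C$ one has $m_{s,t}=m_{s,r}$. Granting this, the lemma follows at once. If $|C|=2$ there is nothing to prove. If $|C|\ge 3$, then two off-diagonal pairs in $C$ sharing an element are covered directly by the core assertion, while for two disjoint pairs $\{s_1,t_1\}$ and $\{s_2,t_2\}$ (all four elements in $C$) I would apply the core assertion twice, to the triples $(s_1,t_1,s_2)$ and $(s_2,s_1,t_2)$, obtaining $m_{s_1,t_1}=m_{s_1,s_2}=m_{s_2,t_2}$. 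Setting $m$ to be this common value (which is $\ge 2$ because $M$ is a Coxeter matrix) gives the statement.

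To establish the core assertion, fix pairwise distinct $s,t,r\in C$ and set $a=m_{s,t}$ and $b=m_{s,r}$; note $a,b\ge 2$ since the three elements are distinct. First I would show that \emph{if $a<\infty$ then $b\mid a$}: take $m=a$, which lies in the admissible range $2\le m<\infty$; since $m_{t,s}=a$ divides $a$ and $s\ne t$, we have $s\in\lk_a(t)$, and since $t\equiv_M r$ gives $t\prec_M r$ we get $\lk_a(t)\subset\st_a(r)$, hence $s\in\st_a(r)=\lk_a(r)\cup\{r\}$; as $s\ne r$ this forces $s\in\lk_a(r)$, i.e.\ $m_{s,r}=b$ divides $a$, and in particular $b<\infty$. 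The symmetric argument, now taking $m=b$ and using $r\prec_M t$, shows that \emph{if $b<\infty$ then $a\mid b$}.

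Putting the two halves together finishes the proof: if $a<\infty$ then $b<\infty$ and $b\mid a$, hence also $a\mid b$, so $a=b$; and if $a=\infty$ then $b$ cannot be finite, since otherwise $a\mid b$ would assert that $\infty$ divides a finite integer, so $b=\infty=a$. There is no real obstacle here — the argument is essentially a two-line use of the quasi-order $\prec_M$ — but two book-keeping points deserve attention: the distinctness of $s,t,r$ is used both to guarantee that $m=a\ge 2$ (so that $\lk_a$ is defined) and to pass from $s\in\st_a(r)$ to $s\in\lk_a(r)$; and one should fix once and for all the convention that $\infty$ divides no integer, which is precisely what makes the interface between the finite and the infinite cases clean. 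It is worth noting that the core assertion itself needs only $t\equiv_M r$, the full hypothesis that $C$ is an equivalence class entering merely through the reduction above.
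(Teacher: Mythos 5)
Your proof is correct and follows essentially the same route as the paper: both arguments use $s\in\lk_m(t)$ together with the inclusion $\lk_m\subset\st_m$ coming from $\prec_M$ (plus distinctness to pass from star to link) to get divisibility in both directions, and both settle the infinite case by contradiction via the finite case. The only difference is cosmetic: you isolate the comparison of two pairs sharing an element as a ``core assertion'' and spell out the reduction for disjoint pairs, which the paper leaves implicit.
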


\begin{proof}
Let $s, t \in C$, $s \neq t$, and let $m = m_{s,t}$.
Let $r \in C\setminus\{s,t\}$.
Suppose first that $m \neq \infty$.
We have $t \in \lk_m(s)$ and $\lk_m(s) \subset \st_m(r)$, hence $t \in \st_m(r)$.
Since $t\neq r$, it follows that $t \in \lk_m(r)$, thus $m_{t,r}$ divides $m=m_{s,t}$.
In particular, $m_{t,r} \neq \infty$.
Using the same argument we show that $m=m_{s,t}$ divides $m_{t,r}$, hence $m_{t,r} = m_{s,t}=m$.
We show that $m_{s,r}=m$ in the same way.
Now suppose that $m=\infty$.
If we had $m_{s,r} \neq \infty$, then, by the above, we would have $m_{s,t} = m_{s,r} \neq \infty$, a contradiction. 
Thus $m_{s,r} =\infty$.
Similarly, $m_{t,r} = \infty$.
\end{proof}

\begin{lem}\label{lem3_3}
Let $C,D$ be two distinct equivalence classes of $\equiv_M$.
Then there exists $m \in \N \cup \{ \infty\}$, $m \ge 2$, such that $m_{s,t} = m$ for all $s \in C$ and $t \in D$.
\end{lem}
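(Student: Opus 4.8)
The plan is to show that the entry $m_{s,t}$ is independent of the choice of $s \in C$ and, separately, of the choice of $t \in D$; since $C$ and $D$ are distinct equivalence classes they are disjoint, so every element of $C$ differs from every element of $D$ and there are no degenerate subcases to handle. Concretely, I would establish two symmetric claims: (i) for all $s, s' \in C$ and all $t \in D$ one has $m_{s,t} = m_{s',t}$; and (ii) for all $s \in C$ and all $t, t' \in D$ one has $m_{s,t} = m_{s,t'}$. Granting these, for arbitrary $s, s' \in C$ and $t, t' \in D$ we obtain $m_{s,t} = m_{s',t} = m_{s',t'}$, so all cross-entries coincide, and we take $m$ to be this common value, which is $\ge 2$ because $s \neq t$.

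By symmetry it suffices to prove (i), and the argument would mimic the one used for Lemma \ref{lem3_2}. Fix $s, s' \in C$ with $s \neq s'$ and $t \in D$; note that $t \neq s$ and $t \neq s'$. Since $s \equiv_M s'$ we have both $s \prec_M s'$ and $s' \prec_M s$. Suppose first that $m_{s,t} = m < \infty$. Then $t \in \lk_m(s) \subset \st_m(s')$, and since $t \neq s'$ this forces $t \in \lk_m(s')$, i.e.\ $m_{s',t}$ divides $m = m_{s,t}$; in particular $m_{s',t} < \infty$. Running the same argument with the roles of $s$ and $s'$ exchanged (using $s' \prec_M s$) shows that $m_{s,t}$ divides $m_{s',t}$, whence $m_{s,t} = m_{s',t}$. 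Suppose now that $m_{s,t} = \infty$. If we had $m_{s',t} = m' < \infty$, then $t \in \lk_{m'}(s') \subset \st_{m'}(s)$ together with $t \neq s$ would give $t \in \lk_{m'}(s)$, so $m_{s,t}$ would divide $m'$ and in particular be finite, a contradiction. Hence $m_{s',t} = \infty = m_{s,t}$, which proves (i); claim (ii) follows by interchanging the roles of the two classes.

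The only genuine subtlety, exactly as in the proof of Lemma \ref{lem3_2}, is the interaction with the infinite entries: ``divides'' is not a symmetric relation and $\infty$ does not behave like an ordinary integer, so the finite and infinite cases must be separated and the divisibility chased in both directions to conclude equality rather than mere divisibility. Once this bookkeeping is carried out, the proof is complete.
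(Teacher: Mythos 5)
Your proof is correct and takes essentially the same approach as the paper: a divisibility-chasing argument that uses $\prec_M$ in both directions to pin down the cross-entries, with the finite and infinite cases separated exactly as in Lemma \ref{lem3_2}. You are in fact slightly more explicit than the paper, which only carries out the variation of the second vertex inside $D$ for a fixed $s\in C$ and leaves the symmetric variation inside $C$ and the chaining of the two claims implicit.
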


\begin{proof}
We choose $s \in C$ and $t \in D$ and we set $m=m_{s,t}$.
We take $t' \in D$, $t' \neq t$, and we turn to show that $m_{s,t'} = m$.
Suppose first that $m \neq \infty$.
Since $C$ and $D$ are different, we have $s \neq t$ and $s \neq t'$.
Moreover, $s \in \lk_m(t)$ and $\lk_m(t) \subset \st_m(t')$, hence $s \in \lk_m(t')$, and therefore $m_{s,t'}$ divides $m=m_{s,t}$.
In particular $m_{s,t'} \neq \infty$.
We show that $m_{s,t}$ divides $m_{s,t'}$ with the same argument, hence $m_{s,t'} = m_{s,t}=m$.  
Now, suppose that $m = \infty$.
If we had $m_{s,t'} \neq \infty$, then, by the above, we would have $m_{s,t} = m_{s,t'} \neq \infty$, a contradiction.
So, $m_{s,t'} = \infty$.
\end{proof}

Thanks to Lemma \ref{lem3_3} we can define from $M$ a Coxeter matrix $M^r=(m_{C,D}^r)_{C,D \in S^r}$ as follows. 
\begin{itemize}
\item[(a)]
$S^r$ is the set of equivalence classes of $\equiv_M$.
\item[(b)]
Let $C,D \in S^r$.
Then choose $s \in C$ and $t\in D$, and set $m^r_{C,D} = m_{s,t}$.
\end{itemize}
Furthermore, thanks to Lemma \ref{lem3_2} we can define a map $\lb : S^r \to \N \cup \{ \infty\}$ as follows.
Let $C \in S^r$.
If $|C|=1$ we set $\lb(C)=0$.
If $|C| \ge 2$ we set $\lb(C)=m_{s,t}$, where $s,t$ are two distinct elements of $C$.

\begin{prop}\label{prop3_4}
Let $M=(m_{s,t})_{s,t \in S}$ and $N=(n_{x,y})_{x,y \in T}$ be two Coxeter matrices. 
If there exists an isomorphism $f: M^r \to N^r$ such that $|C| = |f(C)|$ and $\lb (C) = \lb (f(C))$ for all $C \in S^r$, then $M$ and $N$ are isomorphic.
\end{prop}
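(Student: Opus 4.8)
The plan is to build the isomorphism $M \simeq N$ directly from the class-level isomorphism $f : M^r \to N^r$, by choosing, for each equivalence class $C \in S^r$, an arbitrary bijection $\phi_C : C \to f(C)$; since $|C| = |f(C)|$ by hypothesis, such bijections exist. Gluing them together gives a bijection $\phi : S \to T$, and the claim is that $\phi$ realizes an isomorphism of Coxeter matrices, i.e. $m_{s,t} = n_{\phi(s),\phi(t)}$ for all $s,t \in S$ with $s \neq t$. The verification splits into two cases according to whether $s$ and $t$ lie in the same equivalence class of $\equiv_M$ or in different ones.

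First I would handle the case where $s$ and $t$ lie in distinct classes $C \neq D$. Let $C' = f(C)$ and $D' = f(D)$, which are distinct since $f$ is a bijection. By Lemma~\ref{lem3_3} applied to $M$, the value $m_{s,t}$ depends only on the pair $(C,D)$ and equals $m^r_{C,D}$; by Lemma~\ref{lem3_3} applied to $N$, the value $n_{\phi(s),\phi(t)}$ depends only on $(C',D')$ and equals $n^r_{C',D'}$. Since $f$ is an isomorphism of Coxeter matrices $M^r \to N^r$, we have $m^r_{C,D} = n^r_{f(C),f(D)} = n^r_{C',D'}$, so $m_{s,t} = n_{\phi(s),\phi(t)}$ as required.

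Next I would handle the case $s,t \in C$ with $s \neq t$, so in particular $|C| \geq 2$ and hence $|f(C)| \geq 2$. By Lemma~\ref{lem3_2}, $m_{s,t} = \lb(C)$; applying Lemma~\ref{lem3_2} inside $N$ to the class $f(C)$, we get $n_{\phi(s),\phi(t)} = \lb(f(C))$ (here we use that $\phi(s),\phi(t)$ are two distinct elements of $f(C)$). By hypothesis $\lb(C) = \lb(f(C))$, so $m_{s,t} = n_{\phi(s),\phi(t)}$. This exhausts all cases, so $\phi$ is a matrix isomorphism and $M \simeq N$.

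The argument is essentially a bookkeeping assembly once Lemmas~\ref{lem3_2} and~\ref{lem3_3} are in hand; there is no serious obstacle, only the mild point that one must note that the class partition of $S$ is genuinely the domain decomposition over which $\phi$ is defined piecewise, and that the two lemmas are exactly what guarantees $\phi$ is well-defined as a matrix morphism regardless of which representatives were used to define $M^r$ and $N^r$. The only thing to be a little careful about is the degenerate reading of "$\lb(C) = 0$ when $|C| = 1$": in the same-class case we only invoke $\lb$ when $|C| \geq 2$, so the convention for singletons is never used, and the matching of singleton classes under $f$ is automatic from $|C| = |f(C)|$ together with $\lb(C) = \lb(f(C)) = 0$.
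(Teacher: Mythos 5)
Your proposal is correct and follows essentially the same route as the paper: choose a bijection $C \to f(C)$ for each class, glue these to a bijection $\hat f : S \to T$, and check it preserves the matrix entries. The paper leaves the final verification as ``easily seen,'' whereas you carry out the two-case check via Lemmas~\ref{lem3_2} and~\ref{lem3_3} explicitly, which is exactly the intended argument.
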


\begin{proof}
For each $C \in S^r$, since $|C| = |f(C)|$, we can choose a bijection $\hat f_C : C \to f(C)$.
Then we define $\hat f  : S \to T$ as follows. 
Let $s \in S$.
Let $C \in S^r$ such that $s \in C$.
Then we set $\hat f (s) = \hat f_C(s)$.
The map $\hat f: S \to T$ is a bijection by construction.
Moreover, it is easily seen that $m_{s,t} = n_{\hat f(s), \hat f(t)}$ for all $s,t \in S$, hence $\hat f$ is an isomorphism from $M$ to $N$.
\end{proof}


\section{Lie algebras associated to right-angled Artin groups}\label{sec4}

Let $M = (m_{s,t})_{s,t \in S}$ be a right-angled Coxeter matrix.
We assume $S$ to be endowed with a total order $\le$.
Let $\K$ be a field.
Recall that, for $m \in \{2, \infty\}$, we set $E_m(M)=\{ (s,t) \in S \times S \mid s < t \text{ and } m_{s,t} = m\}$.
We set 
\[
L_{\K,1}[M] = \bigoplus_{s \in S} \K u_s\,,\
L_{\K,2}[M] = \bigoplus_{(s,t) \in E_\infty(M)} \K\, v_{s,t}\,,\ 
L_\K[M] = L_{\K,1} [M] \oplus L_{\K,2}[M]\,.
\] 
We define a Lie bracket $[\cdot,\cdot]$ on $L_\K[M]$ as follows.
Let $s,t \in S$, $s \neq t$.
Then $[u_s,u_t]=-[u_t,u_s] = v_{s,t}$ if $s<t$ and $m_{s,t} =\infty$, and $[u_s,u_t] = 0$ if $m_{s,t}=2$.
On the other hand, we set $[\alpha,\beta] = 0$ for all $(\alpha, \beta) \in L_\K[M] \times L_{\K,2} [M]$.
It is clear that $L_\K[M]$ is a well-defined graded Lie $\K$-algebra.
In fact, $L_\K [M] = \K \otimes L[M]$.
The goal of the present section is to prove the following theorem.

\begin{thm}\label{thm4_1}
Let $M$ and $N$ be two right-angled Coxeter matrices and let $\K$ be a field.
If $L_\K[M]$ and $L_\K [N]$ are isomorphic, then $M$ and $N$ are isomorphic.
\end{thm}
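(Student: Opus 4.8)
The plan is to recover the Coxeter matrix $M$ from the Lie algebra $L_\K[M]$ in a way that is invariant under isomorphism, then invoke Proposition \ref{prop3_4}. The first observation is that $L_{\K,1}[M]$ and $L_{\K,2}[M]$ are intrinsic: $L_{\K,2}[M]$ is the center of $L_\K[M]$ intersected with $[L_\K[M],L_\K[M]]$ (or, more simply, $[L_\K[M],L_\K[M]]=L_{\K,2}[M]$ and this is an ideal with trivial bracket), and $L_{\K,1}[M]\cong L_\K[M]/L_{\K,2}[M]$. Thus an isomorphism $\varphi\colon L_\K[M]\to L_\K[N]$ induces linear isomorphisms on the degree-one and degree-two pieces, compatible with the bracket $L_{\K,1}\times L_{\K,1}\to L_{\K,2}$. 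So the problem reduces to a statement in linear/combinatorial algebra: an alternating bilinear map $\beta_M\colon V_M\times V_M\to W_M$ (with $V_M=L_{\K,1}[M]$, $W_M=L_{\K,2}[M]$) whose structure constants in the basis $\{u_s\}$ are the "commutation graph" of $M$ determines $M$ up to isomorphism, once we know $\beta_M$ and $\beta_N$ are equivalent under independent changes of basis in source and target.

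The combinatorial heart is to reconstruct the graph $\Gamma_M$ on vertex set $S$ with an edge $\{s,t\}$ whenever $m_{s,t}=\infty$. The issue is that the basis $\{u_s\}$ is not canonical, so $\varphi$ need not send $u_s$ to a scalar multiple of some $u_x$. This is exactly the phenomenon handled in Kim--Makar-Limanov--Neggers--Roush \cite{KiMaNeRo1}: the "graph algebra" only remembers $\Gamma_M$ up to a notion of equivalence generated by twins (vertices with the same neighbourhood), and the right invariant is not $\Gamma_M$ itself but its reduction. The key algebraic fact to establish is that for $v\in V_M$, the rank of the linear map $\beta_M(v,-)\colon V_M\to W_M$ — equivalently $\dim[v,L_\K[M]]$ — and the way these kernels nest control the neighbourhood structure, and that the partition of (projectivized) $V_M$ into cells on which $\ker\beta_M(v,-)$ is constant corresponds precisely to the equivalence classes $\equiv_M$ of Lemma \ref{lem3_2} together with which classes are joined by an edge. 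Concretely, I would show: (i) for generic $v$ in the span of $\{u_s:s\in C\}$ over an equivalence class $C$, $\ker\beta_M(v,-)$ is the same subspace $K_C$; (ii) $s\equiv_M t$ iff $u_s,u_t$ lie in the same such cell; (iii) whether $\beta_M(u_s,u_t)=0$ for $s\in C$, $t\in D$ is determined by $K_C,K_D$ and $W_M$. Then $\varphi$ must carry the cell decomposition of $V_M$ to that of $V_N$, giving a bijection $f\colon S^r\to T^r$; preservation of $\dim(\text{cell})=|C|$ and of which cells are linked (hence of $n_{C,D}\in\{2,\infty\}$, and of $\lb$, which for right-angled matrices is $0$ or $\infty$) is automatic since $\varphi$ is a Lie isomorphism.

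With $f\colon M^r\to N^r$ in hand satisfying $|C|=|f(C)|$ and $\lb(C)=\lb(f(C))$ — note that for right-angled matrices $\lb(C)\in\{0,\infty\}$ and this is recovered from whether the cell $C$ is "self-linked", i.e.\ $\beta_M$ restricted to $\operatorname{span}\{u_s:s\in C\}$ is nonzero — and that $f$ is a Coxeter-matrix isomorphism $M^r\to N^r$ because it preserves the linking relation between cells, Proposition \ref{prop3_4} immediately yields that $M$ and $N$ are isomorphic. So the overall structure is: reduce to the alternating bilinear map, extract the cell decomposition and its linking data as Lie-algebra invariants, identify these with $M^r$ and $\lb$, and conclude via Proposition \ref{prop3_4}.

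I expect the main obstacle to be step (i)--(ii): proving that the kernels $\ker\beta_M(v,-)$ genuinely detect the equivalence relation $\equiv_M$ and nothing coarser or finer. The subtlety is that a generic vector $v=\sum_s\lambda_s u_s$ with support meeting several classes could a priori have a kernel not attained on any single class, and one must check that the stratification of $\mathbb{P}(V_M)$ by the function $v\mapsto\ker\beta_M(v,-)$ has strata whose closures are exactly the subspaces $\operatorname{span}\{u_s:s\in C\}$ — this is where the precise structure of $\Gamma_M$ (and the fact that we work over a field, so that genericity arguments and dimension counts are available) is used, and where a careful case analysis paralleling the proof of Lemma \ref{lem3_1} and Lemma \ref{lem3_2} will be needed. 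Once the dictionary between cells and $\equiv_M$-classes is nailed down, the rest is bookkeeping.
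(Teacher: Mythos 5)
Your endgame is the same as the paper's: extract from the isomorphism a Coxeter-matrix isomorphism $f^r\colon M^r\to N^r$ preserving $|C|$ and $\lb(C)$ and conclude with Proposition~\ref{prop3_4}; also your preliminary reduction to the induced maps on the two graded pieces is harmless. The genuine gap is in the mechanism you propose for producing $f^r$, namely your claims (i)--(iii), which are asserted rather than proved, and (i) is in fact false as stated. The computation you would need is exactly Lemma~\ref{lem4_4}: for $v$ with support $X$, the degree-one centralizer $Z(v)$ (your $\ker\beta_M(v,-)$) equals $\bigl(\bigoplus_i\K\alpha_i\bigr)\oplus\bigl(\bigoplus_{t\in\lk_2(X)}\K u_t\bigr)$, where the $\alpha_i$ are the pieces of $v$ on the connected components of $\Gamma_{\infty,X}[M]$. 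Now take an $\equiv_M$-class $C$ with $|C|\ge 2$ and $\lb(C)=\infty$ (by Lemma~\ref{lem3_2} all pairs in $C$ carry the label $\infty$, and all members share the same $2$-link). For $v=\sum_{s\in C}a_su_s$ with all $a_s\neq0$, the graph $\Gamma_{\infty,C}[M]$ is complete, hence connected, so $Z(v)=\K v\oplus\bigl(\bigoplus_{t\in\lk_2(C)}\K u_t\bigr)$: this subspace varies with $v$, so there is no common $K_C$ attained by generic $v$ in the span of $C$; moreover for distinct $s,t\in C$ the kernels of $u_s$ and $u_t$ are the spans of $\st_2(s)$ and $\st_2(t)$, which are different even though $s\equiv_M t$. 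More generally, whenever $\supp(v)$ contains an edge of $\Gamma_\infty[M]$ the kernel depends on the coefficient ratios along each component, so the stratification of $L_{\K,1}[M]$ by $v\mapsto Z(v)$ is strictly finer than the decomposition into spans of $\equiv_M$-classes, and its strata are not what claim (ii) requires. Since (ii)--(iii) are what would give you the map $f^r$ and the preservation of $|C|$ and $\lb$, the core of the argument is missing --- and you yourself flag this step as the unresolved ``main obstacle''.

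For comparison, the paper does not attempt any intrinsic stratification. It keeps the distinguished bases and works with the basis-dependent correspondence $s\leftrightarrow_f x$ defined by $s\in\supp(f^{-1}(u_x))$ and $x\in\supp(f(u_s))$, which exists for every $s$ by Lemma~\ref{lem4_6}; then, using Lemma~\ref{lem4_4} repeatedly, it shows that this correspondence transports commutation (Lemmas~\ref{lem4_5} and~\ref{lem4_7}), the quasi-order $\prec$ (Lemmas~\ref{lem4_8} and~\ref{lem4_10}) and hence $\equiv$, and finally class sizes and labels (Lemmas~\ref{lem4_11} and~\ref{lem4_12}), before invoking Proposition~\ref{prop3_4}. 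Those lemmas are precisely the content your proposal defers: to complete your route you would have to replace the constancy-of-kernels claim by invariants that genuinely separate the classes (for instance centralizer dimensions together with how the centralizers intersect as $v$ ranges over a cell, or the support-based correspondence above), which in effect amounts to reconstructing the paper's Lemmas~\ref{lem4_5}--\ref{lem4_12} in another guise.
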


\begin{corl}\label{corl4_2}
Let $M$ and $N$ be two right-angled Coxeter matrices.
If $L[M]$ and $L[N]$ are isomorphic, then $M$ and $N$ are isomorphic.
\end{corl}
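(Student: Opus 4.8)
The plan is to deduce Corollary~\ref{corl4_2} from Theorem~\ref{thm4_1} by extension of scalars from $\Z$ to a field. I would fix a total order on the index set of $M$ and one on the index set of $N$, and fix an arbitrary field $\K$; the particular choice will be irrelevant, so one may take $\K=\mathbb Q$, or $\K=\F_p$ for any prime $p$. Recall from the discussion preceding Theorem~\ref{thm4_1} that $L_\K[M]=\K\otimes L[M]$ and $L_\K[N]=\K\otimes L[N]$, the Lie bracket on $\K\otimes L[M]$ being the $\K$-bilinear extension of the bracket on $L[M]$, and likewise for $N$.

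Now suppose $\varphi\colon L[M]\to L[N]$ is an isomorphism of Lie $\Z$-algebras. Since tensoring with $\K$ is a functor on $\Z$-modules, the map $\id_\K\otimes\varphi\colon\K\otimes L[M]\to\K\otimes L[N]$ is an isomorphism of $\K$-modules; and because $\varphi$ respects the bracket of $L[M]$ while the brackets on the two tensor products are the $\K$-bilinear extensions of those on $L[M]$ and $L[N]$, the map $\id_\K\otimes\varphi$ respects brackets as well. Hence $L_\K[M]\simeq L_\K[N]$ as Lie $\K$-algebras, and Theorem~\ref{thm4_1} gives $M\simeq N$, which is what we want.

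I expect essentially no obstacle here; the only thing to check is that $\K\otimes(-)$ genuinely carries $L[M]$ to $L_\K[M]$ compatibly with the grading $L[M]=L_1[M]\oplus L_2[M]$ and with the bracket. This is immediate because in the right-angled case there is no torsion summand: $L_1[M]=\bigoplus_{s\in S}\Z u_s$ and $L_2[M]=\bigoplus_{(s,t)\in E_\infty(M)}\Z v_{s,t}$ are free $\Z$-modules with the displayed bases, so after tensoring with $\K$ their images are $L_{\K,1}[M]$ and $L_{\K,2}[M]$ with the corresponding $\K$-bases, and the defining relations $[u_s,u_t]=v_{s,t}$ (when $m_{s,t}=\infty$), $[u_s,u_t]=0$ (when $m_{s,t}=2$) and $[\alpha,\beta]=0$ for $\beta\in L_{\K,2}[M]$ hold verbatim after scalar extension. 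If one later wanted to keep the whole argument over a single prime, taking $\K=\F_p$ throughout would change nothing.
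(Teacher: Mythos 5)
Your proposal is correct and is essentially the paper's own argument: the paper also deduces the corollary from Theorem \ref{thm4_1} by extending scalars, tensoring the given isomorphism $L[M]\simeq L[N]$ with the specific field $\mathbb{Q}$ to get $L_{\mathbb{Q}}[M]\simeq L_{\mathbb{Q}}[N]$. Your extra remarks (freeness of the $\Z$-modules, compatibility of the bracket with scalar extension, and that any field such as $\F_p$ would do) are accurate but do not change the route.
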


\begin{proof}
Let $M$ and $N$ be two right-angled Coxeter matrices such that $L[M]$ and $L[N]$ are isomorphic. 
Then $\Q \otimes L[M] = L_\Q[M]$ is isomorphic to $\Q \otimes L[N] = L_\Q [N]$, hence, by Theorem \ref{thm4_1}, $M$ and $N$ are isomorphic. 
\end{proof}

\begin{corl}[Droms \cite{Droms1}]\label{corl4_3}
Let $\EE_{ra}$ be the set of right-angled Coxeter matrices. 
Then $\AA [\EE_{ra}]$ is rigid.
\end{corl}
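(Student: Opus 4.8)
The goal is to prove Corollary \ref{corl4_3}, i.e. that the family $\AA[\EE_{ra}]$ of right-angled Artin groups is rigid. This is immediate from the machinery already developed, and the plan is to chain together Theorem \ref{thm2_1} (specialized to the right-angled case), the functoriality remark that isomorphic groups have isomorphic truncated Lie algebras, and Corollary \ref{corl4_2}.

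First I would observe that a right-angled Coxeter matrix $M$ is in particular an even Coxeter matrix, since every off-diagonal entry $m_{s,t}\in\{2,\infty\}$ is even (reading $\infty$ as allowed). Hence Theorem \ref{thm2_1} applies and gives $\TGr(A[M])\simeq L[M]$; moreover in the right-angled case the only finite off-diagonal value is $2$, which contributes nothing to $L_2[M]$, so $L[M]$ coincides with the Lie $\Z$-algebra denoted $L[M]$ at the start of Section \ref{sec4} (equivalently $L_\Z[M]$, whose only $2$-part generators are the $v_{s,t}$ for $(s,t)\in E_\infty(M)$). So for any two right-angled Coxeter matrices $M$ and $N$ we have $\TGr(A[M])\simeq L[M]$ and $\TGr(A[N])\simeq L[N]$.

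Next, suppose $A[M]\simeq A[N]$. Since the construction $G\mapsto\TGr(G)$ is a group invariant (noted explicitly after the definition of $\TGr$), we get $\TGr(A[M])\simeq\TGr(A[N])$ as Lie $\Z$-algebras, and therefore $L[M]\simeq L[N]$. By Corollary \ref{corl4_2} this forces $M\simeq N$. The converse implication — that $M\simeq N$ yields $A[M]\simeq A[N]$ — is trivial, since an isomorphism of Coxeter matrices is a bijection of the generating sets carrying relators to relators, hence induces an isomorphism of the presented groups. Combining the two directions: $A[M]\simeq A[N]$ if and only if $M\simeq N$, which is exactly the assertion that $\AA[\EE_{ra}]$ is rigid.

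There is essentially no obstacle here: all the real work has been done in Theorem \ref{thm2_1} and Theorem \ref{thm4_1}/Corollary \ref{corl4_2}. The only point requiring a word of care is the identification, in the right-angled setting, of the Lie algebra produced by Theorem \ref{thm2_1} with the Lie algebra $L[M]$ of Section \ref{sec4} — but this is immediate because the summand $\bigoplus_{2\le n<\infty}\bigoplus_{(s,t)\in E_{2n}(M)}(\Z/n\Z)v_{s,t}$ in $L_2[M]$ is empty when $m_{s,t}\in\{2,\infty\}$ for all $s\ne t$ (the case $n=1$ contributing no generator), so only the $E_\infty$-part survives, matching the definition of $L[M]=L_\Z[M]$ used in Corollary \ref{corl4_2}.
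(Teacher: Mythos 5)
Your argument is correct and is essentially the paper's own proof: an isomorphism $A[M]\simeq A[N]$ gives $\TGr(A[M])\simeq\TGr(A[N])$, Theorem \ref{thm2_1} identifies these with $L[M]$ and $L[N]$, and Corollary \ref{corl4_2} then forces $M\simeq N$, the converse direction being trivial. Your extra remark that in the right-angled case $L[M]$ has only the $E_\infty$-part in degree $2$ is a correct (and harmless) clarification of the same route.
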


\begin{proof}
Let $M$ and $N$ be two right-angled Coxeter matrices such that $A[M]$ and $A[N]$ are isomorphic. 
Then $\TGr (A[M])$ and $\TGr(A[N])$ are isomorphic, hence, by Theorem \ref{thm2_1}, $L[M]$ and $L[N]$ are isomorphic, and therefore, by Corollary \ref{corl4_2}, $M$ and $N$ are isomorphic. 
\end{proof}

The remaining of the section is devoted to the proof of Theorem \ref{thm4_1}.

Let $M=(m_{s,t})_{s,t \in S}$ be a right-angled Coxeter matrix. 
We associate to $M$ a graph $\Gamma_\infty[M]$ as follows.
\begin{itemize}
\item[(a)]
$S$ is the set of vertices of $\Gamma_\infty [M]$.
\item[(b)]
Two vertices $s,t \in S$, $s \neq t$, are connected by an edge if $m_{s,t} = \infty$.
\end{itemize}
For $X \subset S$ we denote by $\Gamma_{\infty,X} [M]$ the full subgraph of $\Gamma_\infty [M]$ spanned by $X$.
On the other hand, for $X \subset S$ we set $\lk_2(X) = \cap_{s \in X} \lk_2(s)$.

Let $\K$ be a field.
The \emph{support} of an element $\alpha = \sum_{s \in S} a_s u_s \in L_{\K,1} [M]$ is defined to be $\supp (\alpha) = \{ s \in S \mid a_s \neq 0 \}$.
On the other hand, the \emph{centralizer} of an element $\alpha \in L_{\K,1}[M]$ is $Z(\alpha) = \{\beta \in L_{\K,1} [M] \mid [\alpha, \beta] = 0\}$.

\begin{lem}\label{lem4_4}
Let $\K$ be a field, let $M=(m_{s,t})_{s,t \in S}$ be a right-angled Coxeter matrix, and let $\alpha = \sum_{s \in S} a_s u_s \in L_{\K,1} [M]$.
Let $X = \supp(\alpha)$, and let $\Omega_1, \dots, \Omega_\ell$ be the connected components of $\Gamma_{\infty,X} [M]$.
For each $i \in \{1, \dots, \ell\}$ we denote by $X_i$ the set of vertices of $\Omega_i$ and we set $\alpha_i = \sum_{s \in X_i} a_s u_s$.
Then 
\[
Z(\alpha) = \left( \bigoplus_{i=1}^\ell \K \alpha_i \right) \oplus \left(\bigoplus_{s \in \lk_2(X)} \K u_s \right)\,.
\]
\end{lem}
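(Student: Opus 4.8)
The plan is to compute the centralizer $Z(\alpha)$ directly from the definition of the Lie bracket on $L_\K[M]$. Write $\beta = \sum_{s \in S} b_s u_s \in L_{\K,1}[M]$. Since $[u_s,u_t] = v_{s,t}$ (up to sign) when $m_{s,t} = \infty$ and $[u_s,u_t]=0$ when $m_{s,t}=2$, and the $v_{s,t}$ with $(s,t) \in E_\infty(M)$ form a $\K$-basis of $L_{\K,2}[M]$, we get
\[
[\alpha,\beta] = \sum_{\substack{(s,t) \in E_\infty(M)}} (a_s b_t - a_t b_s)\, v_{s,t}\,.
\]
Hence $\beta \in Z(\alpha)$ if and only if $a_s b_t = a_t b_s$ for every pair $s,t$ with $m_{s,t}=\infty$, i.e.\ for every edge $\{s,t\}$ of $\Gamma_\infty[M]$.

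Next I would analyze this system of equations. Split the edges of $\Gamma_\infty[M]$ into three cases according to how they meet $X = \supp(\alpha)$. If $\{s,t\}$ is an edge with $s,t \notin X$, the equation $a_sb_t = a_tb_s$ is $0=0$ and imposes nothing. If $\{s,t\}$ is an edge with exactly one endpoint, say $s$, in $X$ (so $a_s \neq 0$, $a_t = 0$), the equation reads $a_s b_t = 0$, forcing $b_t = 0$; thus any vertex $t \notin X$ that is $\Gamma_\infty$-adjacent to $X$ cannot appear in $\supp(\beta)$. The vertices $t \notin X$ that survive are exactly those with $m_{s,t} \neq \infty$, i.e.\ $m_{s,t} = 2$, for all $s \in X$ — that is, $t \in \lk_2(X)$ — and for these the summand $\K u_t$ lies entirely in $Z(\alpha)$ since $u_t$ commutes with every $u_s$, $s \in X$. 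Finally, for an edge $\{s,t\}$ with both endpoints in $X$, both $a_s, a_t \neq 0$, and $a_s b_t = a_t b_s$ says $b_s/a_s = b_t/a_t$. Propagating this equality along edges within each connected component $\Omega_i$ of $\Gamma_{\infty,X}[M]$ shows that $b_s/a_s$ is a constant $\lambda_i$ on $X_i$; conversely any such choice of constants gives an element of $Z(\alpha)$. The component parts contribute exactly $\bigoplus_i \K\alpha_i$, and these are independent of the $\K u_t$, $t \in \lk_2(X)$, since the supports are disjoint (note $\lk_2(X) \cap X = \emptyset$). Collecting the cases gives the claimed direct sum decomposition.

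One small point to verify carefully is that the $X_i$ together with $\lk_2(X)$ index disjoint coordinate subspaces, so that the sum is genuinely direct; this follows because $\bigcup_i X_i = X$ and $\lk_2(X) \subseteq S \setminus X$, and because the generators $u_s$ form a basis of $L_{\K,1}[M]$. I do not anticipate a serious obstacle here: the argument is a routine unwinding of the bracket formula plus a connectedness propagation along edges of $\Gamma_{\infty,X}[M]$. The only place requiring a little care is the mixed-edge case, where one must observe that being a neighbor of $X$ in $\Gamma_\infty[M]$ is the complement, among $S \setminus X$, of the condition $t \in \lk_2(X)$ — in other words every $t \notin X$ either has some $\Gamma_\infty$-neighbor in $X$ (and is then excluded from $\supp(\beta)$) or has $m_{s,t}=2$ for all $s \in X$ (and then contributes freely). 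With that dichotomy in hand the computation closes.
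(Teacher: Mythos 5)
Your proposal is correct and follows essentially the same route as the paper: compute $[\alpha,\beta]=\sum_{(s,t)\in E_\infty(M)}(a_sb_t-a_tb_s)v_{s,t}$, kill the coefficients $b_t$ at vertices outside $X$ that are $\Gamma_\infty$-adjacent to $X$, and propagate the ratio $b_s/a_s$ along edges inside each connected component $\Omega_i$, which together with the reverse inclusion yields the stated decomposition. No issues to flag.
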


\begin{proof}
Set 
\[
V = \left( \bigoplus_{i=1}^\ell \K \alpha_i \right) \oplus \left(\bigoplus_{s \in \lk_2(X)} \K u_s \right)\,.
\]
The inclusion $V \subset Z(\alpha)$ is easily checked, so we just need to show that $Z(\alpha) \subset V$.
Let $\beta = \sum_{s \in S} b_s u_s \in Z(\alpha)$.
We have 
\begin{equation}\label{eq4_1}
0 = [\alpha, \beta] = \sum_{(s,t) \in E_\infty (M)} (a_s b_t - a_t b_s) v_{s,t}\,.
\end{equation}
Let $s \in S$ such that $s \not\in X$ and $s \not\in \lk_2(X)$.
There exists $t \in X$ such that $m_{s,t} = \infty$.
Suppose that $s < t$, that is, $(s,t) \in E_\infty(M)$.
By Equation (\ref{eq4_1}), $a_tb_s = 0$ since $a_s=0$, hence $b_s=0$ since $a_t \neq 0$.
Similarly, if $t<s$, then $b_s=0$.
Let $i \in \{1, \dots, \ell\}$.
If $s,t \in X_i$ are such that $(s,t) \in E_\infty(M)$, then, by Equation (\ref{eq4_1}), $a_s b_t = a_t b_s$, hence $\frac{b_s}{a_s} = \frac{b_t}{a_t}$.
Since $\Omega_i$ is connected, it follows that $\frac{b_s}{a_s} = \frac{b_t}{a_t}$ for all $s,t \in X_i$.
We choose $s \in X_i$ and we set $c_i = \frac{b_s}{a_s}$.
Then $\sum_{t \in X_i} b_t u_t = c_i \alpha_i$.
So, $\beta = ( \sum_{i=1}^\ell c_i \alpha_i ) + (\sum_{s \in \lk_2(X)} b_s u_s) \in V$.
\end{proof}

From now on we assume given a field $\K$, two right-angled Coxeter matrices $M = (m_{s,t})_{s,t \in S}$ and  $N=(n_{x,y})_{x,y \in T}$, and an isomorphism $f : L_\K[M] \to L_\K[N]$.
Our aim is to prove that $M$ and $N$ are isomorphic.

\begin{lem}\label{lem4_5}
Let $s \in S$ and $x,y \in T$ such that $n_{x,y}=2$ and $s \in \supp(f^{-1}(u_x))$.
Then $[u_y,f(u_s)]=0$.
\end{lem}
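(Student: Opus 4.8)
The plan is to analyze the image $f(u_s)$ under $f$ and show that bracketing with $u_y$ kills it, using that $f$ is a Lie algebra isomorphism together with the structural description of centralizers from Lemma \ref{lem4_4}. The key observation is that $s \in \supp(f^{-1}(u_x))$ places $u_s$ inside a specific centralizer, and the hypothesis $n_{x,y} = 2$ tells us $[u_x, u_y] = 0$ in $L_\K[N]$, so $u_x$ and $u_y$ commute; transporting this back through $f^{-1}$ should force $f(u_s)$ to commute with $u_y$ as well.

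More concretely, here is how I would carry it out. Set $\gamma = f^{-1}(u_x) = \sum_{r \in S} a_r u_r \in L_{\K,1}[M]$, so by assumption $a_s \neq 0$. Since $n_{x,y} = 2$ we have $u_y \in Z(u_x)$ in $L_{\K,1}[N]$. Apply $f^{-1}$: then $f^{-1}(u_y) \in Z(\gamma)$, where $Z(\gamma)$ is the centralizer of $\gamma$ in $L_{\K,1}[M]$. Now invoke Lemma \ref{lem4_4} with $\alpha = \gamma$: writing $X = \supp(\gamma)$ and letting $\Omega_1, \dots, \Omega_\ell$ be the connected components of $\Gamma_{\infty, X}[M]$ with vertex sets $X_1, \dots, X_\ell$ and $\gamma_i = \sum_{r \in X_i} a_r u_r$, we get
\[
Z(\gamma) = \left( \bigoplus_{i=1}^\ell \K \gamma_i \right) \oplus \left( \bigoplus_{r \in \lk_2(X)} \K u_r \right)\,.
\]
Since $s \in X$, say $s \in X_{i_0}$, the element $\gamma_{i_0}$ lies in $Z(\gamma)$, and the coefficient of $u_s$ in $\gamma_{i_0}$ is $a_s \neq 0$. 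The point is that $\gamma_{i_0}$ therefore ``remembers'' $u_s$. I would then show $[u_y, f(u_s)] = 0$ by relating $f(u_s)$ to $f(\gamma_{i_0})$ and $f(u_r)$ for $r \in \lk_2(X)$. Here one needs that $f$ carries the $\K$-module decomposition of $Z(\gamma)$ to a decomposition of $Z(u_x)$ inside $L_{\K,1}[N]$, and more importantly that $u_y$ brackets trivially with everything in $Z(u_x)$, or at least with the relevant summand. The cleanest route: bracket $\gamma_{i_0}$ against $u_y$ after applying $f$, namely $[u_y, f(\gamma_{i_0})]$, expand $f(\gamma_{i_0})$ in the basis of $L_{\K,1}[N]$, and compare the $v_{s,t}$-coefficients of this bracket with those coming from the individual terms $[u_y, f(u_r)]$ for $r \in X_{i_0}$, isolating the $u_s$-contribution via its nonzero coefficient $a_s$.

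The main obstacle I anticipate is precisely the last step: passing from ``$f(\gamma_{i_0})$ commutes with $u_y$'' (which we do get, since $\gamma_{i_0} \in Z(\gamma)$ and $f$ is a Lie isomorphism, so $f(\gamma_{i_0}) \in Z(u_x)$, and then one must argue $Z(u_x)$ commutes with $u_y$ — but this latter claim is false in general, so care is needed) to the statement about the single generator $f(u_s)$. In fact $u_y$ need not commute with all of $Z(u_x)$; what is true is that $u_y \in Z(u_x)$, and one should probably instead run the argument symmetrically or use that $f(u_s)$, being a $\K$-combination, has its $u_s$-part detectable. I expect the resolution is to not decompose $f(u_s)$ directly but to use that $f(\gamma_{i_0})$ is a scalar multiple of one of the ``component'' elements attached to $\supp(u_x) = \{x\}$ in $N$ — which forces it to be a multiple of $u_x$ plus possibly elements of $\lk_2$-type — combined with a dimension or support-counting argument; alternatively one reduces to the case where $X$ is connected by passing to a suitable component and applying Lemma \ref{lem4_4} again on the $N$ side. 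The bookkeeping of which generators survive the bracket with $u_y$ is the delicate part, but it is entirely a consequence of Lemma \ref{lem4_4} applied on both sides of $f$.
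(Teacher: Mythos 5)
Your opening moves match the paper's: from $n_{x,y}=2$ you get $[f^{-1}(u_x),f^{-1}(u_y)]=0$ and Lemma \ref{lem4_4} gives the structure of $Z(f^{-1}(u_x))$. But beyond that point the argument has a genuine gap. Everything you propose is phrased in terms of $f(\gamma_{i_0})$, and you never obtain a statement about the single generator $f(u_s)$. The claim $[u_y,f(\gamma_{i_0})]=0$ is not justified by the route you name (as you yourself observe, $f(\gamma_{i_0})\in Z(u_x)$ does not help, since $u_y$ need not commute with $Z(u_x)$); it does hold, but only after writing $f^{-1}(u_y)=\sum_{i=1}^{\ell}c_i\alpha_i+\gamma$ with $\supp(\gamma)\subset\lk_2(X)$, which you do not do. And even granted $[u_y,f(\gamma_{i_0})]=0$, this is a statement about the combination $\sum_{r\in X_{i_0}}a_rf(u_r)$ and cannot be disentangled into $[u_y,f(u_r)]=0$ term by term; your suggested fixes (``dimension or support-counting'', ``reduce to $X$ connected'') are left as possibilities rather than arguments. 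So the proof is not complete.

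The missing device, which is what the paper does, is to subtract a multiple of $f^{-1}(u_x)$ so as to kill the component of $s$ and then bracket with $u_s$ itself, not with $\gamma_{i_0}$. Write $f^{-1}(u_y)=\sum_{i=1}^{\ell}c_i\alpha_i+\gamma$ with $\supp(\gamma)\subset\lk_2(X)$, and let $j$ be such that $s\in X_j$. The element $f^{-1}(u_y)-c_jf^{-1}(u_x)$ has support contained in $(X\setminus X_j)\cup\lk_2(X)$, and every generator indexed there commutes with $u_s$: vertices of $X$ in other components are not joined to $s$ in $\Gamma_{\infty,X}[M]$, so carry label $2$ with $s$ since $M$ is right-angled, and $\lk_2(X)\subset\lk_2(s)$. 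Hence $[f^{-1}(u_y)-c_jf^{-1}(u_x),u_s]=0$, and applying $f$ gives $[u_y-c_ju_x,f(u_s)]=0$. If $c_j=0$ this is the conclusion. If $c_j\neq0$, apply Lemma \ref{lem4_4} on the $N$-side to $u_y-c_ju_x$: since $n_{x,y}=2$, its support $\{x,y\}$ spans no edge of $\Gamma_\infty[N]$, so its components are the singletons $\{x\}$ and $\{y\}$, and therefore $f(u_s)$ is a combination of $u_x$, $u_y$ and generators $u_z$ with $z\in\lk_2(\{x,y\})$. Thus $\supp(f(u_s))\subset\st_2(y)$, whence $[u_y,f(u_s)]=0$. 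This passage from information about $f^{-1}(u_y)$ to a constraint on $f(u_s)$ itself is exactly the step your outline lacks.
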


\begin{proof}
We set $f^{-1}(u_x) = \sum_{t \in S} a_t u_t$.
Let $X = \supp(f^{-1}(u_x))$, and let $\Omega_1, \dots, \Omega_\ell$ be the connected components of $\Gamma_{\infty,X} [M]$.
For each $i \in \{1, \dots, \ell\}$ we denote by $X_i$ the set of vertices of $\Omega_i$, and we set $\alpha_i = \sum_{t \in X_i} a_t u_t$.
We have $[u_x,u_y]=0$, hence $[f^{-1}(u_x), f^{-1}(u_y)]=0$, and therefore, by Lemma \ref{lem4_4}, $f^{-1}(u_y)$ is written in the form $f^{-1}(u_y) = \sum_{i=1}^\ell c_i \alpha_i + \gamma$, where $c_1, \dots, c_\ell \in \K$ and $\supp(\gamma) \subset \lk_2(X)$.
Since $s \in \supp(f^{-1}(u_x)) = X$, there exits $j \in \{1, \dots, \ell\}$ such that $s \in X_j$.
Then $[f^{-1}(u_y) - c_j\,f^{-1}(u_x),u_s]=0$, hence $[u_y-c_ju_x,f(u_s)]=0$.
If $c_j =0$, then the above equality becomes $[u_y,f(u_s)]=0$ and there is nothing else to prove.
So, we may assume that $c_j \neq 0$.
Since $n_{x,y}=2$, by applying again Lemma \ref{lem4_4} we get $f(u_s) = b_1u_y + b_2c_ju_x + \gamma'$, where $b_1,b_2 \in \K$ and $\supp(\gamma') \subset \lk_2(\{x,y\})$.
Then $\supp(f(u_s))\subset\st_2(u_y)$, hence $[u_y,f(u_s)]=0$ in this case as well. 
\end{proof}

For $s \in S$ and $x \in T$ we set $s \leftrightarrow_f x$ if $s \in \supp (f^{-1}(u_x))$ and $x \in \supp (f(u_s))$.

\begin{lem}\label{lem4_6}
Let $s \in S$.
Then there exists $x \in T$ such that $s \leftrightarrow_f x$.
\end{lem}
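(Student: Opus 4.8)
The plan is to argue by contradiction: suppose there is some $s \in S$ such that for every $x \in T$ we fail to have $s \leftrightarrow_f x$, i.e., for each $x \in T$ either $s \notin \supp(f^{-1}(u_x))$ or $x \notin \supp(f(u_s))$. Write $f(u_s) = \sum_{x \in T} b_x u_x$ and let $Y = \supp(f(u_s)) = \{x \in T \mid b_x \neq 0\}$. The failure of $s \leftrightarrow_f x$ for $x \in Y$ forces $s \notin \supp(f^{-1}(u_x))$ for all $x \in Y$. I would then try to express $u_s$ back in terms of the $f^{-1}(u_x)$: since $f$ is an isomorphism of the degree-one part, $u_s = \sum_{x \in T} b_x f^{-1}(u_x)$, and the coefficient of $u_s$ on the left is $1$, while on the right it is $\sum_{x \in Y} b_x \langle u_s\text{-coefficient of } f^{-1}(u_x)\rangle = 0$ because $s \notin \supp(f^{-1}(u_x))$ for each $x \in Y$. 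This is the contradiction, since $1 \neq 0$ in the field $\K$.

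The key steps, in order: first, fix $s$ and set up the two expansions $f(u_s) = \sum_x b_x u_x$ and $f^{-1}(u_x) = \sum_t a^{(x)}_t u_t$; second, observe that $f$ and $f^{-1}$ restrict to mutually inverse linear isomorphisms of $L_{\K,1}[M]$ and $L_{\K,1}[N]$ (this uses only that the degree-one parts are preserved, which is immediate since the bracket raises degree and $L_{\K,1}$ is exactly the set of degree-one elements); third, apply $f^{-1}$ to the first expansion to get $u_s = \sum_{x \in Y} b_x f^{-1}(u_x)$; fourth, read off the $u_s$-coefficient on both sides and use the contradiction hypothesis $a^{(x)}_s = 0$ for all $x \in Y$ to conclude $1 = 0$.

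The main obstacle — really the only subtlety — is making sure the linear-algebra bookkeeping is airtight: one must be careful that $f$ genuinely sends $L_{\K,1}[M]$ into $L_{\K,1}[N]$ and likewise for $f^{-1}$, so that taking the $u_s$-coefficient of $f^{-1}(u_x)$ is meaningful. This follows because $L_{\K,i}[M]$ is the homogeneous degree-$i$ component of a graded Lie algebra and any Lie algebra isomorphism between $L_\K[M]$ and $L_\K[N]$ must — a priori — only respect the bracket, not the grading; however, the degree-one part can be characterized bracket-theoretically (it generates the algebra, or alternatively $L_{\K,2}[M]$ is central and the quotient is spanned by degree-one elements), so $f(L_{\K,1}[M])$ may not literally equal $L_{\K,1}[N]$ but projects isomorphically onto it modulo the center, which is all the argument needs after composing with that projection. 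Once this is settled, the remainder is a one-line coefficient comparison.
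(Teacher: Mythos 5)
Your argument is correct and is essentially the paper's proof: both amount to reading off the $u_s$-coefficient in $u_s=(f^{-1}\circ f)(u_s)=\sum_{x\in T}\sum_{t\in S}a_{s,x}b_{x,t}u_t$, which forces the existence of some $x$ with $a_{s,x}\neq 0$ and $b_{x,s}\neq 0$, i.e. $s\leftrightarrow_f x$; you merely phrase the same coefficient comparison as a contradiction. One small caveat about your closing aside: if one did worry about $f$ not respecting the grading, the correct object to factor out is the derived subalgebra $[L_\K[M],L_\K[M]]=L_{\K,2}[M]$ rather than the center (the center can contain degree-one elements, for instance $u_s$ when $m_{s,t}=2$ for all $t\neq s$, and projecting modulo it would kill the very $u_s$-coefficient you compare); in the paper's setting $f$ is taken to preserve the grading, so this issue does not arise.
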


\begin{proof}
For $s \in S$ we set $f(u_s) = \sum_{x \in T} a_{s,x} u_x$ and for $x \in T$ we set $f^{-1}(u_x) = \sum_{s \in S} b_{x,s} u_s$.
Let $s \in S$.
Then  
\[
u_s = (f^{-1} \circ f)(u_s) = \sum_{x \in T} \sum_{t \in S} a_{s,x}b_{x,t}u_t\,,
\]
hence there exists $x \in T$ such that $a_{s,x} \neq 0$ and $b_{x,s} \neq 0$.
Then $s \in \supp (f^{-1} (u_x))$ and $x\in \supp (f(u_s))$, that is, $s \leftrightarrow_f x$.
\end{proof}

\begin{lem}\label{lem4_7}
Let $s,t \in S$, $s \neq t$, and $x,y \in T$, $x \neq y$, such that $s \leftrightarrow_f x$ and $t \leftrightarrow_f y$.
We have $m_{s,t}=2$ if and only if $n_{x,y}=2$.
\end{lem}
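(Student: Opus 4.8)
The plan is to deduce the biconditional from a single implication, exploiting the symmetry of the relation $\leftrightarrow_f$. The key observation is that $s\leftrightarrow_f x$ is the same as $x\leftrightarrow_{f^{-1}}s$, since both unfold to ``$s\in\supp(f^{-1}(u_x))$ and $x\in\supp(f(u_s))$''. So it suffices to prove the following assertion, for every field $\K$, every pair of right-angled Coxeter matrices $M=(m_{s,t})_{s,t\in S}$, $N=(n_{x,y})_{x,y\in T}$, and every isomorphism $f\colon L_\K[M]\to L_\K[N]$: if $s\in\supp(f^{-1}(u_x))$, $t\in\supp(f^{-1}(u_y))$, $s\neq t$, $x\neq y$ and $n_{x,y}=2$, then $m_{s,t}=2$. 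Indeed, granting this assertion and applying it to $f^{-1}\colon L_\K[N]\to L_\K[M]$ with the roles of $(s,t)$ and $(x,y)$ interchanged — which is legitimate precisely because $x\leftrightarrow_{f^{-1}}s$ and $y\leftrightarrow_{f^{-1}}t$, i.e.\ because $\leftrightarrow_f$ records \emph{both} support conditions — one obtains the converse implication $m_{s,t}=2\Rightarrow n_{x,y}=2$.

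To prove the assertion, I would argue as follows. Suppose $n_{x,y}=2$. Since $s\in\supp(f^{-1}(u_x))$, Lemma \ref{lem4_5} applied to the triple $s,x,y$ gives $[u_y,f(u_s)]=0$ in $L_\K[N]$. Applying the Lie algebra isomorphism $f^{-1}$ to this identity and using $f^{-1}(f(u_s))=u_s$, I get $[f^{-1}(u_y),u_s]=0$; equivalently, since $f^{-1}$ is graded, $f^{-1}(u_y)\in Z(u_s)$ inside $L_{\K,1}[M]$.

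Now I invoke Lemma \ref{lem4_4} with $\alpha=u_s$: its support is the single vertex $s$, the graph $\Gamma_{\infty,\{s\}}[M]$ has one connected component, and $\lk_2(\{s\})=\lk_2(s)$, so $Z(u_s)=\K u_s\oplus\bigoplus_{r\in\lk_2(s)}\K u_r$; in particular every element of $Z(u_s)$ has support inside $\st_2(s)=\{s\}\cup\lk_2(s)$. Hence $\supp(f^{-1}(u_y))\subseteq\st_2(s)$. As $t\in\supp(f^{-1}(u_y))$ and $t\neq s$, this forces $t\in\lk_2(s)$, i.e.\ $m_{s,t}$ divides $2$, which means $m_{s,t}=2$. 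This establishes the assertion, and with it the lemma.

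I do not expect a serious obstacle here; the argument is short. The two points that need care are the symmetry reduction — one must check that replacing $f$ by $f^{-1}$ and swapping $(s,t)\leftrightarrow(x,y)$ transports every hypothesis correctly, which is exactly where the two-sided nature of $\leftrightarrow_f$ is used — and the choice to read the centralizer condition off $u_s$ via Lemma \ref{lem4_4} with $\alpha=u_s$, rather than off $f^{-1}(u_x)$: the latter choice would force a case analysis on the connected components of $\Gamma_\infty[M]$ (whether $s$ and $t$ lie in the same component of the $\Gamma_\infty$-support of $f^{-1}(u_x)$), which this route avoids entirely.
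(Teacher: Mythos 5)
Your argument is correct and follows essentially the same route as the paper: apply Lemma \ref{lem4_5} to get $[u_y,f(u_s)]=0$, pull back by $f^{-1}$ to obtain $[f^{-1}(u_y),u_s]=0$, read off $\supp(f^{-1}(u_y))\subset\st_2(s)$ from Lemma \ref{lem4_4} (the paper does this implicitly with $\alpha=u_s$, just as you do explicitly), and conclude $t\in\lk_2(s)$. Your explicit symmetry reduction via $f^{-1}$ is exactly what the paper compresses into ``we show in the same way''.
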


\begin{proof}
Suppose that $n_{x,y}=2$.
Since $s \leftrightarrow_f x$, we have $s \in \supp(f^{-1}(u_x))$.
Then, by Lemma \ref{lem4_5}, $[u_y, f(u_s)]=0$, hence $[f^{-1}(u_y), u_s]=0$.
By Lemma \ref{lem4_4} this implies that $\supp(f^{-1} (u_y)) \subset \st_2(s)$.
Since $t \in \supp(f^{-1}(u_y))$ and $t \neq s$ it follows that $t \in \lk_2(s)$, that is, $m_{s,t}=2$.
We show in the same way that, if $m_{s,t}=2$, then $n_{x,y}=2$.
\end{proof}

\begin{lem}\label{lem4_8}
Let $s,t \in S$ and $x,y \in T$ such that $s \leftrightarrow_f x$ and $t \leftrightarrow_f y$.
We have $s \prec_M t$ if and only if $x \prec_N y$.
\end{lem}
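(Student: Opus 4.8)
The plan is to prove only the implication $s\prec_M t\Rightarrow x\prec_N y$; the converse will then follow by interchanging the roles of $(M,S,f)$ and $(N,T,f^{-1})$ (the relation $\leftrightarrow$ and Lemmas \ref{lem4_4} and \ref{lem4_5} are symmetric under this swap). I would first record two remarks. Since $M$ and $N$ are right-angled, the divisibility condition in the definition of $\prec$ is a real constraint only for $m=2$, so $s\prec_M t$ is equivalent to $\lk_2^M(s)\subseteq\st_2^M(t)$, hence to $\st_2^M(s)\subseteq\{s\}\cup\st_2^M(t)$. As $Z(u_r)=\bigoplus_{w\in\st_2^M(r)}\K u_w$ by Lemma \ref{lem4_4}, this is in turn equivalent to $Z(u_s)\subseteq Z(u_t)+\K u_s$, and applying the Lie isomorphism $f$ yields $Z(f(u_s))\subseteq Z(f(u_t))+\K f(u_s)$ whenever $s\prec_M t$. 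The second remark is that if $\beta\in L_{\K,1}[N]$ is nonzero and a basis vector $u_z$ lies in $Z(\beta)$, then $\supp(\beta)\subseteq\st_2^N(z)$: in the description of $Z(\beta)$ from Lemma \ref{lem4_4} the summands have pairwise disjoint supports, so $u_z$ must be a scalar multiple of a single one of them, which forces either $z\in\lk_2^N(\supp\beta)$ or $\{z\}$ to be a connected component of $\Gamma_{\infty,\supp\beta}[N]$, and in both cases every element of $\supp\beta$ distinct from $z$ is $2$-adjacent to $z$.

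Then I would assume $s\prec_M t$, fix $z\in T$ with $z\neq x$, $z\neq y$ and $n_{x,z}=2$ (the case $z=y$ being trivial), and try to show $n_{y,z}=2$. From $s\in\supp(f^{-1}(u_x))$ and $[u_x,u_z]=0$, Lemma \ref{lem4_5} gives $[u_z,f(u_s)]=0$, i.e.\ $[f^{-1}(u_z),u_s]=0$, so $f^{-1}(u_z)\in Z(u_s)$ and therefore $\supp(f^{-1}(u_z))\subseteq\st_2^M(s)\subseteq\{s\}\cup\st_2^M(t)$. When $s\notin\supp(f^{-1}(u_z))$ — in particular whenever $s\in\st_2^M(t)$ — this gives $\supp(f^{-1}(u_z))\subseteq\st_2^M(t)$, so $f^{-1}(u_z)\in Z(u_t)$ and $u_z\in Z(f(u_t))$; since $y\in\supp(f(u_t))\neq\emptyset$, the second remark yields $\supp(f(u_t))\subseteq\st_2^N(z)$, hence $n_{y,z}=2$ because $y\neq z$. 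The only remaining case is $s\in\supp(f^{-1}(u_z))$ with $s\notin\st_2^M(t)$, i.e.\ $s\neq t$ and $m_{s,t}=\infty$, and this is the step where I expect the real work to lie.

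For that case I would first observe that applying Lemma \ref{lem4_5} with each $s'\in\supp(f^{-1}(u_x))$ in place of $s$ gives $\supp(f^{-1}(u_z))\subseteq\bigcap_{s'}\st_2^M(s')$, so (as $s$ lies in this intersection and in $\supp(f^{-1}(u_x))$) $s$ is $2$-adjacent to every other element of $\supp(f^{-1}(u_x))$; hence $\supp(f^{-1}(u_x))\subseteq\st_2^M(s)$, so $f^{-1}(u_x)\in Z(u_s)$ and $u_x\in Z(f(u_s))$. Writing $f^{-1}(u_z)=a_s u_s+\eta$ with $a_s\neq0$ and $\supp(\eta)\subseteq\st_2^M(t)$, we get $f(\eta)\in Z(f(u_t))$ and $u_z=a_s f(u_s)+f(\eta)$. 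By the first remark $u_x\in Z(f(u_s))\subseteq Z(f(u_t))+\K f(u_s)$, so $u_x=\zeta+b f(u_s)$ with $\zeta\in Z(f(u_t))$, $b\in\K$; if $b=0$ then $u_x\in Z(f(u_t))$ forces $\supp(f^{-1}(u_x))\subseteq\st_2^M(t)$, hence $s\in\st_2^M(t)$, contrary to the case hypothesis, so $b\neq0$. Substituting $f(u_s)=\frac1b(u_x-\zeta)$ gives $u_z-\frac{a_s}{b}u_x=f(\eta)-\frac{a_s}{b}\zeta\in Z(f(u_t))$, an element whose support is exactly $\{x,z\}$ (its coefficients at $u_z$ and $u_x$ are $1$ and $-a_s/b\neq0$). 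Finally I would analyse this element via the summand decomposition of $Z(f(u_t))$ from Lemma \ref{lem4_4}: its support $\{x,z\}$ is the disjoint union of the supports of the summands occurring in it, and $\{x,z\}$ cannot be the support of a single one of them (a two-vertex connected component of $\Gamma_{\infty,\supp(f(u_t))}[N]$ would provide an edge, giving $n_{x,z}=\infty$), so $z$ lies in a summand of support $\{z\}$; that summand is either $\K u_z$ with $z\in\lk_2^N(\supp(f(u_t)))$ or a component restriction with $\{z\}$ a connected component of $\Gamma_{\infty,\supp(f(u_t))}[N]$, and in either case $z$ is $2$-adjacent to every element of $\supp(f(u_t))$ distinct from $z$, in particular to $y$, whence $n_{y,z}=2$. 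The delicate points I would expect to take the most care are exactly this last support analysis — using $n_{x,z}=2$ to rule out the configuration where $\{x,z\}$ is a single connected component — and the bookkeeping leading to $b\neq0$; everything else is a routine translation between $\prec$, centralizers, and supports.
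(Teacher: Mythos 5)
Your proof is correct, and its opening coincides with the paper's: for $z\in\lk_2(x)$ you invoke Lemma \ref{lem4_5} to get $f^{-1}(u_z)\in Z(u_s)$, split off the $u_s$-coefficient, and in the benign case ($\supp(f^{-1}(u_z))\subseteq\st_2(t)$) conclude $u_z\in Z(f(u_t))$ and hence $z\in\st_2(y)$, exactly as the paper does. Your treatment of the remaining case ($s\in\supp(f^{-1}(u_z))$, $s\neq t$, $m_{s,t}=\infty$) is genuinely different. The paper shows that configuration is impossible: from $u_z=a\,f(u_s)+f(\gamma)$ it extracts $x\in\supp(f(\gamma))$, proves $[f(\gamma),u_y]=0$ by the mirrored Lemma \ref{lem4_5}, deduces $n_{x,y}=2$, and then Lemma \ref{lem4_7} gives the contradiction $m_{s,t}=2$. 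You never use Lemma \ref{lem4_7}; instead you encode $s\prec_M t$ as the centralizer inclusion $Z(u_s)\subseteq Z(u_t)+\K u_s$ (not made explicit in the paper, and it does transport under $f$ as you claim), obtain $u_x\in Z(f(u_s))$ by intersecting the conclusions of Lemma \ref{lem4_5} over all of $\supp(f^{-1}(u_x))$, and then read off $n_{y,z}=2$ from the Lemma \ref{lem4_4} decomposition applied to $u_z-\frac{a_s}{b}u_x\in Z(f(u_t))$, whose support $\{x,z\}$ cannot be a single two-vertex component because $n_{x,z}=2$. I checked the delicate steps (the argument that $b\neq 0$, and the disjoint-support analysis) and they hold; so you establish the desired conclusion even in a configuration the paper rules out as contradictory. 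What each route buys: the paper's is shorter because Lemma \ref{lem4_7} is already available and the bad case is simply killed, while yours is more self-contained at this point (it only needs $s\in\supp(f^{-1}(u_x))$ and $y\in\supp(f(u_t))$, not the full $\leftrightarrow_f$ hypotheses) and the centralizer reformulation of $\prec_M$ is a reusable observation. One cosmetic point, shared with the paper: you tacitly identify $f(u_s)$ and $f^{-1}(u_x)$ with their degree-one components; this is harmless because $L_{\K,2}=[L_\K,L_\K]$ is preserved by $f$ and degree-two elements are central, but it deserves a sentence.
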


\begin{proof}
We suppose that $s \prec_M t$ and turn to prove that $x \prec_N y$.
If $x=y$, then there is nothing to prove. 
So, we can assume that $x \neq y$.
Let $z \in \lk_2 (x)$.
We have $n_{x,z}=2$ and $s \in \supp(f^{-1}(u_x))$ hence, by Lemma \ref{lem4_5}, $[u_z, f(u_s)]=0$, hence $[f^{-1}(u_z),u_s] = 0$.
By Lemma \ref{lem4_4} it follows that $f^{-1}(u_z)$ is of the form $f^{-1}(u_z) = au_s + \gamma$ where $a \in \K$ and $\supp(\gamma) \subset \lk_2(s)$.
Since $s \prec_M t$, we have $\lk_2(s) \subset \st_2(t)$, hence $[\gamma,u_t]=0$, and therefore $[u_z,f(u_t)] = [f(au_s+\gamma), f(u_t)] = f([au_s+\gamma,u_t]) = a\,f([u_s,u_t])$.

Suppose that $s \neq t$, $m_{s,t}=\infty$ and $a \neq 0$.
We have $t \not\in\supp (\gamma)$ since $\supp(\gamma) \subset \lk_2(s)$.
Let $r \in \supp(\gamma)$.
We have $r \in \lk_2(s) \subset \st_2(t)$ and $r \neq t$, hence $n_{r,t}=2$.
Moreover, $y \in \supp(f(u_t))$, hence, by Lemma \ref{lem4_5}, $[u_r,f^{-1}(u_y)]=0$.
So, $[\gamma, f^{-1}(u_y)]=0$, hence $[f(\gamma),u_y]=0$.
Since $x \in \supp(f(u_s))$ and $z \neq x$, the equality $u_z = a\, f(u_s) + f(\gamma)$ implies that $x \in \supp(f(\gamma))$.
By Lemma \ref{lem4_4} it follows that $x \in \lk_2(y)$, that is, $n_{x,y}=2$.
By Lemma \ref{lem4_7} we deduce that $m_{s,t}=2$, a contradiction.

So, either $s=t$, or $m_{s,t}=2$, or $a=0$.
In all theses cases we have $[u_z,f(u_t)] = a\,f([u_s,u_t]) = 0$.
Since $y \in \supp(f(u_t))$, it follows by Lemma \ref{lem4_4} that $z \in \st_2(y)$.
So, $\lk_2(x) \subset \st_2(y)$, that is, $x \prec_N y$.
We show in the same way that, if $x \prec_N y$, then $s \prec_M t$.
\end{proof}

\begin{corl}\label{corl4_9}
Let $s,t \in S$ and $x,y \in T$ such that $s \leftrightarrow_f x$ and $t \leftrightarrow_f y$.
We have $s \equiv_M t$ if and only if $x \equiv_N y$.
\end{corl}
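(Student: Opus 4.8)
The plan is to read off Corollary \ref{corl4_9} directly from Lemma \ref{lem4_8} together with the definition of the relation $\equiv_M$. Recall that, by definition, $s \equiv_M t$ means $s \prec_M t$ and $t \prec_M s$, and likewise $x \equiv_N y$ means $x \prec_N y$ and $y \prec_N x$. So it suffices to establish the two equivalences $s \prec_M t \Longleftrightarrow x \prec_N y$ and $t \prec_M s \Longleftrightarrow y \prec_N x$.

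First I would apply Lemma \ref{lem4_8} to the data $s \leftrightarrow_f x$ and $t \leftrightarrow_f y$ exactly as given, which yields $s \prec_M t \Longleftrightarrow x \prec_N y$. Then I would apply the same lemma a second time, now reading $t \leftrightarrow_f y$ as the ``$s \leftrightarrow_f x$'' hypothesis and $s \leftrightarrow_f x$ as the ``$t \leftrightarrow_f y$'' hypothesis; this is legitimate because the statement of Lemma \ref{lem4_8} is symmetric in the two pairs and imposes no restriction such as $s \neq t$. This second application yields $t \prec_M s \Longleftrightarrow y \prec_N x$.

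Combining the two equivalences gives the claim: $s \equiv_M t$ holds if and only if $s \prec_M t$ and $t \prec_M s$ both hold, if and only if $x \prec_N y$ and $y \prec_N x$ both hold, if and only if $x \equiv_N y$. There is no genuine obstacle here, since the entire content already sits in Lemma \ref{lem4_8}; the only point worth stating carefully is that Lemma \ref{lem4_8} is invoked with the roles of the two pairs interchanged. The purpose of this corollary in the sequel is that, combined with the existence statement of Lemma \ref{lem4_6}, it lets one push $f$ down to a well-defined correspondence between the vertex sets of $M^r$ and $N^r$, which will eventually be promoted (using Lemma \ref{lem4_7} to match the labels $2$ versus $\infty$) to an isomorphism $M^r \to N^r$ to which Proposition \ref{prop3_4} can be applied.
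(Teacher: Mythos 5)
Your proof is correct and is exactly the argument the paper intends: the corollary is stated there without proof because it follows immediately from Lemma \ref{lem4_8} applied twice (once with the pairs $(s,x)$, $(t,y)$ as given and once with their roles interchanged, which is permitted since the lemma imposes no condition like $s\neq t$), combined with the definition of $\equiv_M$ and $\equiv_N$. Nothing further is needed.
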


Thanks to Corollary \ref{corl4_9} we can define a map $f^r : S^r \to T^r$ as follows. 
Let $C \in S^r$.
We choose $s \in C$ and $x \in T$ such that $s \leftrightarrow_f x$ and we set $f^r(C) = X$, where $X$ is the equivalence class of $x$ for the relation $\equiv_N$.
It is clear that $f^r$ is invertible and $(f^r)^{-1} = (f^{-1})^r$.
Moreover, by Lemma \ref{lem4_7}, $m_{C,D}^r = n_{f^r(C),f^r(D)}^r$ for all $C,D \in S^r$, hence $f^r$ is an isomorphism from $M^r$ to $N^r$.

\begin{lem}\label{lem4_10}
Let $s \in S$ and $x,y \in T$ such that $s \leftrightarrow_f x$ and $y \in \supp (f(u_s))$.
Then $x \prec_N y$.
\end{lem}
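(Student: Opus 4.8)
The plan is to prove directly that $\lk_2(x)\subset\st_2(y)$, which for the right-angled matrix $N$ is exactly the condition $x\prec_N y$: for even $m$ one has $\lk_m(x)=\lk_2(x)$ and $\st_m(y)=\st_2(y)$, while for odd $m$ the inclusion $\lk_m(x)=\emptyset\subset\st_m(y)$ is automatic, so only the $2$-links intervene. Note that I would not use the full strength of $s\leftrightarrow_f x$; only the half $s\in\supp(f^{-1}(u_x))$ is needed, together with the hypothesis $y\in\supp(f(u_s))$.

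So I would fix $z\in\lk_2(x)$, that is, $z\neq x$ and $n_{x,z}=2$. Since $s\in\supp(f^{-1}(u_x))$, Lemma~\ref{lem4_5} (applied with its ``$y$'' taken to be $z$) gives $[u_z,f(u_s)]=0$, so $f(u_s)$ lies in the centralizer $Z(u_z)$. Next I would compute this centralizer via Lemma~\ref{lem4_4} applied to $\alpha=u_z$: here $\supp(u_z)=\{z\}$ is a single vertex, so $\Gamma_{\infty,\{z\}}[N]$ has exactly one connected component, with vertex set $\{z\}$ and associated element $\alpha_1=u_z$, and $\lk_2(\{z\})=\lk_2(z)$; hence $Z(u_z)=\K u_z\oplus\bigl(\bigoplus_{w\in\lk_2(z)}\K u_w\bigr)=\bigoplus_{w\in\st_2(z)}\K u_w$. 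Therefore $[u_z,f(u_s)]=0$ forces $\supp(f(u_s))\subset\st_2(z)$. Since $y\in\supp(f(u_s))$ by hypothesis, $y\in\st_2(z)$, i.e. $y=z$ or $n_{y,z}=2$; either way $z\in\st_2(y)$. As $z\in\lk_2(x)$ was arbitrary, $\lk_2(x)\subset\st_2(y)$, hence $x\prec_N y$.

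I do not anticipate a genuine obstacle here: the statement is essentially a single application of the centralizer description (Lemma~\ref{lem4_4}) fed by Lemma~\ref{lem4_5}, in the same spirit as the proofs of Lemmas~\ref{lem4_7} and~\ref{lem4_8}. The only points requiring a little care are that $\prec_N$ on a right-angled Coxeter matrix is detected by the $2$-links alone, and that the vanishing $[u_z,f(u_s)]=0$ already constrains the whole support of $f(u_s)$ (not just the letter $x$), which is exactly what lets the single hypothesis $y\in\supp(f(u_s))$ carry the conclusion.
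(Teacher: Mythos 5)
Your proposal is correct and follows essentially the same argument as the paper: fix $z\in\lk_2(x)$, use Lemma \ref{lem4_5} to get $[u_z,f(u_s)]=0$, then use Lemma \ref{lem4_4} to conclude $y\in\st_2(z)$, hence $\lk_2(x)\subset\st_2(y)$, i.e. $x\prec_N y$. Your extra remarks (that only half of $s\leftrightarrow_f x$ is used, and that for right-angled matrices $\prec_N$ is detected by the $2$-links alone) are accurate clarifications of points the paper leaves implicit.
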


\begin{proof}
Let $z \in \lk_2(x)$.
We have $n_{x,z}=2$ and $s \in \supp(f^{-1}(u_x))$, hence, by Lemma \ref{lem4_5}, $[u_z, f(u_s)]=0$.
Since $y \in \supp(f(u_s))$, it follows by Lemma \ref{lem4_4} that $y \in \st_2(z)$, hence $z \in \st_2(y)$.
This shows that $\lk_2(x) \subset \st_2(y)$, that is, $x \prec_N y$.
\end{proof}

\begin{lem}\label{lem4_11}
Let $C \in S^r$.
Then $|f^r(C)| = |C|$.
\end{lem}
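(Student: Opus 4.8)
The plan is to show that $f$ must preserve a natural filtration of $L_{\K,1}$ indexed by the $\prec$-upward-closed sets of generators, and then to read off $|f^r(C)|$ as the difference between the sizes of two members of this filtration. Throughout I use the relation $\preceq$ on $S^r$ (resp.\ $T^r$) induced by $\prec_M$ (resp.\ $\prec_N$) on representatives; this is a well-defined partial order by transitivity of $\prec_M$ (Lemma \ref{lem3_1}), and by Lemma \ref{lem4_8} together with the definition of $f^r$ we have $C\preceq D$ if and only if $f^r(C)\preceq f^r(D)$, so $f^r:S^r\to T^r$ is an isomorphism of posets.

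The main step is the following claim. Let $U\subseteq S$ be a union of $\equiv_M$-classes which is $\prec_M$-upward closed (i.e.\ $s\in U$ and $s\prec_M t$ imply $t\in U$), and set $\hat U=\bigcup\{f^r(C)\mid C\in S^r,\ C\subseteq U\}\subseteq T$. Then $\hat U$ is a $\prec_N$-upward-closed union of $\equiv_N$-classes, and $f$ restricts to a $\K$-linear isomorphism from $\bigoplus_{s\in U}\K u_s$ onto $\bigoplus_{x\in\hat U}\K u_x$; in particular $|U|=|\hat U|$. The closure properties of $\hat U$ follow at once from $f^r$ being a poset isomorphism that sends the classes inside $U$ bijectively onto the classes inside $\hat U$. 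For the inclusion $f\big(\bigoplus_{s\in U}\K u_s\big)\subseteq\bigoplus_{x\in\hat U}\K u_x$, fix $s\in U$, choose $x\in T$ with $s\leftrightarrow_f x$ (Lemma \ref{lem4_6}) — so $x$ lies in the class $f^r(C_s)\subseteq\hat U$, where $C_s\subseteq U$ is the class of $s$ — and note that for every $y\in\supp(f(u_s))$ Lemma \ref{lem4_10} gives $x\prec_N y$; hence the class of $y$ lies above $f^r(C_s)$ and is therefore contained in $\hat U$, i.e.\ $y\in\hat U$. Applying this same inclusion to the isomorphism $f^{-1}$ (using $(f^{-1})^r=(f^r)^{-1}$) and to the upward-closed union of classes $\hat U$ yields $f^{-1}\big(\bigoplus_{x\in\hat U}\K u_x\big)\subseteq\bigoplus_{s\in U}\K u_s$, the reverse inclusion; thus $f$ restricts to an isomorphism between these two spaces and $|U|=|\hat U|$.

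To conclude, fix $s_0\in C$ and set $U=\{t\in S\mid s_0\prec_M t\}$ and $U'=\{t\in S\mid s_0\prec_M t\text{ and }t\not\prec_M s_0\}$. Using transitivity of $\prec_M$ one checks that $U$ and $U'$ are $\prec_M$-upward-closed unions of $\equiv_M$-classes and that $U=U'\sqcup C$. The classes contained in $U$ are precisely those $D$ with $C\preceq D$, and the classes contained in $U'$ are the same ones with $C$ removed; since $f^r$ is a poset isomorphism, the classes contained in $\hat U$ are precisely those $D$ with $f^r(C)\preceq D$, and $\hat{U'}=\hat U\setminus f^r(C)$. Hence $|\hat U|=|\hat{U'}|+|f^r(C)|$ while $|U|=|U'|+|C|$, and the claim gives $|U|=|\hat U|$ and $|U'|=|\hat{U'}|$. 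Subtracting, $|f^r(C)|=|\hat U|-|\hat{U'}|=|U|-|U'|=|C|$, as desired.

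The one point that requires care is the support inclusion $\supp(f(u_s))\subseteq\hat U$ for $s\in U$, where Lemma \ref{lem4_10} and the order-compatibility of $f^r$ (Lemma \ref{lem4_8}) must be combined; everything else is bookkeeping with upward-closed sets together with a dimension count.
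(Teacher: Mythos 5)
Your proof is correct and is essentially the paper's own argument: your sets $U$, $U'$, $\hat U$, $\hat{U'}$ coincide with the paper's $U=\{t\mid s\prec_M t\}$, $U_0=\{t\mid s\prec_M t,\ s\not\equiv_M t\}$ and the analogous $V$, $V_0$ for $x\in f^r(C)$, and both proofs use Lemmas \ref{lem4_6}, \ref{lem4_8} and \ref{lem4_10} to show $f$ carries the corresponding spans onto each other and then conclude by a dimension count. The extra layer about upward-closed unions of $\equiv_M$-classes and the poset isomorphism $f^r$ is just a repackaging of the same step, so no gap and no genuinely new route.
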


\begin{proof}
Let $C \in S^r$, $X=f^r(C)$, $s\in C$ and $x \in X$.
Set $U=\{t \in S \mid s \prec_M t \}$, $U_0 = \{t \in S \mid s \prec_M t \text{ and } s \not\equiv_M t \}$, $V=\{y \in T \mid x \prec_N y \}$, $V_0 = \{y \in T \mid x \prec_N y \text{ and } x \not\equiv_N y \}$.
If $E$ is a subset of $S$ (resp. of $T$) we denote by $\langle E \rangle$ the linear subspace of $L_{\K,1}[M]$ (resp. of $L_{\K,1}[N]$) spanned by $\{u_t \mid t \in E\}$.
We have $C=U \setminus U_0$, hence $\dim(\langle U \rangle) - \dim(\langle U_0 \rangle ) = |C|$.
Similarly, $\dim(\langle V \rangle) - \dim(\langle V_0 \rangle) = |X|$.
By Lemma \ref{lem4_8} and Lemma \ref{lem4_10} we have $f(\langle U\rangle)\subset \langle V\rangle$ and $f(\langle U_0\rangle)\subset\langle V_0\rangle$.
Similarly, $f^{-1}(\langle V\rangle)\subset\langle U\rangle$ and $f^{-1}(\langle V_0\rangle)\subset\langle U_0\rangle$, hence  $f(\langle U\rangle)=\langle V\rangle$ and $f(\langle U_0\rangle)=\langle V_0\rangle$.
So, $|C| = \dim(\langle U \rangle) - \dim(\langle U_0 \rangle) = \dim(\langle V \rangle) - \dim (\langle V_0 \rangle) = |X|$.
\end{proof}

\begin{lem}\label{lem4_12}
Let $C \in S^r$.
Then $\lb (f^r(C)) = \lb (C)$.
\end{lem}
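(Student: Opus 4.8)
The plan is to show that the label of $C$ and the label of $X = f^r(C)$ are both detected by the same intrinsic Lie-algebraic data, namely the structure of the subquotient of $L_{\K,1}[M]$ (resp.\ $L_{\K,1}[N]$) associated to the equivalence class. First I would dispose of the case $|C|=1$: by Lemma \ref{lem4_11} we have $|X|=|f^r(C)|=|C|=1$, and both labels are $0$ by definition, so the equality holds. Hence I may assume $|C|\ge 2$, and then $|X|\ge 2$ as well. By Lemma \ref{lem3_2}, there is a well-defined value $m=\lb(C)$ with $m_{s,t}=m$ for all distinct $s,t\in C$, and similarly $n=\lb(X)$ with $n_{x,y}=n$ for all distinct $x,y\in X$; each of $m,n$ lies in $\{2,\infty\}$ since $M,N$ are right-angled. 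So the statement reduces to: $m=2 \iff n=2$ (equivalently $m=\infty \iff n=\infty$).

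The heart of the argument is to pick $s,t\in C$ distinct (possible since $|C|\ge 2$) together with $x,y\in T$ such that $s\leftrightarrow_f x$ and $t\leftrightarrow_f y$ (these exist by Lemma \ref{lem4_6}). By Corollary \ref{corl4_9}, $s\equiv_M t$ forces $x\equiv_N y$, so $x,y\in X$; and $x\ne y$, because if $x=y$ then $s\leftrightarrow_f x$ and $t\leftrightarrow_f x$ would, via the injectivity packaged in Lemma \ref{lem4_11} applied to the class $C$ (the spaces $\langle U\rangle\ominus\langle U_0\rangle$ and $\langle V\rangle\ominus\langle V_0\rangle$ have equal dimension $|C|=|X|$, and $f$ carries one onto the other), be incompatible with $s\ne t$; more directly, distinct basis vectors $u_x,u_y$ of $L_{\K,1}[N]$ have distinct $f^{-1}$-preimages with the matched supports, so $x\ne y$. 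Now $m_{s,t}=\lb(C)$ and $n_{x,y}=\lb(X)$, and Lemma \ref{lem4_7} gives exactly $m_{s,t}=2 \iff n_{x,y}=2$, i.e.\ $\lb(C)=2 \iff \lb(X)=2$. Since the only alternative value is $\infty$, this yields $\lb(C)=\lb(f^r(C))$.

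The step I expect to require the most care is the verification that $x\ne y$ (equivalently, that the chosen representatives on the two sides can be taken distinct inside the single class $X$). The cleanest way is to observe that distinct elements $s,t\in C$ can be matched to distinct $x,y$: choose any $x$ with $s\leftrightarrow_f x$ via Lemma \ref{lem4_6}; by Corollary \ref{corl4_9} every $t'\in C$ with $t'\leftrightarrow_f y'$ has $y'\in X$, and the assignment $t'\mapsto$ (the $\equiv_N$-class is constant, but within $X$) the specific vertex appearing in $\supp(f(u_{t'}))\cap X$ is injective because $f$ is injective and the $u_{t'}$, $t'\in C$, are linearly independent with images whose $X$-parts must already be independent (by the dimension count in Lemma \ref{lem4_11}, $f(\langle C\rangle$-part$)$ lands isomorphically onto the $\langle X\rangle$-part). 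So for $s\ne t$ in $C$ we may genuinely take $x\ne y$ in $X$, and then Lemma \ref{lem4_7} finishes the proof.
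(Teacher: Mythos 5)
Your reduction of the lemma to producing $s,t\in C$, $s\neq t$, and $x,y\in X$, $x\neq y$, with $s\leftrightarrow_f x$ and $t\leftrightarrow_f y$, and then invoking Lemma \ref{lem4_7}, is the right frame (and the $|C|=1$ case is fine). But the justification you give for $x\neq y$ --- precisely the step you flag as delicate --- does not work. The relation $\leftrightarrow_f$ is not a partial bijection: two distinct $s,t\in C$ can both satisfy $s\leftrightarrow_f x$ and $t\leftrightarrow_f x$ for the \emph{same} $x$. For instance, take $S=\{s,t\}$, $T=\{x,y\}$ with $m_{s,t}=n_{x,y}=\infty$ and $\K=\Q$; the map determined by $f(u_s)=u_x+u_y$, $f(u_t)=u_x-u_y$, $f(v_{s,t})=-2\,v_{x,y}$ is an isomorphism of Lie algebras in which every element of $S$ is $\leftrightarrow_f$-related to every element of $T$. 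So nothing forces your chosen $x$ and $y$ to be distinct, and the claim that $x=y$ would be ``incompatible with $s\neq t$'' is false. Your fallback argument has the same defect: linear independence of the $\langle X\rangle$-components of the $f(u_{t'})$, $t'\in C$ (which is indeed what the proof of Lemma \ref{lem4_11} yields), does not make an arbitrary choice of a vertex in $\supp(f(u_{t'}))\cap X$ injective --- in the example both of these supports equal $\{x,y\}$. Even a repaired version via a system of distinct representatives would only give the one-sided containments $x\in\supp(f(u_s))$ and $y\in\supp(f(u_t))$, whereas Lemma \ref{lem4_7} genuinely needs the two-sided relations $s\leftrightarrow_f x$ and $t\leftrightarrow_f y$ (its two implications use the preimage-side, respectively image-side, conditions).

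The gap is closed in the paper by a case distinction rather than an injectivity claim. If there exists $x\in X$ with $s\leftrightarrow_f x$ for \emph{all} $s\in C$, choose $y\in X\setminus\{x\}$ (possible since $|X|=|C|\ge 2$ by Lemma \ref{lem4_11}); by the symmetric form of Lemma \ref{lem4_6} there is $t$ with $t\leftrightarrow_f y$, Corollary \ref{corl4_9} forces $t\in C$, and any $s\in C\setminus\{t\}$ still satisfies $s\leftrightarrow_f x$, giving the required quadruple. If no such $x$ exists, pick $s\in C$ and $x$ with $s\leftrightarrow_f x$ (so $x\in X$), then some $t\in C$ not $\leftrightarrow_f$-related to $x$, and $y$ with $t\leftrightarrow_f y$; automatically $y\neq x$ and $t\neq s$. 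With this replacement for your $x\neq y$ step, the rest of your argument goes through and coincides with the paper's proof.
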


\begin{proof}
Let $C \in S^r$ and let $X = f^r (C)$.
Note that, by Lemma \ref{lem4_11}, $|C| = |X|$.
If $|C| = |X| = 1$, then $\lb (C) = \lb(X) = 0$.
So, we can assume that $|C| = |X| \ge 2$.
Suppose that there exists $x \in X$ such that $s \leftrightarrow_f x$ for all $s \in C$.
Let $y \in X$, $y \neq x$.
By Lemma \ref{lem4_6} there exists $t \in S$ such that $t \leftrightarrow_f y$, and, by Corollary \ref{corl4_9}, we have  $t \in C$.
Choose $s\in C$, $s\neq t$. 
Then $s\neq t$, $x\neq y$, $s\leftrightarrow_fx$ and $t\leftrightarrow_fy$. 
Suppose that there is no $x\in X$ such that $s\leftrightarrow_fx$ for all $s\in C$. 
Then, obviously, there exist $s,t\in C$, $s\neq t$, and $x,y\in X$, $x\neq y$, such that $s\leftrightarrow_fx$ and $t\leftrightarrow_fy$.
So, there always exist $s,t \in C$, $s \neq t$, and $x,y \in X$, $x \neq y$, such that $s \leftrightarrow_f x$ and $t \leftrightarrow_f y$.
Then, by Lemma \ref{lem4_7}, $\lb (C) = m_{s,t} = n_{x,y} = \lb (X)$.
\end{proof}

\begin{proof}[Proof of Theorem \ref{thm4_1}]
Let $f : L_\K [M] \to L_\K [N]$ be an isomorphism. 
Then, by the above, $f$ induces an isomorphism $f^r : M^r \to N^r$ satisfying $|f^r(C)| = |C|$ and $\lb(f^r(C)) = \lb(C)$ for all $C \in S^r$.
We conclude by Proposition \ref{prop3_4} that $M$ and $N$ are isomorphic. 
\end{proof}


\section{Rigidity of $\AA[\EE (c,d)]$}\label{sec5}

Recall that, for $c,d\in\N$ such that $c\ge 1$, $d \ge 2$ and $\gcd (c,d)=1$, we denote by $\EE(c,d)$ the set of even Coxeter matrices $M=(m_{s,t})_{s,t\in S}$ such that $m_{s,t} \in \{2c, \infty\} \cup \{2d^r \mid r \ge 1\}$ for all $s,t \in S$, $s \neq t$.
Our aim in the present section is to prove that the family $\AA [\EE(c,d)]$ is rigid (see Theorem \ref{thm5_12} and Corollary \ref{corl5_13}).
We first prove the case $c=1$ (see Theorem \ref{thm5_1} and Corollary \ref{corl5_2}).

\begin{thm}\label{thm5_1}
Let $d \in \N$, $d \ge 2$, and $M,N \in \EE (1,d)$.
If $L[M]$ is isomorphic to $L[N]$, then $M$ is isomorphic to $N$.
\end{thm}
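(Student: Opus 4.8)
The plan is to run the scheme of Section~\ref{sec4} with the integral Lie algebra $L[M]$ in place of the field version $L_\K[M]$. The point that makes $\EE(1,d)$ special is that every ``small'' label is $2$, so the corresponding bracket $[u_s,u_t]$ vanishes, while every other finite label is of the form $2d^r$, so $[u_s,u_t]=v_{s,t}$ has order \emph{exactly} $d^r$ in $L_2[M]=[L[M],L[M]]$, and the label $\infty$ gives a bracket of infinite order. Thus a pair $(s,t)$ carries two pieces of data visible in $L[M]$: whether $[u_s,u_t]$ is zero, torsion, or of infinite order; and, in the torsion case, its order. I would first recover the coarse data by extension of scalars. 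For a prime $p\mid d$, tensoring with $\F_p$ turns each torsion summand $(\Z/d^r\Z)v_{s,t}$ into a line, so $\F_p\otimes L[M]$ is canonically $L_{\F_p}[\Theta(M)]$, where $\Theta(M)$ is the right-angled Coxeter matrix whose $(s,t)$-entry is $\infty$ if $m_{s,t}\neq 2$ and $2$ otherwise; tensoring with $\Q$ kills all torsion, so $\Q\otimes L[M]$ is $L_\Q[\Xi(M)]$, with $\Xi(M)_{s,t}=\infty$ if $m_{s,t}=\infty$ and $2$ otherwise. In both cases Theorem~\ref{thm4_1} and, more importantly, the lemmas behind it apply.

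The heart of the matter is then to establish the analogues of Lemmas~\ref{lem4_4}--\ref{lem4_12} directly for $L[M]$. The centralizer computation is the obstacle: for $\alpha\in L_1[M]$, $Z(\alpha)$ is no longer given by the clean formula of Lemma~\ref{lem4_4}, because a torsion relation $d^r\mid(a_sb_t-a_tb_s)$ is strictly weaker than $a_sb_t=a_tb_s$, so centralizers over $\Z$ are larger than over a field and the support arguments of Section~\ref{sec4} do not transfer verbatim. I would handle this by computing $Z(\alpha)$ after reducing modulo $p$ (for $p\mid d$) and after $\otimes\,\Q$, where Lemma~\ref{lem4_4} applies to $\Theta(M)$, respectively $\Xi(M)$, and then intersecting the two. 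Running the rest of Section~\ref{sec4}'s chain of lemmas in these two reductions simultaneously, keeping the same support relation $s\leftrightarrow_f x$ (which is defined integrally and whose reductions I would show behave well enough), produces an isomorphism $f^r\colon M^r\to N^r$ between the reduced Coxeter matrices associated to $\prec_M$, with $|C|=|f^r(C)|$ and with $m^r_{C,D}=n^r_{f^r(C),f^r(D)}$ at the level of the coarse type, i.e.\ both are $2$, both are $\infty$, or both are ``$2d^r$ for some $r$''.

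It remains to upgrade this to equality of the labels themselves, i.e.\ to match the exponents of the $2d^r$-labels, and here I would use the order invariant together with the global count $e_r:=|E_{2d^r}(M)|=|E_{2d^r}(N)|$ forced by an isomorphism of the torsion groups $T\big([L[M],L[M]]\big)\cong\bigoplus_{r}(\Z/d^r\Z)^{e_r}$. Concretely, once $s\leftrightarrow_f x$ and $t\leftrightarrow_f y$ with $m_{s,t}=2d^r$, the element $f([u_s,u_t])=[f(u_s),f(u_t)]$ has order $d^r$ in $L_2[N]$, and the control on $\supp(f(u_s))$ and $\supp(f(u_t))$ coming from the adapted lemmas should force $n_{x,y}=2d^r$ as well (applying the same reasoning to $f^{-1}$ for the reverse inequality on exponents). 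With $\lb$ now taking values in $\{0\}\cup\{\infty\}\cup\{2d^r\mid r\ge1\}$ and preserved by $f^r$, Proposition~\ref{prop3_4} yields $M\simeq N$. The main difficulty I anticipate is bookkeeping: producing a \emph{single} vertex correspondence that is simultaneously compatible with the $\F_p$-reduction (which sees $\Theta$), the $\Q$-reduction (which sees $\Xi$), and the torsion orders, given that naive supports can shrink under reduction mod $p$; making $s\leftrightarrow_f x$ the common thread and proving its properties integrally, rather than in either reduction alone, is what I expect to make the argument go through.
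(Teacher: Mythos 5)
Your overall plan---push the Section~\ref{sec4} machinery through scalar reductions and then invoke Proposition~\ref{prop3_4}---is the same as the paper's, but two steps do not go through as written. First, you keep the ordinary integral support relation $s\leftrightarrow_f x$. A coefficient of $f(u_s)$ can be nonzero yet divisible by $p$, so $s\leftrightarrow_f x$ need not survive reduction mod $p$, and the pairs you have selected may then be invisible to the Section~\ref{sec4} lemmas applied to $\F_p\otimes L[M]\simeq L_{\F_p}[M_{(p)}]$ (your $\Theta(M)$). This is exactly the difficulty you flag in your last sentence, and flagging it is not resolving it. The paper's fix is to change the invariant rather than argue harder: it works throughout with the $p$-support $\supp_p(\alpha)=\{s\mid \gcd(p,a_s)=1\}$ and the relation $s\leftrightarrow_{p,f}x$ (existence as in Lemma~\ref{lem4_6} is Lemma~\ref{lem5_3}), which by construction is literally the same relation for $f$, for its mod-$p$ reduction $f_{(p)}$, and for the shifted isomorphism $f^{(d)}$ mentioned below (Lemmas~\ref{lem5_4} and~\ref{lem5_5}). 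Without this, or an equivalent device, the chain of lemmas does not transfer.

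Second, the exponent matching is not established. Your two reductions only detect the coarse trichotomy: $\F_p\otimes L[M]$ sees $M_{(p)}$ (label $2$ versus everything else), and $\mathbb{Q}\otimes L[M]$ sees only $\infty$ versus finite; neither distinguishes $2d^r$ from $2d^{r'}$. The order argument does not close this gap: writing $f(u_s)=\sum_{x'}a_{s,x'}u_{x'}$, one has $f(v_{s,t})=[f(u_s),f(u_t)]=\sum_{x'<y'}(a_{s,x'}a_{t,y'}-a_{s,y'}a_{t,x'})\,v_{x',y'}$, and while this element has order $d^r$, the relation $s\leftrightarrow_f x$, $t\leftrightarrow_f y$ only controls $a_{s,x}$ and $a_{t,y}$ (and, if repaired as above, only mod $p$), not the cross terms $a_{s,y}a_{t,x}$ nor the remaining summands; so ``order $d^r$'' does not localize at the pair $(x,y)$, and the global counts $|E_{2d^r}(M)|=|E_{2d^r}(N)|$ carry no vertex-level information. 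The ingredient your sketch is missing is the paper's shifted Lie algebra $L^{(d)}[M]$, with $L^{(d)}_2[M]=d\,L_2[M]$ and bracket $d[\cdot,\cdot]$, which is isomorphic to $L[M^{(d)}]$ where every exponent drops by one ($2,2d\mapsto 2$, $2d^r\mapsto 2d^{r-1}$, $\infty\mapsto\infty$). An induction on the maximal exponent $\rho$, alternating the $\F_p$-reduction (to handle the label $2$) with this shift (to lower the exponents), gives $m_{s,t}=n_{x,y}$ on the nose (Lemma~\ref{lem5_6}) together with the analogues of Lemmas~\ref{lem4_8}--\ref{lem4_12}, after which Proposition~\ref{prop3_4} applies. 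In this scheme no integral centralizer computation is needed, and your intersection-of-centralizers idea is in any case not developed into actual substitutes for Lemmas~\ref{lem4_5}--\ref{lem4_12}.
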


\begin{corl}\label{corl5_2}
Let $d \in \N$, $d \ge 2$.
Then $\AA [\EE (1,d)]$ is rigid.
\end{corl}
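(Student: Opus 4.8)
The plan is to deduce Theorem \ref{thm5_1} by reducing it to the right-angled case already handled in Theorem \ref{thm4_1}, via a functorial construction that strips off the numerical labels $m_{s,t}=2d^r$ and replaces them by $\infty$. Concretely, given an even Coxeter matrix $M=(m_{s,t})_{s,t\in S}\in\EE(1,d)$, I would associate to it a \emph{graded} object built from $L[M]$ that encodes, for each pair $(s,t)$ with $m_{s,t}=2d^r$, the extra torsion data ``$v_{s,t}$ has order $d^r$'' together with the bracket relation $[u_s,u_t]=v_{s,t}$, while the pairs with $m_{s,t}=2$ and $m_{s,t}=\infty$ behave exactly as in the right-angled situation. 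Since an isomorphism $\varphi:L[M]\to L[N]$ is $\Z$-linear and Lie, it must send $L_1[M]$ to $L_1[N]$ and $L_2[M]$ to $L_2[N]$ (the degree-$1$ part is intrinsically $L[M]/[L[M],L[M]]$-lifted, or one argues via the grading directly), and it must preserve the torsion submodule of $L_2[M]$ as well as its primary decomposition. The $d$-primary component of $L_2[M]$ is $\bigoplus_{r\ge1}\bigoplus_{E_{2d^r}(M)}(\Z/d^r\Z)v_{s,t}$ and the free part is $\bigoplus_{E_\infty(M)}\Z v_{s,t}$; because $\gcd(1,d)=1$ is automatic and there is no $2c=2$-torsion beyond the vanishing brackets, the only labels present are $2$, $\infty$, and powers $2d^r$, so these two pieces are canonically separated by $\varphi$.

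Next I would run the argument of Section \ref{sec4} almost verbatim, but over $\Z$ rather than over a field, keeping track of orders. The key point is that Lemma \ref{lem4_4}, which computes centralizers of degree-$1$ elements, goes through for $L[M]$ once one works with supports and the graph $\Gamma_\infty[M]$ \emph{together with} the analogous graphs $\Gamma_{2d^r}[M]$ recording where the bracket $[u_s,u_t]$ has order dividing $d^r$; equivalently, one can pass to the field $\F_p$ with $p\mid d$ to detect the ``link'' structure for divisibility by $d$, and to $\Q$ to detect the $\infty$-edges, and combine. From the combined data one reconstructs, exactly as in Lemmas \ref{lem4_5}--\ref{lem4_12}, a correspondence $\leftrightarrow_f$ between $S$ and $T$, shows it is compatible with the quasi-orders $\prec_M$ and $\prec_N$ (where now $\prec_M$ uses all finite $m$, i.e.\ all the values $2$ and $2d^r$), induces an isomorphism $f^r:M^r\to N^r$ of the reduced Coxeter matrices, and finally checks $|f^r(C)|=|C|$ and $\lb(f^r(C))=\lb(C)$ for every equivalence class $C\in S^r$. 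The label $\lb(C)$ is recovered because, for $s,t$ in the same class $C$ with $m_{s,t}=2d^r$, the order of the corresponding $v$-element is $d^r$, and this order is an isomorphism invariant. Then Proposition \ref{prop3_4} yields $M\simeq N$.

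The main obstacle, and the place where $\EE(1,d)$ (as opposed to an arbitrary even Coxeter matrix) is genuinely used, is the bookkeeping of the torsion: an isomorphism $\varphi$ sends $v_{s,t}$ to an \emph{element} of $L_2[N]$, not necessarily another basis element, so when I write ``$\varphi(v_{s,t})$ has order $d^r$'' I only learn that the label $n_{x,y}$ of every pair appearing in the support of $\varphi(v_{s,t})$ divides $d^r$ and that at least one of them equals $d^r$. Because every finite label is a power of $d$ and these powers are totally ordered by divisibility, the orders $d^r$ form a chain, which lets me argue inductively on $r$: the elements of order exactly $d$ are detected by $L_2[M]\otimes\F_p$ vanishing after one multiplication, then one quotients out and repeats. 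This is exactly why the common-ratio hypothesis is needed and why the general even case (handled only up to the weaker conclusion of Section \ref{sec6}) fails. Corollary \ref{corl5_2} is then immediate: if $A[M]\simeq A[N]$ with $M,N\in\EE(1,d)$, then $\TGr(A[M])\simeq\TGr(A[N])$, so $L[M]\simeq L[N]$ by Theorem \ref{thm2_1}, hence $M\simeq N$ by Theorem \ref{thm5_1}, i.e.\ $\AA[\EE(1,d)]$ is rigid.
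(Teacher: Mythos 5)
Your final paragraph is exactly the paper's proof of Corollary \ref{corl5_2}: an isomorphism $A[M]\simeq A[N]$ gives $\TGr(A[M])\simeq\TGr(A[N])$, Theorem \ref{thm2_1} identifies these with $L[M]$ and $L[N]$, and Theorem \ref{thm5_1} then yields $M\simeq N$. The lengthy preceding sketch of how one might prove Theorem \ref{thm5_1} is not needed for this corollary, since that theorem is already established in the paper and may simply be cited.
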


\begin{proof}
Let $M, N \in \EE (1,d)$.
If $A[M]$ is isomorphic to $A[N]$, then $\TGr(A[M])$ is isomorphic to $\TGr (A[N])$.
Since, by Theorem \ref{thm2_1}, $\TGr (A[M])$ is isomorphic to $L[M]$ and $\TGr(A[N])$ is isomorphic to $L[N]$, we conclude by Theorem \ref{thm5_1} that $M$ is isomorphic to $N$.
\end{proof}

We turn now to prove Theorem \ref{thm5_1}. 
So, we take two Coxeter matrices $M = (m_{s,t})_{s,t \in S}$ and $N=(n_{x,y})_{x,y \in T}$ lying in $\EE(1,d)$ and an isomorphism $f : L[M] \to L[N]$.
Our aim is to prove that $M$ and $N$ are isomorphic.

We fix a prime number $p$ which divides $d$.
The \emph{$p$-support} of an element $\alpha = \sum_{s \in S} a_s u_s \in L_1[M]$ is defined to be $\supp_p(\alpha) = \{ s \in S \mid \gcd (p,a_s)=1  \}$.
For $s \in S$ and $x \in T$ we set $s \leftrightarrow_{p,f} x$ if $s \in \supp_p(f^{-1} (u_x))$ and $x \in \supp_p (f(u_s))$.

\begin{lem}\label{lem5_3}
Let $s \in S$. 
There exists $x \in T$ such that $s \leftrightarrow_{p,f} x$.
\end{lem}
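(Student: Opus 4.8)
The plan is to copy the proof of Lemma~\ref{lem4_6} almost verbatim, replacing ``support'' with ``$p$-support'' throughout; the only new input needed is that $1$ is a unit modulo $p$.

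First I would make sure that $f$ can be used on degree-one elements. Every generator $v_{s,t}$ of $L_2[M]$ has $m_{s,t}\ge 4$, hence $m_{s,t}\neq 2$, so $[L[M],L[M]]=L_2[M]$, and likewise $[L[N],L[N]]=L_2[N]$. Therefore $f(L_2[M])=L_2[N]$, and composing $f$ with the projections onto $L[M]/L_2[M]\cong L_1[M]$ and $L[N]/L_2[N]\cong L_1[N]$ gives a $\Z$-module isomorphism $L_1[M]\to L_1[N]$, which I will still denote $f$ (its inverse is induced in the same way by $f^{-1}$); this is the sense in which $f(u_s)\in L_1[N]$ and $f^{-1}(u_x)\in L_1[M]$.

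Next, for $s\in S$ write $f(u_s)=\sum_{x\in T}a_{s,x}u_x$ and for $x\in T$ write $f^{-1}(u_x)=\sum_{t\in S}b_{x,t}u_t$. Fix $s\in S$. Then
\[
u_s=(f^{-1}\circ f)(u_s)=\sum_{x\in T}\sum_{t\in S}a_{s,x}b_{x,t}\,u_t ,
\]
and comparing the coefficient of $u_s$ on both sides yields $\sum_{x\in T}a_{s,x}b_{x,s}=1$. Since $p$ does not divide $1$, it cannot divide all the summands, so there is some $x\in T$ with $p\nmid a_{s,x}b_{x,s}$; as $p$ is prime this forces $p\nmid a_{s,x}$ and $p\nmid b_{x,s}$, i.e. $\gcd(p,a_{s,x})=\gcd(p,b_{x,s})=1$. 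Hence $x\in\supp_p(f(u_s))$ and $s\in\supp_p(f^{-1}(u_x))$, that is, $s\leftrightarrow_{p,f}x$, as desired.

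I do not expect any genuine obstacle: the argument is essentially the mod-$p$ shadow of Lemma~\ref{lem4_6}. The only point meriting a line of care is the passage to the degree-one parts, which is handled once and for all by the identification $L_2[\cdot]=[L[\cdot],L[\cdot]]$.
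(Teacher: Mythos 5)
Your proof is correct and follows the same route as the paper, which also just redoes Lemma \ref{lem4_6} mod $p$: expand $f(u_s)$ and $f^{-1}(u_x)$ in the standard bases, read off the coefficient of $u_s$ in $(f^{-1}\circ f)(u_s)=u_s$ to get $\sum_{x}a_{s,x}b_{x,s}=1$, and conclude that some $x$ has both $a_{s,x}$ and $b_{x,s}$ prime to $p$. Your preliminary remark that $f(L_2[M])=L_2[N]$ (since $L_2=[L,L]$), so that one may work with the induced map on degree-one parts, is a point the paper leaves implicit but is entirely consistent with its argument.
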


\begin{proof}
We argue like in the proof of Lemma \ref{lem4_6}.
For $s \in S$ we set $f(u_s) = \sum_{x \in T} a_{s,x} u_x$, and for $x \in T$ we set $f^{-1}(u_x) = \sum_{s \in S} b_{x,s}u_s$.
Let $s \in S$.
Then 
\[
u_s = (f^{-1} \circ f)(u_s) = \sum_{x \in T} \sum_{t \in S} a_{s,x}b_{x,t} u_t\,,
\]
hence there exists $x \in T$ such that $\gcd (p,a_{s,x})=1$ and $\gcd (p,b_{x,s})=1$.
Then $s \in \supp_p(f^{-1}(u_x))$ and $x \in \supp_p (f (u_s))$, that is, $s \leftrightarrow_{p,f} x$.
\end{proof}

We set $L^{(d)}_1[M] = L_1 [M]$, $L^{(d)}_2 [M] = d\,L_2[M]$, and $L^{(d)}[M] = L^{(d)}_1[M] \oplus L^{(d)}_2 [M]$, and we define the Lie bracket $[\cdot ,\cdot ]^{(d)}$ of $L^{(d)}[M]$ as follows. 
For $\alpha, \beta \in L^{(d)}_1 [M]$ we set $[\alpha,\beta]^{(d)} = d\,[\alpha,\beta]$, where $[\cdot,\cdot]$ denotes the Lie bracket of $L[M]$.
On the other hand, we set $[\alpha, \beta]^{(d)}=0$ for all $(\alpha, \beta) \in L_2^{(d)}[M] \times L^{(d)}[M]$.
We define the Lie algebra $L^{(d)}[N]$ in the same way. 
It is easily seen that $f$ induces an isomorphism $f^{(d)} : L^{(d)}[M] \to L^{(d)}[N]$.

Let $M^{(d)}=(m_{s,t}^{(d)})_{s,t \in S}$ be the Coxeter matrix defined as follows. 
\begin{itemize}
\item
$m_{s,t}^{(d)}= 2$ if $m_{s,t}\in \{ 2,2d\}$.
\item
Let $r \in \N$, $r \ge 2$.
Then $m_{s,t}^{(d)}= 2d^{r-1}$ if $m_{s,t}=2d^r$.
\item
$m_{s,t}^{(d)}= \infty$ if $m_{s,t}=\infty$.
\end{itemize}
We define $N^{(d)}$ in the same way. 
Note that $M^{(d)}$ and $N^{(d)}$ are even Coxeter matrices lying in $\EE(1,d)$.
The proof of the following lemma is left to the reader.

\begin{lem}\label{lem5_4}
\begin{itemize}
\item[(1)]
We have $L^{(d)} [M] \simeq L [M^{(d)}]$ and $L^{(d)} [N] \simeq L[N^{(d)}]$.
\item[(2)]
Let $s \in S$ and $x \in T$.
We have $s \leftrightarrow_{p,f} x$ if and only if $s \leftrightarrow_{p,f^{(d)}} x$.
\end{itemize}
\end{lem}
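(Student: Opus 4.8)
The plan is to verify both statements by unwinding the definitions; there is no real idea involved, which is why the authors leave it to the reader, but here is the bookkeeping I would carry out.

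For part (1) I would exhibit an explicit isomorphism $\phi\colon L^{(d)}[M]\to L[M^{(d)}]$ of graded Lie $\Z$-algebras (the case of $N$ being identical). On degree one there is nothing to do, since $L^{(d)}_1[M]=L_1[M]=L_1[M^{(d)}]=\bigoplus_{s\in S}\Z u_s$, so I put $\phi(u_s)=u_s$. On degree two, for a matrix in $\EE(1,d)$ the only nonzero summands of $L_2[M]$ are the cyclic groups $(\Z/d^r\Z)\,v_{s,t}$ over $E_{2d^r}(M)$ with $r\ge1$ and the free groups $\Z\,v_{s,t}$ over $E_\infty(M)$; multiplying by $d$, the summands of $L^{(d)}_2[M]=d\,L_2[M]$ become the cyclic groups $\langle d\,v_{s,t}\rangle$ of order $d^{r-1}$ over $E_{2d^r}(M)$ (trivial when $r=1$) together with the free groups $\Z\,(d\,v_{s,t})$ over $E_\infty(M)$. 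Since the definition of $M^{(d)}$ gives $E_{2d^k}(M^{(d)})=E_{2d^{k+1}}(M)$ for $k\ge1$, $E_\infty(M^{(d)})=E_\infty(M)$, and $E_2(M^{(d)})=E_2(M)\cup E_{2d}(M)$, the nonzero summands of $L_2[M^{(d)}]$ are exactly the $(\Z/d^{r-1}\Z)\,v_{s,t}$ over $E_{2d^r}(M)$ with $r\ge2$ and the $\Z\,v_{s,t}$ over $E_\infty(M)$; so $\phi(d\,v_{s,t})=v_{s,t}$ is a degreewise $\Z$-module isomorphism, the torsion part being the generator-to-generator isomorphism $\langle d\,v_{s,t}\rangle\to(\Z/d^{r-1}\Z)\,v_{s,t}$, well defined because $d\ell\equiv d\ell'\ (\mod d^r)$ implies $\ell\equiv\ell'\ (\mod d^{r-1})$. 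That $\phi$ respects brackets I would check on generators: $\phi([u_s,u_t]^{(d)})=\phi(d\,v_{s,t})=v_{s,t}$ when $m_{s,t}\in\{\infty\}\cup\{2d^r\mid r\ge2\}$ and $=0$ when $m_{s,t}=2d$ (since then $v_{s,t}$ has order $d$ in $L_2[M]$, so $d\,v_{s,t}=0$), which agrees with $[u_s,u_t]_{M^{(d)}}$ because $m^{(d)}_{s,t}=2$ exactly when $m_{s,t}\in\{2,2d\}$; brackets involving the degree-two part vanish on both sides. The one step deserving attention is this collapse of $m_{s,t}=2d$ to $m^{(d)}_{s,t}=2$, matched on the Lie side precisely by $d\,v_{s,t}=0$; everything else is forced.

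For part (2) I would use that $f$ is a graded isomorphism (any isomorphism $L[M]\to L[N]$ carries $[L[M],L[M]]=L_2[M]$ onto $[L[N],L[N]]=L_2[N]$, and may harmlessly be replaced by the graded one agreeing with it modulo $L_2$). Then $f(L^{(d)}[M])=f(L_1[M])\oplus f(d\,L_2[M])=L_1[N]\oplus d\,L_2[N]=L^{(d)}[N]$, and $f^{(d)}$ is simply the restriction of $f$ to $L^{(d)}[M]$: it respects the rescaled bracket since $f([\alpha,\beta]^{(d)})=f(d\,[\alpha,\beta])=d\,[f\alpha,f\beta]=[f\alpha,f\beta]^{(d)}$ for $\alpha,\beta\in L_1[M]$, while brackets with the degree-two part vanish. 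Hence $f^{(d)}(u_s)=f(u_s)$ for every $s\in S$ and $(f^{(d)})^{-1}(u_x)=f^{-1}(u_x)$ for every $x\in T$, so $\supp_p(f^{(d)}(u_s))=\supp_p(f(u_s))$ and $\supp_p((f^{(d)})^{-1}(u_x))=\supp_p(f^{-1}(u_x))$. Since $s\leftrightarrow_{p,f}x$ and $s\leftrightarrow_{p,f^{(d)}}x$ are defined purely in terms of these $p$-supports, the two relations coincide. The only real work is the summand-by-summand identification of $d\,L_2[M]$ with $L_2[M^{(d)}]$ in part (1); there is no conceptual obstacle.
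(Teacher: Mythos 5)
The paper leaves this lemma to the reader, and your verification supplies exactly the intended bookkeeping: the summand-by-summand identification $d\,L_2[M]\cong L_2[M^{(d)}]$ (with the collapse $m_{s,t}=2d\mapsto m^{(d)}_{s,t}=2$ matched on the Lie side by $d\,v_{s,t}=0$, and $E_{2d^k}(M^{(d)})=E_{2d^{k+1}}(M)$, $E_\infty(M^{(d)})=E_\infty(M)$), together with the observation that $f^{(d)}$ agrees with $f$ in degree one, so the $p$-supports and hence the relations $\leftrightarrow_{p,f}$ and $\leftrightarrow_{p,f^{(d)}}$ coincide. Your preliminary remark that $f$ may be assumed graded --- justified because any isomorphism carries $[L[M],L[M]]=L_2[M]$ onto $L_2[N]$, and dropping the degree-two error of $f$ on $L_1[M]$ changes neither the isomorphism property nor the degree-one components used to define the $p$-supports --- correctly handles the one implicit point the paper glosses over (it is needed for the restriction of $f$ to preserve $L^{(d)}$), so the proposal is correct.
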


We define a right-angled Coxeter matrix $M_{(p)}= (m_{(p),s,t})_{s,t \in S}$ as follows. 
\begin{itemize}
\item
$m_{(p),s,t}= 2$ if $m_{s,t}=2$.
\item
$m_{(p),s,t} = \infty$ if $m_{s,t} \in \{2d^r \mid r \in \N\,,\ r \ge 1\} \cup \{\infty\}$.
\end{itemize}
We define the right-angled Coxeter matrix $N_{(p)}$ in the same way. 
Let $\F_p = \Z/p\Z$, and let $f_{(p)} : \F_p \otimes L[M] \to \F_p \otimes L[N]$ be the isomorphism induced by $f$.
Again, the proof of the following is left to the reader.

\begin{lem}\label{lem5_5}
\begin{itemize}
\item[(1)]
We have $\F_p \otimes L [M] \simeq L_{\F_p} [M_{(p)}]$ and $\F_p \otimes L[N] \simeq L_{\F_p} [N_{(p)}]$.
\item[(2)]
Let $s \in S$ and $x \in T$.
We have $s \leftrightarrow_{p,f} x$ if and only if $s \leftrightarrow_{f_{(p)}} x$.
\end{itemize}
\end{lem}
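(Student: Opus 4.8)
The plan is to obtain both statements by unwinding the relevant definitions; the single non-formal ingredient is the elementary observation that, since $p$ divides $d$, we have $\gcd(p,d^r)=p$ for every $r\ge1$.

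For part (1) I would start from the explicit $\Z$-module decomposition $L[M]=L_1[M]\oplus L_2[M]$ and apply $\F_p\otimes-$ summand by summand: one has $\F_p\otimes\Z u_s=\F_p u_s$, $\F_p\otimes\Z v_{s,t}=\F_p v_{s,t}$ for $(s,t)\in E_\infty(M)$, and $\F_p\otimes(\Z/n\Z)v_{s,t}=(\Z/\gcd(n,p)\Z)v_{s,t}$ for $(s,t)\in E_{2n}(M)$. Here the hypothesis $M\in\EE(1,d)$ enters: every finite label of $M$ is of the form $2d^r$ with $r\ge1$, so the only finite $n$ that occur are $n=d^r$, and then $\gcd(n,p)=\gcd(d^r,p)=p$, whence $\F_p\otimes(\Z/d^r\Z)v_{s,t}=\F_p v_{s,t}$. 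Thus $\F_p\otimes L[M]$ is the free $\F_p$-module on $\{u_s\mid s\in S\}\cup\{v_{s,t}\mid s<t,\ m_{s,t}\ne2\}$. By the definition of $M_{(p)}$ one has $E_\infty(M_{(p)})=E_\infty(M)\cup\bigcup_{r\ge1}E_{2d^r}(M)=\{(s,t)\mid s<t,\ m_{s,t}\ne2\}$, so (keeping the same total order on $S$) this basis is exactly the basis of $L_{\F_p}[M_{(p)}]$. It remains to see that the $\F_p$-linear bijection sending $u_s$ to $u_s$ and $v_{s,t}$ to $v_{s,t}$ respects brackets: in $\F_p\otimes L[M]$ the bracket $[u_s,u_t]$ is the class of $v_{s,t}$ when $s<t$ and $m_{s,t}\ne2$, and is $0$ when $m_{s,t}=2$, which is precisely the bracket rule of $L_{\F_p}[M_{(p)}]$ since $m_{(p),s,t}=\infty\iff m_{s,t}\ne2$; and all brackets with the degree-two parts vanish on both sides. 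The same argument, applied verbatim, yields $\F_p\otimes L[N]\simeq L_{\F_p}[N_{(p)}]$.

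For part (2) I would note that, under the identification of part (1), the map $f_{(p)}$ becomes exactly the isomorphism $L_{\F_p}[M_{(p)}]\to L_{\F_p}[N_{(p)}]$ induced by $f$, and that by functoriality of $\F_p\otimes-$ one also has $f_{(p)}^{-1}=(f^{-1})_{(p)}$. For $\alpha=\sum_{s}a_su_s\in L_1[M]$ the image of $\alpha$ in $\F_p\otimes L_1[M]\simeq L_{\F_p,1}[M_{(p)}]$ is $\sum_s\bar a_su_s$, and $\bar a_s\ne0$ in $\F_p$ if and only if $\gcd(p,a_s)=1$; hence the support of the image of $f(u_s)$, computed as in Section \ref{sec4}, equals $\supp_p(f(u_s))$, and likewise the support of the image of $f^{-1}(u_x)$ equals $\supp_p(f^{-1}(u_x))$. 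Since the image of $f(u_s)$ is $f_{(p)}(u_s)$ and the image of $f^{-1}(u_x)$ is $f_{(p)}^{-1}(u_x)$, the condition defining $s\leftrightarrow_{p,f}x$ (namely $s\in\supp_p(f^{-1}(u_x))$ and $x\in\supp_p(f(u_s))$) is literally the condition defining $s\leftrightarrow_{f_{(p)}}x$.

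I expect essentially no obstacle: the statement is formal once one spots the divisibility fact $p\mid d\Rightarrow\gcd(p,d^r)=p$, which is exactly what makes passing to $\F_p$ ``forget'' the exponent $r$ of a label $2d^r$ while still recording its presence --- which is the purpose of the right-angled matrix $M_{(p)}$. The only small points requiring care are not conflating the case $n=1$ (label $2$, contributing no $v_{s,t}$) with the cases $n=d^r$, and the compatibility of $f_{(p)}$ with inverses, which is immediate from functoriality of base change.
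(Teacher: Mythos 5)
Your proof is correct and is exactly the intended argument: the paper leaves this lemma to the reader, and the natural proof is precisely your unwinding of the definitions together with the observation that $p\mid d$ forces $\F_p\otimes(\Z/d^r\Z)\simeq\F_p$, so that tensoring with $\F_p$ forgets the exponent of a label $2d^r$ but records its presence, matching $L_{\F_p}[M_{(p)}]$; part (2) then follows by reduction of coefficients mod $p$ and functoriality of base change. No gaps.
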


\begin{lem}\label{lem5_6}
Let $s,t \in S$, $s \neq t$, and $x,y \in T$, $x \neq y$, such that $s \leftrightarrow_{p,f} x$ and $t \leftrightarrow_{p,f} y$.
Then $m_{s,t} = n_{x,y}$.
\end{lem}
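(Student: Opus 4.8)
The plan is to bootstrap from the right-angled case, using the two reductions $M\mapsto M_{(p)}$ and $M\mapsto M^{(d)}$ already introduced. First I would settle the single value $2$: by Lemma~\ref{lem5_5}, $f$ induces an isomorphism $f_{(p)}\colon L_{\F_p}[M_{(p)}]\to L_{\F_p}[N_{(p)}]$ with $s\leftrightarrow_{f_{(p)}}x$ and $t\leftrightarrow_{f_{(p)}}y$. Since $M_{(p)}$ and $N_{(p)}$ are right-angled and $s\ne t$, $x\ne y$, Lemma~\ref{lem4_7} gives $m_{(p),s,t}=2\iff n_{(p),x,y}=2$, which by the definition of $M_{(p)}$ and $N_{(p)}$ means precisely $m_{s,t}=2\iff n_{x,y}=2$. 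In particular the lemma holds already when $m_{s,t}=2$ or $n_{x,y}=2$, so from now on I may assume $m_{s,t},n_{x,y}\in\{2d^r\mid r\ge 1\}\cup\{\infty\}$.

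Next I would peel off the powers of $d$. By Lemma~\ref{lem5_4}, $f$ induces an isomorphism $L[M^{(d)}]\to L[N^{(d)}]$ between Coxeter matrices still lying in $\EE(1,d)$, and the relations $s\leftrightarrow_{p}x$, $t\leftrightarrow_{p}y$ persist for this induced isomorphism; iterating, for every $k\ge 0$ one obtains an isomorphism between $L$ of the $k$-fold reduction of $M$ and $L$ of the $k$-fold reduction of $N$, to which the first paragraph still applies. Moreover the $(s,t)$-entry of the $k$-fold reduction of $M$ equals $2$ when $m_{s,t}=2d^j$ with $j\le k$, equals $2d^{j-k}$ when $m_{s,t}=2d^j$ with $j>k$, and equals $\infty$ when $m_{s,t}=\infty$ (and likewise for $N$). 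The conclusion is then forced. If $m_{s,t}=\infty$ and $n_{x,y}=2d^j$, then at level $k=j$ the $(s,t)$-entry on the $M$-side is $\infty\ne 2$ while on the $N$-side it is $2$, contradicting the first paragraph at level $k$; hence $m_{s,t}=\infty\Rightarrow n_{x,y}=\infty$, and the symmetric argument applied to $f^{-1}$ (note that $s\leftrightarrow_{p,f}x$ is a symmetric condition, so $x\leftrightarrow_{p,f^{-1}}s$) gives $n_{x,y}=\infty\Rightarrow m_{s,t}=\infty$. Finally, if $m_{s,t}=2d^j$ and $n_{x,y}=2d^{j'}$, say with $j\le j'$, then at level $k=j$ the $M$-side has $(s,t)$-entry $2$, so by the first paragraph the $N$-side has $(s,t)$-entry $2$, i.e. $2d^{j'-j}=2$, whence $j=j'$ and $m_{s,t}=n_{x,y}$.

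The genuinely new content is confined to the first paragraph, where reducing mod $p$ collapses $\EE(1,d)$ onto a right-angled matrix so that Lemma~\ref{lem4_7} applies; everything afterwards is bookkeeping with the reduction $M\mapsto M^{(d)}$. The point to keep in mind — and the reason a single reduction does not suffice — is that $p\mid d$ forces $\F_p\otimes(\Z/d^r\Z)\ne 0$, so every generator $v_{s,t}$ with $m_{s,t}=2d^r$ survives modulo $p$ and $M_{(p)}$ carries an edge exactly where $m_{s,t}\ne 2$ (this is what makes Lemma~\ref{lem5_5} true); consequently the mod-$p$ picture detects only the value $2$, and separating the various powers $2d^r$ from each other and from $\infty$ is precisely the role of the iterated reduction $M\mapsto M^{(d)}$.
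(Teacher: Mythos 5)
Your proof is correct and follows essentially the same route as the paper: the value $2$ is detected through the mod-$p$ right-angled reduction (Lemma \ref{lem5_5} plus Lemma \ref{lem4_7}), and the powers of $d$ are separated by repeatedly applying the reduction $M\mapsto M^{(d)}$ via Lemma \ref{lem5_4}. The only difference is presentational: the paper packages your explicit iteration and case analysis as an induction on the largest exponent $\rho$ occurring in $M$ and $N$, applying the inductive hypothesis once to $f^{(d)}$.
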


\begin{proof}
Let $\rho \in \N$ such that $E_{2d^\rho} (M) \neq \emptyset$ or $E_{2d^\rho} (N) \neq \emptyset$, and $E_{2d^k} (M) = E_{2d^k}(N) = \emptyset$ for all $k>\rho$ (we set $\rho=0$ if $m_{s,t}=\infty$ for all $s,t \in S$, $s \neq t$, and $n_{x,y} = \infty$ for all $x,y \in T$, $x \neq y$).
We argue by induction on $\rho$.
If $\rho=0$, then $M$ and $N$ are both right-angled Coxeter matrices and the result follows from Lemma \ref{lem4_7}.
So, we can assume that $\rho \ge 1$ and that the inductive hypothesis holds.
Suppose that $m_{s,t}=2$.
Then $m_{(p),s,t}=2$ and, by Lemma \ref{lem5_5}, $s \leftrightarrow_{f_{(p)}} x$ and $t \leftrightarrow_{f_{(p)}} y$, hence, by Lemma \ref{lem4_7}, $n_{(p),x,y} = 2$, and therefore $n_{x,y}=2$.
We show in the same way that, if $n_{x,y}=2$, then $m_{s,t}=2$.
So, we can assume that $m_{s,t} \ge 2d$ and $n_{x,y}\ge 2d$.
By Lemma \ref{lem5_4}, $s \leftrightarrow_{p,f^{(d)}} x$ and $t \leftrightarrow_{p,f^{(d)}} y$, hence, by the inductive hypothesis, $m_{s,t}^{(d)}=n_{x,y}^{(d)}$.
If $m_{s,t}^{(d)}=n_{x,y}^{(d)} = \infty$, then $m_{s,t}=n_{x,y}=\infty$.
If $m_{s,t}^{(d)}=n_{x,y}^{(d)} < \infty$, then $m_{s,t}=n_{x,y}=d\,m_{s,t}^{(d)}=d\,n_{x,y}^{(d)}$, since $m_{s,t} \ge 2d$ and $n_{x,y}\ge 2d$.
\end{proof}

\begin{lem}\label{lem5_7}
Let $s,t \in S$ and $x,y \in T$ such that $s \leftrightarrow_{p,f} x$ and $t \leftrightarrow_{p,f} y$.
We have $s \prec_M t$ if and only if $x \prec_N y$.
\end{lem}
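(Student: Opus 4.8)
The plan is to run the same induction on $\rho$ that underlies the proof of Lemma \ref{lem5_6}. Fix $\rho\in\N$ so that $E_{2d^\rho}(M)\neq\emptyset$ or $E_{2d^\rho}(N)\neq\emptyset$ while $E_{2d^k}(M)=E_{2d^k}(N)=\emptyset$ for all $k>\rho$ (with $\rho=0$ exactly when both $M$ and $N$ are right-angled), and induct on $\rho$. When $\rho=0$ the matrices are right-angled and the assertion is precisely Lemma \ref{lem4_8}, so I would assume $\rho\ge1$ and that the inductive hypothesis holds for all pairs of matrices in $\EE(1,d)$ with smaller parameter.

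The heart of the argument is a combinatorial factorization of the quasi-order $\prec_M$ through the two auxiliary matrices $M_{(p)}$ and $M^{(d)}$. Because $M$ lies in $\EE(1,d)$, every entry $m_{s,r}$ is $2$, of the form $2d^j$, or $\infty$; hence, for any finite $m\ge2$, the link $\lk_m(s)$ equals $\lk_{2d^k}(s)$ where $k\ge0$ is the largest integer with $2d^k$ dividing $m$ (convention $2d^0=2$), and equals $\emptyset$ when $m$ is odd, with the same statement for $\st_m(t)$. Consequently $s\prec_M t$ holds if and only if $\lk_2(s)\subset\st_2(t)$ and $\lk_{2d^k}(s)\subset\st_{2d^k}(t)$ for all $k\ge1$. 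The first condition is exactly $s\prec_{M_{(p)}}t$ (since $\lk_2^{M_{(p)}}(s)=\{r\mid m_{s,r}=2\}=\lk_2^M(s)$, and for a right-angled matrix $\prec$ is governed by the $m=2$ link alone), and the second, after reindexing $k\mapsto k-1$ and using $\lk_{2d^{k}}^{M^{(d)}}(s)=\lk_{2d^{k+1}}^{M}(s)$, is exactly $s\prec_{M^{(d)}}t$. Thus
\[
s\prec_M t\ \Longleftrightarrow\ \big(s\prec_{M_{(p)}}t\ \text{and}\ s\prec_{M^{(d)}}t\big)\,,
\]
and the analogous equivalence holds for $N$, $x$, $y$.

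It then remains to transport each of the two pieces across $f$. For the right-angled piece, Lemma \ref{lem5_5}(2) gives $s\leftrightarrow_{f_{(p)}}x$ and $t\leftrightarrow_{f_{(p)}}y$, Lemma \ref{lem5_5}(1) makes $f_{(p)}$ an isomorphism $L_{\F_p}[M_{(p)}]\to L_{\F_p}[N_{(p)}]$, and Lemma \ref{lem4_8} then yields $s\prec_{M_{(p)}}t\Leftrightarrow x\prec_{N_{(p)}}y$. For the other piece, Lemma \ref{lem5_4}(2) gives $s\leftrightarrow_{p,f^{(d)}}x$ and $t\leftrightarrow_{p,f^{(d)}}y$, while Lemma \ref{lem5_4}(1) turns $f^{(d)}$ into an isomorphism $L[M^{(d)}]\to L[N^{(d)}]$ between matrices of $\EE(1,d)$ whose parameter is $\rho-1$ (because $E_{2d^k}(M^{(d)})=E_{2d^{k+1}}(M)$ for $k\ge1$); the inductive hypothesis then gives $s\prec_{M^{(d)}}t\Leftrightarrow x\prec_{N^{(d)}}y$. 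Combining the two equivalences with the factorizations above produces $s\prec_M t\Leftrightarrow x\prec_N y$.

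The main obstacle is the combinatorial factorization in the second paragraph: one must verify carefully that, thanks to $M\in\EE(1,d)$, checking $\lk_m(s)\subset\st_m(t)$ for \emph{all} finite $m$ reduces to checking it for $m\in\{2\}\cup\{2d^k\mid k\ge1\}$, and that these conditions match, line by line, the defining rules of $M_{(p)}$ (keep the $m_{s,t}=2$ edges, send the rest to $\infty$) and $M^{(d)}$ (collapse $2$ and $2d$, and drop one power of $d$). This is also precisely the place where the hypothesis $c=1$ is essential, since it is what keeps $M^{(d)}$ inside $\EE(1,d)$ and allows the induction to close; everything else is routine bookkeeping imported from Lemmas \ref{lem5_4} and \ref{lem5_5}.
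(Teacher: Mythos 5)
Your proof is correct and follows essentially the same route as the paper: the same induction on $\rho$, with the $m=2$ condition handled through $M_{(p)}$ via Lemma \ref{lem5_5} and Lemma \ref{lem4_8}, and the $m=2d^{k+1}$ conditions handled through $M^{(d)}$ via Lemma \ref{lem5_4} and the inductive hypothesis. Your explicit factorization $s\prec_M t\Leftrightarrow(s\prec_{M_{(p)}}t \text{ and } s\prec_{M^{(d)}}t)$ is just a symmetric repackaging of the paper's case-by-case verification that $\lk_m(x)\subset\st_m(y)$ for $m\in\{2d^k\mid 0\le k\le\rho\}$.
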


\begin{proof}
Let $\rho \in \N$ such that $E_{2d^\rho}(M) \neq \emptyset$ or $E_{2 d^\rho} (N) \neq \emptyset$, and $E_{2d^k}(M) = E_{2d^k} (N) = \emptyset$ for all $k > \rho$ (again, we set $\rho=0$ if $m_{s,t}=\infty$ for all $s,t \in S$, $s \neq t$, and $n_{x,y}=\infty$ for all $x,y\in T$, $x\neq y$).
We argue by induction on $\rho$.
Assume that $\rho=0$, that is, $M$ and $N$ are both right-angled Coxeter matrices. 
Then $f$ induces an isomorphism $f_{\F_p}:L_{\F_p}[M]=\F_p\otimes L[M]\to\F_p\otimes L[N]=L_{\F_p}[N]$, hence, by Lemma \ref{lem4_8}, we have $s \prec_M t$ if and only if $x \prec_N y$.

Now, assume that $\rho \ge 1$ and that the inductive hypothesis holds. 
Suppose that $s \prec_M t$.
We take $m \in \N$, $m \ge 2$, and we prove that $\lk_m(x) \subset \st_m(y)$.
This will prove that $x \prec_N y$.
Since $M,N \in \EE(1,d)$, we can assume that $m \in \{ 2d^k \mid 0 \le k \le \rho \}$.

Suppose that $m=2$.
For $r \in S$ we denote by $\lk_{(p),2} (r)$ the $2$-link of $r$ and by $\st_{(p),2} (r)$ the $2$-star of $r$ with respect to $M_{(p)}$.
Similarly, for $z \in T$ we denote by $\lk_{(p),2} (z)$ the $2$-link of $z$ and by $\st_{(p),2} (z)$ the $2$-star of $z$ with respect to $N_{(p)}$.
Observe that $\lk_2(s) = \lk_{(p),2} (s)$ and $\st_2(t) = \st_{(p),2} (t)$, hence $\lk_{(p),2} (s) \subset \st_{(p),2} (t)$, that is, $s \prec_{M_{(p)}} t$.
Moreover, by Lemma \ref{lem5_5}, $s \leftrightarrow_{f_{(p)}} x$ and $t \leftrightarrow_{f_{(p)}} y$, hence, by Lemma \ref{lem4_8}, $x \prec_{N_{(p)}} y$.
This means that $\lk_2 (x) = \lk_{(p),2} (x) \subset \st_{(p),2} (y) = \st_2 (y)$.

Assume that $m \ge 2d$, that is, $m=2d^{k+1}$ for some $k\in\{0,\dots,\rho-1\}$.
For $r \in S$ and $0 \le k \le \rho-1$ we denote by $\lk^{(d)}_{2d^k} (r)$ the $2d^k$-link of $r$ and by $\st^{(d)}_{2d^k} (r)$ the $2d^k$-star of $r$ with respect to $M^{(d)}$.
Similarly, for $z \in T$ and $0 \le k \le \rho-1$ we denote by $\lk^{(d)}_{2d^k} (z)$ the $2d^k$-link of $z$ and by $\st^{(d)}_{2d^k} (z)$ the $2d^k$-star of $z$ with respect to $N^{(d)}$.
Observe that $\lk_{2d^{k+1}}(s) = \lk^{(d)}_{2d^k} (s)$ and $\st_{2d^{k+1}} (t) = \st^{(d)}_{2d^k} (t)$ for all $0 \le k \le \rho-1$, hence $\lk^{(d)}_{2d^k} (s) \subset \st^{(d)}_{2d^k} (t)$ for all $0 \le k \le \rho-1$, and therefore $s \prec_{M^{(d)}} t$.
Moreover, by Lemma \ref{lem5_4}, $s \leftrightarrow_{p,f^{(d)}} x$ and $t \leftrightarrow_{p,f^{(d)}} y$, hence, by the inductive hypothesis, $x \prec_{N^{(d)}} y$.
This implies that $\lk_{2d^{k+1}} (x) = \lk^{(d)}_{2d^k} (x) \subset \st^{(d)}_{2d^k} (y) = \st_{2d^{k+1}} (y)$ for all $0 \le k \le \rho-1$.
So, $\lk_m(x) \subset \st_m(y)$.
\end{proof}

\begin{corl}\label{corl5_8}
Let $s,t \in S$ and $x,y \in T$ such that $s \leftrightarrow_{p,f} x$ and $t \leftrightarrow_{p,f} y$.
We have $s \equiv_M t$ if and only if $x \equiv_N y$.
\end{corl}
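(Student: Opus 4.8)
The plan is to deduce Corollary \ref{corl5_8} immediately from Lemma \ref{lem5_7}, since by construction the equivalence relation $\equiv_M$ is nothing but the symmetrization of the quasi-order $\prec_M$: recall that $s \equiv_M t$ holds precisely when both $s \prec_M t$ and $t \prec_M s$, and similarly $x \equiv_N y$ holds precisely when both $x \prec_N y$ and $y \prec_N x$.

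Concretely, I would apply Lemma \ref{lem5_7} twice. Applied to the pairs $(s,x)$ and $(t,y)$ — which satisfy $s \leftrightarrow_{p,f} x$ and $t \leftrightarrow_{p,f} y$ by hypothesis — the lemma gives the equivalence $s \prec_M t \Longleftrightarrow x \prec_N y$. Applied again, now with the roles of the two pairs interchanged (taking ``$s$'' of the lemma to be our $t$, ``$t$'' to be our $s$, ``$x$'' to be our $y$, and ``$y$'' to be our $x$), it gives $t \prec_M s \Longleftrightarrow y \prec_N x$. Conjoining these two equivalences yields exactly $s \equiv_M t \Longleftrightarrow x \equiv_N y$, which is the assertion.

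There is essentially no obstacle here: the only point to observe is that the hypotheses of Lemma \ref{lem5_7} are symmetric in the two pairs $(s,x)$ and $(t,y)$, so the second application is legitimate. This is the verbatim analogue, in the present ``$\EE(1,d)$'' setting, of the passage from Lemma \ref{lem4_8} to Corollary \ref{corl4_9} in the right-angled case, and the argument is word for word the same.
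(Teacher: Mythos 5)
Your argument is correct and is exactly what the paper intends: Corollary \ref{corl5_8} is stated without proof precisely because it follows from two applications of Lemma \ref{lem5_7} (once to the pairs $(s,x)$, $(t,y)$ and once with the roles exchanged), together with the definition of $\equiv_M$ and $\equiv_N$ as the symmetrizations of $\prec_M$ and $\prec_N$. Nothing is missing.
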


Thanks to Corollary \ref{corl5_8} we can define a map $f^r : S^r \to T^r$ as follows.
Let $C \in S^r$.
We choose $s \in C$ and $x \in T$ such that $s \leftrightarrow_{p,f} x$ and we set $f^r(C) = X$, where $X$ is the equivalence class of $x$ for the relation $\equiv_N$.
It is clear that $f^r$ is invertible and $(f^r)^{-1} = (f^{-1})^r$.
Moreover, by Lemma \ref{lem5_6}, $m_{C,D}^r = n_{f^r(C),f^r(D)}^r$ for all $C,D \in S^r$, hence $f^r$ determines an isomorphism from $M^r$ to $N^r$.

\begin{lem}\label{lem5_9}
Let $s \in S$ and $x,y \in T$ such that $s \leftrightarrow_{p,f} x$ and $y \in \supp_p (f(u_s))$.
Then $x \prec_N y$.
\end{lem}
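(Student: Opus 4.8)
The plan is to mimic the proof of Lemma~\ref{lem5_7}, arguing by induction on the integer $\rho$ (the largest $k$ for which $E_{2d^k}(M)$ or $E_{2d^k}(N)$ is nonempty, with $\rho=0$ when $M$ and $N$ are both right-angled). Since $M,N\in\EE(1,d)$, proving $x\prec_N y$ amounts, exactly as in that lemma, to checking $\lk_m(x)\subset\st_m(y)$ only for $m\in\{2d^k\mid 0\le k\le\rho\}$, so this is what I would do by induction on $\rho$.

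For the base case $\rho=0$ both $M$ and $N$ are right-angled. I would tensor $f$ with $\F_p$ to get an isomorphism $f_{\F_p}:\F_p\otimes L[M]\to\F_p\otimes L[N]$, observe that $\supp_p(f(u_s))=\supp(f_{\F_p}(u_s))$ (a coefficient is a unit modulo $p$ precisely when it survives in $\F_p$), so $y\in\supp(f_{\F_p}(u_s))$, and that $s\leftrightarrow_{f_{\F_p}}x$ as in Lemma~\ref{lem5_5}(2); then Lemma~\ref{lem4_10} applied to $f_{\F_p}$ over $\F_p$ gives $x\prec_N y$.

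For the inductive step $\rho\ge1$, I would fix $m=2d^k$ with $0\le k\le\rho$ and argue by cases. If $k=0$, I pass to the right-angled matrices $M_{(p)},N_{(p)}$ and the induced isomorphism $f_{(p)}$: by Lemma~\ref{lem5_5}(2) we have $s\leftrightarrow_{f_{(p)}}x$, and $y\in\supp_p(f(u_s))=\supp(f_{(p)}(u_s))$ as above, so Lemma~\ref{lem4_10} gives $x\prec_{N_{(p)}}y$, that is $\lk_{(p),2}(x)\subset\st_{(p),2}(y)$; since $\lk_2(x)=\lk_{(p),2}(x)$ and $\st_2(y)=\st_{(p),2}(y)$ this reads $\lk_2(x)\subset\st_2(y)$. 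If $k\ge1$, writing $m=2d^{k'+1}$ with $0\le k'\le\rho-1$, I pass to $M^{(d)},N^{(d)}\in\EE(1,d)$ (which have strictly smaller $\rho$) and the induced isomorphism $f^{(d)}$, which agrees with $f$ on the degree-one part, so $\supp_p(f^{(d)}(u_s))=\supp_p(f(u_s))\ni y$ and $s\leftrightarrow_{p,f^{(d)}}x$ by Lemma~\ref{lem5_4}(2); applying the inductive hypothesis through the identification $L^{(d)}[M]\simeq L[M^{(d)}]$ of Lemma~\ref{lem5_4}(1) gives $x\prec_{N^{(d)}}y$, hence $\lk^{(d)}_{2d^{k'}}(x)\subset\st^{(d)}_{2d^{k'}}(y)$, which becomes $\lk_m(x)\subset\st_m(y)$ via $\lk_{2d^{k'+1}}(x)=\lk^{(d)}_{2d^{k'}}(x)$ and $\st_{2d^{k'+1}}(y)=\st^{(d)}_{2d^{k'}}(y)$. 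Collecting the two cases over all $k$ yields $x\prec_N y$.

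I expect the only delicate point — a mild obstacle rather than a genuine difficulty — to be the bookkeeping showing that the hypotheses survive the two reductions: that $f_{\F_p}$, $f_{(p)}$ and $f^{(d)}$ all restrict to $f$ on the degree-one components (so $p$-supports, and in particular the membership $y\in\supp_p(f(u_s))$, are preserved), that the correspondences $\leftrightarrow$ transfer as recorded in Lemmas~\ref{lem5_4}(2) and~\ref{lem5_5}(2), and that links and stars behave as $\lk_2=\lk_{(p),2}$ under the $p$-reduction and $\lk_{2d^{k+1}}=\lk^{(d)}_{2d^k}$ under the $d$-reduction. All of these are already exploited in the proof of Lemma~\ref{lem5_7}, so the argument goes through by the same template.
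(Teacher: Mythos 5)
Your proposal is correct and follows essentially the same route as the paper's own proof: induction on $\rho$, reducing the case $m=2$ to the right-angled matrices $M_{(p)},N_{(p)}$ via Lemma \ref{lem5_5} and Lemma \ref{lem4_10}, and the case $m=2d^{k+1}$ to $M^{(d)},N^{(d)}$ via Lemma \ref{lem5_4} and the inductive hypothesis, with the same link/star identifications. Your extra remarks on why $y\in\supp_p(f(u_s))$ survives the reductions only make explicit what the paper leaves implicit.
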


\begin{proof}
Let $\rho \in \N$ such that $E_{2d^\rho}(M) \neq \emptyset$ or $E_{2 d^\rho} (N) \neq \emptyset$, and $E_{2d^k}(M) = E_{2d^k} (N) = \emptyset$ for all $k > \rho$ (and, as ever, $\rho=0$ if $m_{s,t} = \infty$ for all $s,t \in S$, $s \neq t$, and $n_{x,y}=\infty$ for all $x,y\in T$, $x\neq y$).
We argue by induction on $\rho$.
Suppose that $\rho=0$, that is, $M$ and $N$ are both right-angled Coxeter matrices. 
Then $f$ induces an isomorphism $f_{(p)} : L_{\F_p} [M] = \F_p \otimes L[M] \to \F_p \otimes L[N] = L_{\F_p} [N]$ hence, by Lemma \ref{lem4_10}, $x \prec_N y$.

Now, we assume that $\rho \ge 1$ and that the inductive hypothesis holds. 
We take $m \in \N$, $m \ge 2$, and we show that $\lk_m(x) \subset \st_m(y)$.
This will show that $x \prec_N y$.
Again, since $M$ and $N$ lie in $\EE(1,d)$, we can suppose that $m \in \{ 2d^k \mid 0 \le k \le \rho \}$.

Suppose first that $m=2$.
For $z \in T$ we denote by $\lk_{(p),2} (z)$ the $2$-link of $z$ and by $\st_{(p),2} (z)$ the $2$-star of $z$ with respect to $N_{(p)}$.
By Lemma \ref{lem5_5}, $s \leftrightarrow_{f_{(p)}} x$ hence, by Lemma \ref{lem4_10}, $x \prec_{N_{(p)}} y$.
This implies that $\lk_2 (x) = \lk_{(p),2} (x) \subset \st_{(p),2} (y) = \st_2 (y)$.

Now, suppose that $m \ge 2d$, that is, $m=2d^{k+1}$ for some $k\in\{0,\dots,\rho-1\}$.
For $z \in T$ and $0 \le k \le \rho-1$ we denote by $\lk^{(d)}_{2d^k} (z)$ the $2d^k$-link of $z$ and by $\st^{(d)}_{2d^k} (z)$ the $2d^k$-star of $z$ with respect to $N^{(d)}$.
By Lemma \ref{lem5_4}, $s \leftrightarrow_{p,f^{(d)}} x$ hence, by the inductive hypothesis, $x \prec_{N^{(d)}} y$.
This implies that $\lk_{2d^{k+1}} (x) = \lk^{(d)}_{2d^k} (x) \subset \st^{(d)}_{2d^k} (y) = \st_{2d^{k+1}} (y)$ for all $0 \le k \le \rho-1$.
So, $\lk_m(x) \subset \st_m(y)$.
\end{proof}

\begin{lem}\label{lem5_10}
Let $C \in S^r$.
Then $|f^r(C)| = |C|$.
\end{lem}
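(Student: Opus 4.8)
The plan is to transcribe the proof of Lemma~\ref{lem4_11}, working over the field $\F_p$ so that the $p$-supports behave like ordinary supports. First I would fix $C\in S^r$, set $X=f^r(C)$, and choose $s\in C$ and $x\in X$ with $s\leftrightarrow_{p,f}x$ (such a pair exists by the definition of $f^r$). Then introduce the four sets $U=\{t\in S\mid s\prec_M t\}$, $U_0=\{t\in S\mid s\prec_M t\text{ and }s\not\equiv_M t\}$, $V=\{y\in T\mid x\prec_N y\}$ and $V_0=\{y\in T\mid x\prec_N y\text{ and }x\not\equiv_N y\}$. Since $C=U\setminus U_0$ and $X=V\setminus V_0$, we have $|C|=|U|-|U_0|$ and $|X|=|V|-|V_0|$, so it suffices to prove $|U|=|V|$ and $|U_0|=|V_0|$.

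Next I would pass to $\F_p$: let $f_{(p)}\colon\F_p\otimes L[M]\to\F_p\otimes L[N]$ be the induced isomorphism. By Lemma~\ref{lem5_5}(2), $s\leftrightarrow_{p,f}x$ if and only if $s\leftrightarrow_{f_{(p)}}x$; and for $\alpha\in L_1[M]$ the ordinary support of the image of $\alpha$ in $\F_p\otimes L_1[M]$ is precisely $\supp_p(\alpha)$. For a subset $E$ of $S$ (resp.\ of $T$) let $\langle E\rangle$ denote the $\F_p$-subspace of $\F_p\otimes L_1[M]$ (resp.\ of $\F_p\otimes L_1[N]$) spanned by the corresponding $u_t$, so that $\dim_{\F_p}\langle E\rangle=|E|$. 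The statement then reduces to showing $f_{(p)}(\langle U\rangle)=\langle V\rangle$ and $f_{(p)}(\langle U_0\rangle)=\langle V_0\rangle$.

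The core step is the inclusion $f_{(p)}(\langle U\rangle)\subseteq\langle V\rangle$; equality, and the two assertions about $U_0$ and $V_0$, then follow by symmetry, using $(f^r)^{-1}=(f^{-1})^r$ and the fact that $s\leftrightarrow_{p,f}x$ forces $x\leftrightarrow_{p,f^{-1}}s$. Let $t\in U$. By Lemma~\ref{lem5_3} there is $y\in T$ with $t\leftrightarrow_{p,f}y$; since $s\prec_M t$, Lemma~\ref{lem5_7} gives $x\prec_N y$, so $y\in V$. For every $y'\in\supp_p(f(u_t))$, Lemma~\ref{lem5_9} applied to $t,y,y'$ gives $y\prec_N y'$, hence $x\prec_N y'$ by transitivity of $\prec_N$ (Lemma~\ref{lem3_1}); thus $y'\in V$. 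So the $\F_p$-support of $f_{(p)}(u_t)$ is contained in $V$, i.e.\ $f_{(p)}(u_t)\in\langle V\rangle$. For $t\in U_0$ the same argument yields $y'\in V$; to upgrade this to $y'\in V_0$ one notes that $x\not\equiv_N y$ by Corollary~\ref{corl5_8} (since $s\not\equiv_M t$), and that if $x\equiv_N y'$ held then the chain $x\prec_N y\prec_N y'\prec_N x$ would force $x\equiv_N y$, a contradiction.

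Finally, from $f_{(p)}(\langle U\rangle)=\langle V\rangle$ and $f_{(p)}(\langle U_0\rangle)=\langle V_0\rangle$ one reads off $|U|=|V|$ and $|U_0|=|V_0|$, whence $|C|=|U|-|U_0|=|V|-|V_0|=|X|$. I do not expect a serious obstacle here: the real work has been front-loaded into the preparatory (inductive) Lemmas~\ref{lem5_7} and~\ref{lem5_9}, and the only point requiring care in the present argument is that $p$-supports are not as robust as honest supports, which is exactly why everything must be phrased inside $\F_p\otimes L[M]$ rather than in $L[M]$ itself.
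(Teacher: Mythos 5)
Your proof is correct, and its skeleton is the same as the paper's: the sets $U,U_0,V,V_0$ attached to a pair $s\leftrightarrow_{p,f}x$, the inclusion-plus-symmetry argument via $(f^r)^{-1}=(f^{-1})^r$, and the count $|C|=|U|-|U_0|=|V|-|V_0|=|X|$. The one genuine difference is where the linear algebra takes place. The paper works with the $\Z$-submodules $\langle U\rangle,\langle U_0\rangle\subset L_1[M]$ and $\langle V\rangle,\langle V_0\rangle\subset L_1[N]$ and compares ranks, asserting $f(\langle U\rangle)\subset\langle V\rangle$ and $f(\langle U_0\rangle)\subset\langle V_0\rangle$ directly ``by Lemma \ref{lem5_7} and Lemma \ref{lem5_9}''; you instead tensor with $\F_p$ and compare dimensions of the corresponding $\F_p$-subspaces. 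Your variant is in fact the more careful one: Lemmas \ref{lem5_7} and \ref{lem5_9} only control $p$-supports, so what they literally yield for $t\in U$ is $\supp_p(f(u_t))\subset V$ (via Lemma \ref{lem5_3}, Lemma \ref{lem5_7}, Lemma \ref{lem5_9} and transitivity of $\prec_N$, exactly as you spell out), not containment of the full integral support; over $\F_p$ the $p$-support is the honest support, so the inclusion $f_{(p)}(\langle U\rangle)\subset\langle V\rangle$ is justified, and since $\dim_{\F_p}\langle E\rangle=|E|$ the count goes through unchanged. You also make explicit two small points the paper leaves implicit (inherited from Lemma \ref{lem4_11}): the transitivity step producing $x\prec_N y'$, and the argument that $x\equiv_N y'$ would force $x\equiv_N y$, contradicting Corollary \ref{corl5_8}, which is what puts $\supp_p(f(u_t))$ inside $V_0$ for $t\in U_0$. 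So: same route, with a mod-$p$ formulation that repairs a minor imprecision in the paper's own write-up.
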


\begin{proof}
The proof is almost identical to the one of Lemma \ref{lem4_11}.
Let $C \in S^r$, $X=f^r(C)$, $s\in C$ and $x \in X$.
Set $U=\{t \in S \mid s \prec_M t \}$, $U_0 = \{t \in S \mid s \prec_M t \text{ and } s \not\equiv_M t \}$, $V=\{y \in T \mid x \prec_N y \}$, $V_0 = \{y \in T \mid x \prec_N y \text{ and } x \not\equiv_N y \}$.
If $E$ is a subset of $S$ (resp. of $T$) we denote by $\langle E \rangle$ the $\Z$-submodule of $L_1[M]$ (resp. of $L_1 [N]$) spanned by $\{u_t \mid t \in E\}$.
We have $C=U \setminus U_0$, hence $\rk(\langle U \rangle) - \rk(\langle U_0 \rangle ) = |C|$.
Similarly, $\rk(\langle V \rangle) - \rk(\langle V_0 \rangle) = |X|$.
On the other hand, by Lemma \ref{lem5_7} and Lemma \ref{lem5_9}, $f(\langle U \rangle) \subset \langle V \rangle$ and $f(\langle U_0 \rangle) \subset \langle V_0 \rangle$.
Similarly, $f^{-1}(\langle V\rangle)\subset\langle U\rangle$ and $f^{-1}(\langle V_0\rangle)\subset\langle U_0\rangle$, hence $f(\langle U\rangle)=\langle V\rangle$ and $f(\langle U_0\rangle)=\langle V_0\rangle$.
So, $|C| = \rk(\langle U \rangle) - \rk(\langle U_0 \rangle) = \rk(\langle V \rangle) - \rk (\langle V_0 \rangle) = |X|$.
\end{proof}

\begin{lem}\label{lem5_11}
Let $C \in S^r$.
Then $\lb (f^r(C)) = \lb (C)$.
\end{lem}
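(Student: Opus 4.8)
The plan is to mimic the proof of Lemma \ref{lem4_12}, using the induction-on-$\rho$ machinery already set up in this section rather than invoking Lemma \ref{lem4_12} directly (though the base case $\rho = 0$ will reduce to it). First I would note that by Lemma \ref{lem5_10} we have $|C| = |f^r(C)| = |X|$, so if $|C| = |X| = 1$ then $\lb(C) = \lb(X) = 0$ and we are done; hence we may assume $|C| = |X| \ge 2$. The key combinatorial observation, exactly as in Lemma \ref{lem4_12}, is that there always exist $s, t \in C$ with $s \neq t$ and $x, y \in X$ with $x \neq y$ such that $s \leftrightarrow_{p,f} x$ and $t \leftrightarrow_{p,f} y$: either some single $x \in X$ satisfies $s \leftrightarrow_{p,f} x$ for every $s \in C$, in which case we pick any $y \in X \setminus \{x\}$, use Lemma \ref{lem5_3} to get $t \in S$ with $t \leftrightarrow_{p,f} y$, note $t \in C$ by Corollary \ref{corl5_8}, and then choose $s \in C \setminus \{t\}$; or else no such uniform $x$ exists and one directly finds $s \neq t$ in $C$ and $x \neq y$ in $X$ with $s \leftrightarrow_{p,f} x$, $t \leftrightarrow_{p,f} y$.

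Once such $s, t, x, y$ are fixed, the conclusion follows immediately from Lemma \ref{lem5_6}: since $s, t \in C$ are distinct and $C$ is an equivalence class of $\equiv_M$ not reduced to a point, Lemma \ref{lem3_2} gives $\lb(C) = m_{s,t}$; likewise $\lb(X) = n_{x,y}$; and Lemma \ref{lem5_6} yields $m_{s,t} = n_{x,y}$. Hence $\lb(f^r(C)) = \lb(X) = n_{x,y} = m_{s,t} = \lb(C)$.

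I do not expect any real obstacle here; this lemma is the $\EE(1,d)$-analogue of Lemma \ref{lem4_12}, and all the substantive work — the inductive descent through the Coxeter matrices $M^{(d)}$ and $M_{(p)}$ — has already been absorbed into Lemma \ref{lem5_6}. The only point requiring a little care is the dichotomy argument producing distinct vertices with the $\leftrightarrow_{p,f}$ correspondence; this is purely the same counting as in Lemma \ref{lem4_12} and uses Lemma \ref{lem5_3} and Corollary \ref{corl5_8} in place of Lemma \ref{lem4_6} and Corollary \ref{corl4_9}. After this lemma, combining the isomorphism $f^r : M^r \to N^r$ with Lemmas \ref{lem5_10} and \ref{lem5_11} and Proposition \ref{prop3_4} will complete the proof of Theorem \ref{thm5_1}.
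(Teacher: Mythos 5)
Your proof is correct and follows essentially the same route as the paper: reduce to $|C|=|X|\ge 2$ via Lemma \ref{lem5_10}, produce distinct $s,t\in C$ and $x,y\in X$ with $s\leftrightarrow_{p,f}x$ and $t\leftrightarrow_{p,f}y$ by the same dichotomy (using Lemma \ref{lem5_3} and Corollary \ref{corl5_8}), and conclude with Lemma \ref{lem5_6}. No issues.
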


\begin{proof}
We argue in the same way as in the proof of Lemma \ref{lem4_12}.
Let $C \in S^r$, and let $X = f^r (C)$.
Note that, by Lemma \ref{lem5_10}, $|C| = |X|$.
If $|C| = |X| = 1$, then $\lb (C) = \lb(X) = 0$.
So, we can assume that $|C| = |X| \ge 2$.
Suppose there exists $x \in X$ such that $s \leftrightarrow_{p,f} x$ for all $s \in C$.
Let $y \in X$, $y \neq x$.
By Lemma \ref{lem5_3} there exists $t \in S$ such that $t \leftrightarrow_{p,f} y$ and, by Corollary \ref{corl5_8}, $t \in C$.
Take $s \in C$ such that $s \neq t$.
Then $s \neq t$, $x \neq y$, $s \leftrightarrow_{p,f} x$ and $t \leftrightarrow_{p,f} y$.
If there is no $x \in X$ such that $s \leftrightarrow_{p,f} x$ for all $s \in C$, then, obviously, there exist $s,t \in C$, $s \neq t$, and $x,y \in X$, $x \neq y$, such that $s \leftrightarrow_{p,f} x$ and $t \leftrightarrow_{p,f} y$. 
So, there always exist $s,t \in C$, $s \neq t$, and $x,y \in X$, $x \neq y$, such that $s \leftrightarrow_{p,f} x$ and $t \leftrightarrow_{p,f} y$.
Then, by Lemma \ref{lem5_6}, $\lb (C) = m_{s,t} = n_{x,y} = \lb (X)$.
\end{proof}

\begin{proof}[Proof of Theorem \ref{thm5_1}]
Let $f : L [M] \to L [N]$ be an isomorphism.
Then, by the above, there exists an isomorphism $f^r : M^r \to N^r$ such that $|f^r(C)| = |C|$ and $\lb(f^r(C)) = \lb(C)$ for all $C \in S^r$.
We conclude by Proposition \ref{prop3_4} that $M$ and $N$ are isomorphic. 
\end{proof}

We turn now to the general case. 

\begin{thm}\label{thm5_12}
Let $c,d \in \N$ such that $c \ge 1$, $d \ge 2$ and $\gcd (c,d)=1$, and let $M,N \in \EE (c,d)$.
If $L[M]$ is isomorphic to $L[N]$, then $M$ is isomorphic to $N$.
\end{thm}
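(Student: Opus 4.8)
The plan is to reduce Theorem~\ref{thm5_12} to the case $c=1$, which is exactly Theorem~\ref{thm5_1}; so assume from now on that $c\ge 2$. Since $\gcd(c,d)=1$ we have $c\ne d^r$ for every $r\ge 1$, hence the values $2c$ and $2d^r$ $(r\ge1)$ are pairwise distinct and, in particular, no off-diagonal entry of a matrix in $\EE(c,d)$ equals $2$.

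First I would attach to each $M\in\EE(c,d)$ the Coxeter matrix $M_d=((m_d)_{s,t})_{s,t\in S}$ obtained by replacing every entry $2c$ by $2$ and leaving every entry $2d^r$ and every entry $\infty$ unchanged. Then $M_d\in\EE(1,d)$, and the map sending a value of $M$ to the corresponding value of $M_d$ is a bijection between $\{1,2c,\infty\}\cup\{2d^r\mid r\ge1\}$ and $\{1,2,\infty\}\cup\{2d^r\mid r\ge1\}$ (it interchanges $2c$ and $2$ and fixes everything else). Consequently any isomorphism of Coxeter matrices $M_d\to N_d$ preserves $M$- and $N$-labels as well, hence is automatically an isomorphism $M\to N$. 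So it suffices to manufacture an isomorphism $L[M_d]\simeq L[N_d]$ out of a given isomorphism $L[M]\simeq L[N]$.

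The key point is that $L[M_d]$ is recovered from $L[M]$ by a construction depending only on the Lie algebra structure and on the fixed integer $c$. Since no off-diagonal entry of $M$ is $2$, every $v_{s,t}$ lies in the image of the bracket, so $[L[M],L[M]]=L_2[M]$. Let $T_c$ denote the subgroup of $L_2[M]$ of elements annihilated by a power of the product of the primes dividing $c$. Because $\gcd(c,d)=1$, the summands $(\Z/d^r\Z)v_{s,t}$ and $\Z\,v_{s,t}$ of $L_2[M]$ contribute nothing to $T_c$, whence $T_c=\bigoplus_{(s,t)\in E_{2c}(M)}(\Z/c\Z)v_{s,t}$. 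As $L_2[M]$ is central in $L[M]$, $T_c$ is an ideal, and a direct inspection gives $L[M]/T_c\simeq L[M_d]$: the degree-one part $L_1[M]$ is unchanged, and killing the $2c$-summands of $L_2[M]$ turns the corresponding brackets $[u_s,u_t]=v_{s,t}$ into $0$, i.e.\ turns those entries into $2$. Since $T_c$ is intrinsic, any Lie algebra isomorphism $f:L[M]\to L[N]$ satisfies $f([L[M],L[M]])=[L[N],L[N]]$ and $f(T_c(L_2[M]))=T_c(L_2[N])$, and therefore descends to an isomorphism $L[M_d]\simeq L[M]/T_c\to L[N]/T_c\simeq L[N_d]$.

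Finally I would invoke Theorem~\ref{thm5_1}: since $M_d,N_d\in\EE(1,d)$ and $L[M_d]\simeq L[N_d]$, we get $M_d\simeq N_d$, and hence $M\simeq N$ by the second paragraph. The only genuinely delicate step is the identification $T_c(L_2[M])=\bigoplus_{(s,t)\in E_{2c}(M)}(\Z/c\Z)v_{s,t}$ together with the resulting isomorphism $L[M]/T_c\simeq L[M_d]$; once this is in place everything is formal, and there is no need to redevelop the machinery of Sections~\ref{sec3} and~\ref{sec4} for general $c$.
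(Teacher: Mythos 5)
Your proposal is correct and follows essentially the same route as the paper: both reduce to Theorem \ref{thm5_1} by replacing every entry $2c$ of $M$ and $N$ by $2$ (your $M_d,N_d$ are the paper's $M^{(c)},N^{(c)}$), transporting the given isomorphism to the modified Lie algebras, and then noting that an isomorphism of the modified matrices is automatically one of $M$ and $N$. The only difference is cosmetic: the paper realizes $L[M^{(c)}]$ as the scaled algebra $L^{(c)}[M]$ with degree-two part $c\,L_2[M]$ and bracket $c[\cdot,\cdot]$, whereas you realize it as the quotient of $L[M]$ by the $c$-primary torsion $T_c$ of the derived subalgebra, and multiplication by $c$ identifies $L_2[M]/T_c$ with $c\,L_2[M]$, so the two constructions coincide.
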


The following corollary can be proved from Theorem \ref{thm5_12} in the same way as Corollary \ref{corl5_2} is proved from Theorem \ref{thm5_1}.

\begin{corl}\label{corl5_13}
Let $c,d \in \N$ such that $c \ge 1$, $d \ge 2$ and $\gcd (c,d)=1$.
Then $\AA [\EE (c,d)]$ is rigid.
\end{corl}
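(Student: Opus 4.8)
The plan is to reduce Theorem~\ref{thm5_12} to the case $c=1$, which is Theorem~\ref{thm5_1}. If $c=1$ there is nothing to prove, so assume $c\ge 2$; note that then $c$ is coprime to every power $d^r$ with $r\ge 1$. Given $M=(m_{s,t})_{s,t\in S}\in\EE(c,d)$, I form two Coxeter matrices on the same vertex set $S$: the matrix $M_{[d]}$, obtained from $M$ by replacing each entry equal to $2c$ by $2$ and leaving all other entries unchanged, and the matrix $M_{[c]}$, obtained by replacing each entry of the form $2d^r$ ($r\ge 1$) by $2$ and leaving all other entries unchanged. Then $M_{[d]}\in\EE(1,d)$, while $M_{[c]}\in\EE(1,c)$ since $\{2,2c,\infty\}\subseteq\{2,\infty\}\cup\{2c^r\mid r\ge 1\}$. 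The matrix $M$ is recovered from the pair $(M_{[c]},M_{[d]})$: for $s\ne t$ one has $m_{s,t}=2c$ if $M_{[c]}$ records $2c$, $m_{s,t}=2d^r$ if $M_{[d]}$ records $2d^r$, $m_{s,t}=\infty$ if both record $\infty$, and $m_{s,t}=2$ otherwise. Define $N_{[c]}$ and $N_{[d]}$ from $N$ in the same way.

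The first step is to pass these reductions through the Lie algebra. Since $L_2[M]=[L[M],L[M]]$, any isomorphism $f\colon L[M]\to L[N]$ carries $L_2[M]$ onto $L_2[N]$, hence carries torsion subgroups onto torsion subgroups. Because $\gcd(c,d)=1$, the subgroup of elements of $L_2[M]$ annihilated by some power of $c$ is precisely $\bigoplus_{(s,t)\in E_{2c}(M)}(\Z/c\Z)\,v_{s,t}$; it is an ideal of $L[M]$, and the quotient of $L[M]$ by it is isomorphic to $L[M_{[d]}]$. Likewise, the quotient of $L[M]$ by the subgroup of elements annihilated by some power of $d$ is isomorphic to $L[M_{[c]}]$. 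Hence $f$ induces isomorphisms $L[M_{[d]}]\simeq L[N_{[d]}]$ and $L[M_{[c]}]\simeq L[N_{[c]}]$, so Theorem~\ref{thm5_1} (applied once as stated, once with $d$ replaced by $c$) gives isomorphisms of Coxeter matrices $M_{[d]}\simeq N_{[d]}$ and $M_{[c]}\simeq N_{[c]}$. Furthermore, both quotients share the degree-one part $L_1[M]$ with $L[M]$, and the maps they induce on it are the one induced by $f$; tracing through the proof of Theorem~\ref{thm5_1}, the isomorphisms $M_{[d]}\simeq N_{[d]}$ and $M_{[c]}\simeq N_{[c]}$ are realized by bijections $f^r_{[d]}\colon S^r_{[d]}\to T^r_{[d]}$ and $f^r_{[c]}\colon S^r_{[c]}\to T^r_{[c]}$ of the sets of equivalence classes that preserve $|\cdot|$ and $\lb$ and depend only on this common degree-one map (through the relations $\leftrightarrow_{p,f}$ with $p\mid d$, resp.\ $\leftrightarrow_{q,f}$ with $q\mid c$).

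The second step is to glue the two isomorphisms. A short verification with the quasi-order $\prec_M$ (it suffices to examine the values $m$ that yield links in $\EE(c,d)$, namely those of the form $2d^r$ and $2cd^r$) shows that $\prec_M=\prec_{M_{[c]}}\cap\prec_{M_{[d]}}$, hence $\equiv_M=\equiv_{M_{[c]}}\cap\equiv_{M_{[d]}}$; consequently each class $C\in S^r$ is uniquely $C^{[c]}\cap C^{[d]}$ with $C^{[c]}\in S^r_{[c]}$ and $C^{[d]}\in S^r_{[d]}$, and the same holds in $T$. I then define $\phi\colon S^r\to T^r$ by $\phi(C^{[c]}\cap C^{[d]})=f^r_{[c]}(C^{[c]})\cap f^r_{[d]}(C^{[d]})$ and check that $\phi$ is a well-defined bijection with $m^r_{C,D}=n^r_{\phi(C),\phi(D)}$, $|\phi(C)|=|C|$, and $\lb(\phi(C))=\lb(C)$; the entries $m^r$ and the labels $\lb$ come out right precisely because $M$ and $N$ are reconstructed from $(M_{[c]},M_{[d]})$ and $(N_{[c]},N_{[d]})$. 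Proposition~\ref{prop3_4} then yields $M\simeq N$; combined with Theorem~\ref{thm2_1}, this gives Corollary~\ref{corl5_13}.

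The main obstacle is the well-definedness of $\phi$. A priori $f^r_{[c]}$ and $f^r_{[d]}$ are unrelated bijections, and one must show that $f^r_{[c]}(C^{[c]})$ and $f^r_{[d]}(C^{[d]})$ meet in $T$ exactly when $C^{[c]}$ and $C^{[d]}$ meet in $S$, and then in a set of the same size. Unwinding the construction of the $f^r$, this comes down to proving that, for each vertex $s$, a ``$p$-partner'' and a ``$q$-partner'' of $s$ lie in equivalence classes of $N$ that $f^r_{[c]}$ and $f^r_{[d]}$ match consistently --- that is, that the mod-$p$ and mod-$q$ data carried by $f$ are compatible. Establishing this is the heart of the argument, and it is where $\gcd(c,d)=1$ is used in an essential way: it is this hypothesis that allows the $c$-torsion and the $d$-torsion of $L_2[M]$ to be separated independently and that makes $M$ recoverable from $M_{[c]}$ and $M_{[d]}$.
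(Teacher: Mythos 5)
Your first step (passing from $f\colon L[M]\to L[N]$ to an induced isomorphism $L[M_{[d]}]\simeq L[N_{[d]}]$ by quotienting $L_2$ by its $c$-power torsion, then invoking Theorem~\ref{thm5_1}) is sound and is essentially the paper's reduction for Theorem~\ref{thm5_12}, only phrased via a quotient instead of the paper's rescaled algebra $L^{(c)}[M]$ with $L_2^{(c)}[M]=c\,L_2[M]$ and bracket $c[\cdot,\cdot]$. But from there your argument has a genuine gap: the whole second branch ($M_{[c]}$, the second application of Theorem~\ref{thm5_1} with $d$ replaced by $c$, and the gluing map $\phi$ on equivalence classes) is the part you yourself call ``the heart of the argument,'' and you never establish it. The well-definedness of $\phi$ requires showing that the class-level bijections $f^r_{[c]}$ and $f^r_{[d]}$, which come from unrelated data (partners modulo a prime $q\mid c$ versus a prime $p\mid d$), match up class by class; as written this compatibility is only described, not proved, and it is not an obvious consequence of anything earlier in the paper. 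So the proposal, taken as a proof, is incomplete exactly at the step it identifies as central.

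The good news is that the missing step is unnecessary. For $c\ge 2$ (the only case you treat, since $c=1$ is Theorem~\ref{thm5_1} itself) the set $\{2c,\infty\}\cup\{2d^r\mid r\ge 1\}$ does not contain $2$, because $2c\ge 4$ and $2d^r\ge 4$; hence no off-diagonal entry of $M\in\EE(c,d)$ equals $2$, and $M$ is recovered from $M_{[d]}$ alone by replacing every entry $2$ with $2c$ (your ``2 otherwise'' case never occurs). Consequently any bijection $S\to T$ realizing $M_{[d]}\simeq N_{[d]}$ already realizes $M\simeq N$, and Corollary~\ref{corl5_13} follows from your first step together with Theorem~\ref{thm2_1}, with no need for $M_{[c]}$, for $\prec_M=\prec_{M_{[c]}}\cap\prec_{M_{[d]}}$, or for Proposition~\ref{prop3_4} beyond its use inside Theorem~\ref{thm5_1}. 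This single-reduction route is exactly how the paper proves Theorem~\ref{thm5_12}; the coprimality hypothesis enters only to ensure that killing the $c$-torsion leaves the $\Z/d^r$ summands intact (equivalently, that $L^{(c)}[M]\simeq L[M^{(c)}]$), not to separate and then recombine two independent reductions.
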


\begin{proof}[Proof of Theorem \ref{thm5_12}]
Let $f : L[M] \to L[N]$ be an isomorphism.
We define a graded Lie $\Z$-algebra $L^{(c)} [M]$ as follows. 
We set $L^{(c)}_1[M] = L_1 [M]$, $L^{(c)}_2[M] = c\,L_2[M]$, and $L^{(c)} [M] = L^{(c)}_1[M] \oplus L^{(c)}_2 [M]$.
We define the Lie bracket $[\cdot ,\cdot ]^{(c)}$ of $L^{(c)} [M]$ as follows. 
For $\alpha, \beta \in L_1^{(c)}[M]$ we set $[\alpha,\beta]^{(c)} = c\,[\alpha,\beta]$, where $[\cdot,\cdot]$ denotes the Lie bracket of $L [M]$.
On the other hand, we set $[\alpha, \beta]^{(c)}=0$ for all $(\alpha, \beta) \in L_2^{(c)}[M] \times L^{(c)}[M]$.
We define $L^{(c)}[N]$ in the same way.
It is easily seen that $f$ induces an isomorphism $f^{(c)} : L^{(c)}[M] \to L^{(c)}[N]$.

Let $M^{(c)}=(m_{s,t}^{(c)})_{s,t \in S}$ be the Coxeter matrix defined as follows.
\begin{itemize}
\item
$m_{s,t}^{(c)}= 2$ if $m_{s,t}=2c$.
\item
$m_{s,t}^{(c)}= m_{s,t}$ if $m_{s,t} \neq 2c$. 
\end{itemize}
We define $N^{(c)}$ in the same way. 
It is clear that $M^{(c)}$ and $N^{(c)}$ are both even Coxeter matrices lying in $\EE(1,d)$, that $M$ is isomorphic to $N$ if and only if $M^{(c)}$ is isomorphic to $N^{(c)}$, that $L^{(c)}[M] \simeq L[M^{(c)}]$, and that $L^{(c)}[N] \simeq L [N^{(c)}]$.
By Theorem \ref{thm5_1}, $M^{(c)}$ and $N^{(c)}$ are isomorphic, hence $M$ and $N$ are isomorphic. 
\end{proof}


\section{An example}\label{sec6}

Consider the following even Coxeter matrices.
\[
M_0=\left( \begin{array}{ccc}
1 & 6 & 2 \\
6 & 1 & 10 \\
2 & 10 & 1
\end{array} \right)\,,\
N_0 = \left( \begin{array}{ccc}
1 & 2 & 2 \\
2 & 1 & 30\\
2 & 30 & 1
\end{array} \right)\,.
\]
It is obvious that $M_0$ and $N_0$ are not isomorphic. 
Furthermore, we also have the following.

\begin{lem}\label{lem6_1}
The Artin groups $A[M_0]$ and $A[N_0]$ are not isomorphic.
\end{lem}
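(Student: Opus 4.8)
The plan is to distinguish $A[M_0]$ and $A[N_0]$ by an invariant that survives passage to a quotient where we can compute, since $\TGr$ does not see them apart (that is the whole point of this example, as confirmed by $L[M_0]\simeq L[N_0]$ via the criterion from Sections~\ref{sec3}--\ref{sec4}). The natural candidate is abelianization of a suitable quotient: even Artin groups surject onto the associated even Coxeter group, and more usefully onto ``intermediate'' quotients obtained by imposing $s^2=1$ only on \emph{some} generators, or by imposing relations of the form $s^k=1$. Concretely, I would consider, for a prime $q$, the quotient $G/\langle\langle s^q \mid s\in S\rangle\rangle$; for the three-generator even Artin group this is a one-relator-type quotient whose abelianization and low-index structure can be read off from the single exponent matrix, and the exponents $6,10,2$ versus $2,2,30$ interact differently with reduction mod $q$ for $q\in\{2,3,5\}$.

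First I would set up notation: write $S=\{a,b,c\}$ and recall $A[M_0]=\langle a,b,c\mid (ab)^3=(ba)^3,\ (bc)^5=(cb)^5,\ ac=ca\rangle$ and $A[N_0]=\langle a,b,c\mid ab=ba,\ (bc)^{15}=(cb)^{15},\ ac=ca\rangle$. Next I would choose the invariant. A clean choice: for each prime $q$, form $H_q(G)=G/\langle\langle g^q\rangle\rangle$ (equivalently the largest quotient of exponent dividing $q$ in a suitable sense — more precisely I would work with $G_q := G/[G,G]G^q$ refined by the Coxeter-type relations, or simply analyze the pro-$q$ completion), and compare the resulting finite or finitely-presented groups. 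Even more elementary: the quotient $\bar G = G/\langle\langle s^2\rangle\rangle$ is the even Coxeter group $W[M_0]$ resp.\ $W[N_0]$, and by Bahls' rigidity of even Coxeter groups \cite{Bahls1} these are non-isomorphic; but that alone does \emph{not} imply $A[M_0]\not\simeq A[N_0]$, because the kernel $\langle\langle s^2\rangle\rangle$ is not characteristic. So the genuine content is to produce a \emph{characteristically defined} quotient that separates them. I would use: kill the second derived subgroup and all squares, i.e.\ pass to $G/[[G,G],[G,G]]\,G^{2}$-type quotients, or — cleanest — exhibit an explicit finite simple (or near-simple) quotient of one that the other does not admit, by mapping generators to specific permutations or matrices over $\F_q$ compatible with the braid-type relations $(st)^m=(ts)^m$, whose solvability depends on the parity/divisibility of $m$.

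The key steps, in order: (1) reduce to the Coxeter-type quotient $W_q[M]:=\langle S\mid \Pi(s,t,m_{s,t})=\Pi(t,s,m_{s,t}),\ s^q=1\rangle$ for primes $q=2,3,5$; (2) observe $W_q[M]$ is canonically a quotient of $A[M]$ by a \emph{verbal} (hence characteristic, hence isomorphism-invariant) subgroup, so $A[M_0]\simeq A[N_0]$ would force $W_q[M_0]\simeq W_q[N_0]$ for all $q$; (3) compute these groups — for three generators with one commuting pair they are small enough (amalgams of dihedral-type groups over a common vertex) that order, abelianization, or the isomorphism type of a Sylow subgroup can be read off directly from $m_{a,b},m_{b,c}$; (4) show that for at least one $q$ the two sides differ, using that $\gcd$ of $q$ with the $m$'s governs whether the relation collapses to commutativity or survives — e.g.\ for $q=3$ the relation $(ab)^3=(ba)^3$ in $M_0$ behaves differently from the commuting relation $ab=ba$ in $N_0$, producing non-isomorphic $W_3$'s of different orders or abelianizations; (5) conclude. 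The main obstacle I anticipate is step~(3)/(4): making the verbal quotient genuinely characteristic while keeping it computable. The relation $s^q=1$ is not itself verbal, so I would instead take the quotient by the verbal subgroup generated by all values of the word $w(x_1,\dots,x_n)$ for a well-chosen $w$ (for instance $w=[x,y]^{k}$ or a word adapted to the exponents $3$ and $5$), and the delicate point is to verify that this verbal quotient of $A[M_0]$ and of $A[N_0]$ really are non-isomorphic rather than accidentally coinciding — which is exactly where the arithmetic of $\{6,10,2\}$ versus $\{2,2,30\}$, and the coprimality built into the $\EE(c,d)$ framework, must be invoked. An alternative route avoiding verbal subtleties entirely: compute the homology $H_2(A[M];\Z)$ or the Schur multiplier via the presentation 2-complex, which \emph{is} an isomorphism invariant and is directly computable from the relation words $(st)^m=(ts)^m$ as a sum of cyclic groups whose orders depend on the $m_{s,t}$; I would in fact try this first, as it sidesteps all quotient-characteristicity issues and reduces the whole lemma to a finite linear-algebra computation over $\Z$ with the two given $3\times 3$ matrices.
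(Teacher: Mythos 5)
Your proposal is a plan rather than a proof, and both of the concrete routes it offers break down. The pivotal claim in your step (2) -- that the quotient $W_q[M]$ obtained by imposing $s^q=1$ is the quotient of $A[M]$ by a \emph{verbal} (hence characteristic) subgroup -- is false: the normal closure of $\{s^q\mid s\in S\}$ involves only the $q$-th powers of the \emph{standard} generators, a presentation-dependent choice, so an abstract isomorphism $A[M_0]\simeq A[N_0]$ need not descend to these quotients. You notice this yourself for $q=2$ (where the quotient is the Coxeter group and Bahls' rigidity does not suffice), but the same objection applies verbatim for every $q$, so steps (3)--(5) never get off the ground. The genuinely characteristic replacements you gesture at (Burnside-type quotients $G/\langle\langle g^q\mid g\in G\rangle\rangle$, quotients by values of a word $w$, pro-$q$ completions) are never computed, and the only easy case gives nothing: killing all squares yields $(\Z/2)^3$ for both groups.

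The fallback you say you would try first -- the Schur multiplier computed from the presentation $2$-complex -- provably cannot work. Since the first generator of $A[N_0]$ commutes with the other two, $A[N_0]\cong\Z\times A'$, where $A'=\langle b,c\mid (bc)^{15}=(cb)^{15}\rangle$ is the dihedral Artin group of type $I_2(30)$ with $H_1(A')\cong\Z^2$ and $H_2(A')\cong\Z$; the K\"unneth formula then gives $H_2(A[N_0];\Z)\cong\Z^3$. On the other side, $A[M_0]$ is a two-dimensional Artin group, its presentation (Salvetti) complex is $2$-dimensional and aspherical by Charney--Davis \cite{ChaDav1}, all three relators have zero exponent sums, and the cellular boundary maps vanish, so $H_2(A[M_0];\Z)\cong\Z^3$ as well. (Note also that in general $H_2$ of a presentation $2$-complex only surjects onto $H_2$ of the group; for $A[N_0]$ that complex is not aspherical, since the group has cohomological dimension $3$.) Thus $H_1$ and $H_2$ agree for the two groups, and the separating invariant lives one degree higher: the paper's proof simply observes that $A[M_0]$ has cohomological dimension $2$ (it is two-dimensional in the sense of Charney--Davis), while $A[N_0]$ is of spherical type of rank $3$ and has cohomological dimension $3$ by \cite{ChaDav2}; equivalently $H_3(A[N_0];\Z)\cong\Z\neq 0=H_3(A[M_0];\Z)$. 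Your proposal never reaches an invariant of this depth, so the lemma remains unproved as written.
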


\begin{proof}
The group $A[M_0]$ is a two-dimensional Artin group in the sense of Charney--Davis \cite{ChaDav1}, hence, by Charney--Davis \cite{ChaDav1} \cite[Corollary 1.4.2]{ChaDav2}, the cohomological dimension of $A[M_0]$ is $2$.
On the other hand, $A[N_0]$ is an Artin group of spherical type of rank $3$, hence, by Charney--Davis \cite[Corollary 1.4.2]{ChaDav2}, the cohomological dimension of $A[N_0]$ is $3$. 
So, $A[M_0]$ and $A [N_0]$ are not isomorphic. 
\end{proof}

However, we have the following.

\begin{prop}\label{lem6_2}
The Lie algebras $L[M_0]$ and $L[N_0]$ are isomorphic.
\end{prop}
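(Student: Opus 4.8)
The plan is to exhibit an explicit isomorphism of graded Lie $\Z$-algebras $\phi : L[M_0] \to L[N_0]$. First I would spell out both algebras via the recipe of Theorem \ref{thm2_1}. Write $S = \{s_1, s_2, s_3\}$ for the index set of $M_0$, with $m_{s_1,s_2} = 6$, $m_{s_1,s_3} = 2$, $m_{s_2,s_3} = 10$; then $L_1[M_0] = \Z u_{s_1} \oplus \Z u_{s_2} \oplus \Z u_{s_3}$ and $L_2[M_0] = (\Z/3\Z)\, v_{s_1,s_2} \oplus (\Z/5\Z)\, v_{s_2,s_3}$, with $[u_{s_1},u_{s_2}] = v_{s_1,s_2}$, $[u_{s_2},u_{s_3}] = v_{s_2,s_3}$, $[u_{s_1},u_{s_3}] = 0$, and the bracket zero as soon as one argument lies in $L_2[M_0]$. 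Likewise, writing $T = \{t_1, t_2, t_3\}$ for the index set of $N_0$ (so $n_{t_1,t_2} = n_{t_1,t_3} = 2$ and $n_{t_2,t_3} = 30$), one gets $L_1[N_0] = \Z u_{t_1} \oplus \Z u_{t_2} \oplus \Z u_{t_3}$ and $L_2[N_0] = (\Z/15\Z)\, v_{t_2,t_3}$, the only nonzero bracket among $u_{t_1},u_{t_2},u_{t_3}$ being $[u_{t_2},u_{t_3}] = v_{t_2,t_3}$; in particular $u_{t_1}$ is central in $L[N_0]$, and $L_2[M_0] \simeq L_2[N_0]$ abstractly by the Chinese Remainder Theorem.

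Next I would define $\phi$ on degree-one generators by
\[
\phi(u_{s_1}) = 2 u_{t_1} + 5 u_{t_2}\,,\qquad \phi(u_{s_2}) = u_{t_2} + u_{t_3}\,,\qquad \phi(u_{s_3}) = -u_{t_1} + 3 u_{t_3}\,,
\]
and on degree two by $\phi(v_{s_1,s_2}) = 5 v_{t_2,t_3}$ and $\phi(v_{s_2,s_3}) = 3 v_{t_2,t_3}$. The verification then reduces to routine checks. (i) The integer matrix with rows $(2,5,0)$, $(0,1,1)$, $(-1,0,3)$ has determinant $1$, so $\phi$ restricts to an isomorphism $L_1[M_0] \to L_1[N_0]$. (ii) Using that $u_{t_1}$ is central, a direct computation gives $[\phi(u_{s_1}),\phi(u_{s_2})] = 5 v_{t_2,t_3} = \phi([u_{s_1},u_{s_2}])$, $[\phi(u_{s_2}),\phi(u_{s_3})] = 3 v_{t_2,t_3} = \phi([u_{s_2},u_{s_3}])$, and $[\phi(u_{s_1}),\phi(u_{s_3})] = 15 v_{t_2,t_3} = 0 = \phi([u_{s_1},u_{s_3}])$; since all brackets with an argument in $L_2$ vanish on both sides, $\phi$ is a Lie algebra homomorphism. (iii) $\phi|_{L_2[M_0]}$ is well defined because $3 \cdot (5 v_{t_2,t_3}) = 5 \cdot (3 v_{t_2,t_3}) = 0$ in $\Z/15\Z$, and it is bijective because $5 v_{t_2,t_3}$ and $3 v_{t_2,t_3}$ generate $\Z/15\Z$ (as $\gcd(3,5) = 1$) while $|L_2[M_0]| = |L_2[N_0]| = 15$. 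Combining (i) and (iii) shows $\phi$ is bijective, and with (ii) it is the desired isomorphism.

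The only step with genuine content is producing the map in the first display. The torsion dictates the constraints: the coefficient of $v_{t_2,t_3}$ in $[\phi(u_{s_i}),\phi(u_{s_j})]$ is exactly the $2\times 2$ minor of the above matrix on rows $i,j$ and columns $2,3$, and in order that $\phi$ send $v_{s_1,s_2}$ (order $3$), $v_{s_2,s_3}$ (order $5$) and $[u_{s_1},u_{s_3}] = 0$ to elements of the correct order, that minor must be divisible by $5$, by $3$ and by $15$ respectively, all while the matrix stays unimodular and the two degree-two images still generate $\Z/15\Z$. This is a small Diophantine puzzle; the matrix above is found by making the first row divisible by $5$ and the third row divisible by $3$ in the last two columns, and then fixing the first column so the determinant is $\pm 1$. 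Once such a matrix is in hand the rest is mechanical, so I expect no further obstacle.
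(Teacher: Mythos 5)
Your proof is correct and follows essentially the same route as the paper: an explicit graded isomorphism given by a unimodular integer matrix on the degree-one part together with a compatible map $(\Z/3)\oplus(\Z/5)\to\Z/15$ on the degree-two part, with the bracket compatibility checked via the relevant $2\times 2$ minors. Only the specific matrices differ from those in the paper, and your verifications (determinant $1$, minors $5$, $3$, $15$, well-definedness and surjectivity in degree two) all check out.
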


\begin{proof}
We denote by $\UU = \{u_1, u_2, u_3\}$ the standard $\Z$-basis of $L_1[M_0]$ and by $\UU'=\{u_1',u_2',u_3'\}$ the standard $\Z$-basis of $L_1[N_0]$.
On the other hand, we denote by $\VV=\{v_{1,2},v_{2,3}\}$ the standard generating set of $L_2[M_0]$ and by $\VV'=\{v_{2,3}'\}$ the standard generating set of $L_2[N_0]$. 
Let $f_1:L_1[M_0]\to L_1[N_0]$ and $g_1:L_1[N_0]\to L_1[M_0]$ be the $\Z$-linear maps whose matrices relative to the basis $\UU$ and $\UU'$ are $F$ and $G$, respectively, where 
\[
F = \left( \begin{array}{ccc}
2 & 0 & 1 \\
0 & 1 & 0 \\
5 & 0 & 3
\end{array} \right)\,,\
G=\left( \begin{array}{ccc}
3 & 0 & -1 \\
0 & 1 & 0 \\
-5 & 0 & 2
\end{array} \right)\,.
\]
A direct calculation shows that $f_1\circ g_1=\id$ and $g_1\circ f_1=\id$, hence $f_1$ is an isomorphism of $\Z$-modules and $g_1$ is the inverse of $f_1$.
We define $f_2:L_2[M_0]\to L_2[N_0]$ by $f_2(v_{1,2})=-5v_{2,3}'$ and $f_2(v_{2,3}) = 3v_{2,3}'$.
It is easily checked that $f_2$ is well-defined, it is invertible, and its inverse $g_2:L_2[N_0]\to L_2[M_0]$ is defined by $g_2(v_{2,3}')=v_{1,2}+2v_{2,3}$.
We check with a direct calculation that $[f_1(u_i),f_1(u_j)]=f_2([u_i,u_j])$ for all $i,j\in\{1,2,3\}$, hence $f_1$ and $f_2$ induce an isomorphism $f:L[M_0]\to L[N_0]$ of Lie $\Z$-algebras. 
\end{proof}



\end{document}